\newtheorem{thm}{Theorem}[section]
\newtheorem{prop}[thm]{Proposition}
\newtheorem{lem}[thm]{Lemma}
\numberwithin{equation}{section}
\def\N{{\Bbb N}}
\def\Z{{\Bbb Z}}
\def\Q{{\Bbb Q}}
\def\R{{\Bbb R}}
\def\C{{\Bbb C}}
\def\A{{\Bbb A}}
\def\emp{\varnothing}
\def\fa{{\frak a}}
\def\fn{{\frak n}}
\def\fF{{\frak F}}
\def\fH{{\frak H}}
\def\fE{{\frak E}}
\def\fX{{\frak X}}
\def\fP{{\frak P}}
\def\cO{\frak o}
\def\cT{{\mathcal T}}
\def\cW{{\mathcal W}}
\def\GL{{\operatorname {GL}}}
\def\SL{{\operatorname{SL}}}
\def\SO{{\operatorname{SO}}}
\def\PSL{{\operatorname {PSL}}}
\def\Re{{\operatorname {Re}}}
\def\Im{{\operatorname {Im}}}
\def\nr{{\operatorname{N}}}
\def\Mat{{\operatorname{M}}}
\def\diag{{\operatorname {diag}}}
\def\sgn{{\operatorname {sgn}}}
\def\Ad{{\operatorname{Ad}}}
\def\vol{{\operatorname{vol}}}
\def\leq{\leqslant}
\def\geq{\geqslant}
\def\bsl{\backslash}
\def\le{\leq}
\def\ge{\geq}
\def\d {{{d}}}
\def\JJ{{\Bbb J}}
\def\bK{{\bold K}}
\def\1{{\bold 1}}
\renewcommand{\a}{\alpha}
\renewcommand{\b}{\beta}
\newcommand{\e}{\epsilon}
\renewcommand{\l}{\lambda}
\newcommand{\go}{{\mathfrak{o}}}
\newcommand{\Dcal}{{\mathcal D}}
\newcommand{\Fcal}{{\mathcal F}}
\newcommand{\Ocal}{{\mathcal O}}
\newcommand{\Pcal}{{\mathcal P}}
\newcommand{\Qcal}{{\mathcal Q}}
\newcommand{\Scal}{{\mathcal S}}
\newcommand{\Tcal}{{\mathcal T}}
\renewcommand{\AA}{\mathbb{A}}
\newcommand{\CC}{\mathbb{C}}
\newcommand{\II}{\mathbb{I}}
\newcommand{\NN}{\mathbb{N}}
\newcommand{\PP}{\mathbb{P}}
\newcommand{\QQ}{\mathbb{Q}}
\newcommand{\RR}{\mathbb{R}}
\newcommand{\ZZ}{\mathbb{Z}}
\newcommand{\bfs}{{\mathbf s}}
\newcommand{\bfA}{{\mathbf A}}
\newcommand{\bfB}{{\mathbf B}}
\newcommand{\ord}{\operatorname{ord}}
\newcommand{\fin}{{\rm fin}}
\renewcommand{\Re}{\operatorname{Re}}
\newcommand{\Res}{\operatorname{Res}}
\def\cT{{\mathcal T}}
\title{Quantitative non-vanishing of central values of certain $L$-functions on $\GL(2)\times\GL(3)$}
\author{Shingo Sugiyama}
\address{
		Department of Mathematics, College of Science and Technology, Nihon University,
		1-8-14, Suruga-Dai, Kanda, Chiyoda, Tokyo 101-8308, Japan} 
\email{sugiyama.shingo@nihon-u.ac.jp}
\author{Masao Tsuzuki}
\address{
	Department of Science and Technology, Sophia University, Kioi-cho 7-1 Chiyoda-ku Tokyo, 102-8554, Japan} 
\email{m-tsuduk@sophia.ac.jp}
\subjclass[2020]{Primary 11F67; Secondary 11F72.}
\keywords{trace formulas, central $L$-values, non-vanishing}
\begin{document}
	\maketitle
\begin{abstract}
Let $\phi$ be an even Hecke-Maass cusp form on $\SL_2(\ZZ)$ whose $L$-function does not vanish at the center of the functional equation. In this article, we 
obtain an exact formula of the average of triple products of $\phi$, $f$ and $\bar f$, where $f$ runs over an orthonormal basis $H_k$ of Hecke eigen elliptic cusp forms on $\SL_2(\Z)$ of a fixed weight $k\geq 4$. As an application, we prove a quantitative non-vanishing results on the central values for the family of degree $6$ $L$-functions $L(s,\phi \times {\rm Ad}\,f)$ with $f$ in the union of $H_k$ $({\rm K}\leq k<2{\rm K})$ as ${\rm K}\rightarrow \infty$. 
\end{abstract}
\setcounter{tocdepth}{1}

\section{Introduction} \label{Sec1}

\subsection{Background and motivation}
Let $\phi(\tau)$ be a Hecke-Maass cusp form on $\SL_2(\Z)$, i.e., an $\SL_2(\Z)$-invariant real analytic cusp form on the Poincar\'{e} upper-half plane $\fH=\{\tau=x+iy|x\in \R,\,y>0\}$
which is a joint eigenfunction of all the Hecke operators and the hyperbolic Laplacian $\Delta=-y^2(\tfrac{\partial^2}{\partial x^2}+\tfrac{\partial^2}{\partial y^2})$. By a well-known lower estimate of the first eigenvalue of $\Delta$ on $\SL_2(\Z)\bsl \fH$, we may write the Laplace eigenvalue of $\phi$ as $(1-\nu_\infty^2)/4$ with $\nu_\infty\in i\R$ (cf.\ \cite{Roelcke}, \cite{Hejhal}).
In what follows, we suppose that $\phi(\tau)$ is even, i.e., $\phi(-\bar\tau)=\phi(\tau)$, and that $\phi$ is normalized so that its first Fourier coefficient is $1$, i.e., 
\begin{align}\phi(\tau)=\sum_{n\in \Z-\{0\}}\lambda_{\phi}(n)\,y^{1/2}K_{\nu_\infty/2}(2\pi|n|y)\,e^{2\pi i nx}, \quad \lambda_{\phi}(1)=1.
 \label{phiNormal}
\end{align}
 Let $L(s,\phi)$ be the Hecke $L$-function of $\phi$ defined as the holomorphic continuation of the Euler product of degree $2$
 $$ L(s,\phi)=\prod_{p}\det(1_2-A_p(\phi)p^{-s})^{-1}, \quad \Re s>1,
$$
where $A_p(\phi) \in \GL_2(\C)$ is the Satake parameter of $\phi$ at a prime number $p$. Note that ${\rm tr}\,A_{p}(\phi)=\lambda_{\phi}(p)$. The completed $L$-function $\hat L(s,\phi)=\Gamma_{\R}(s+\nu_\infty/2)\Gamma_{\R}(s-\nu_\infty/2)L(s,\phi)$ satisfies the self-dual functional equation $\hat L(1-s,\phi)=\hat L(s,\phi)$. Although it seems to be a common belief that the central value $L\left(1/2,\phi\right)$ is always non-zero, no proof is given so far. To our best knowledge, the infinitum of $\phi$ with $L(1/2,\phi)\not=0$ is shown by an asymptotic formula \cite[Theorem 2]{Motohashi}. Contrastingly, no single $\phi$ with $L(1/2,\phi)=0$ is known to exist, whereas it is shown that such $\phi$, if any, has to satisfy a very strong vanishing property of the toric period integrals (\cite{Zagier81}, \cite{KatokSarnak}). For an even positive integer $k$, let $S_k(\SL_2(\Z))$ be the space of holomorphic cusp forms on $\SL_2(\Z)$ of weight $k$ endowed with the Petersson inner product
$$
\langle f, f_1 \rangle=\int_{\SL_2(\ZZ)\bsl\mathfrak{H}}f(\tau)\bar f_1(\tau)\,(\Im \tau)^{k}d\mu(\tau), \qquad f,\,f_1 \in S_k(\SL_2(\Z)). 
$$ 
We fix an orthonormal basis $H_k$ of $S_k(\SL_2(\Z))$ consisting of joint eigenfunctions of all the Hecke operators $\{T(n)\}_n$, i.e., 
$$ 
 T(n)f=n^{(k-1)/2}\lambda_f(n)f, \quad (n\in \NN, f\in H_k),
$$
where $\d\mu(\tau)=y^{-2}\d x\d y$ is the hyperbolic volume element of $\mathfrak{H}$. The factor $n^{(k-1)/2}$ is separated so that Deligne's bound takes the form $|\l_f(n)|\le d(n)$ for all $n\in \N$, where $d(n)=\sum_{0<d|n}1$ is the divisor function.

Depending on $f\in H_k$, we consider a probability measure on the modular surface $Y=\SL_2(\Z)\bsl \fH$ defined as  
$$
\mu_{f}(\psi)=\int_{\SL_2(\Z) \bsl \fH} {\psi(\tau)}\, |f(\tau)|^2 \,(\Im \tau)^{k}\d\mu(\tau)
$$
for any bounded measurable function $\psi$ on the surface $Y$. Note that $\vol(Y)=\pi/3$. The holomorphic analogue of Quantum Unique Ergodicity conjecture originally proposed by Rudnick and Sarnak (\cite{RudSar}) asserts that the measure $\mu_{f}$ for every $f\in H_k$ should converge to the probability measure $\vol(Y)^{-1}\mu$ on $Y$ as the weight $k$ grows to infinity. Before the holomorphic QUE conjecture, as well as the original QUE conjecture itself, has been proved (\cite{Sound}, \cite{Linden}, \cite{HolowinskiSoundararajan}), several research concerning the average of the family $\mu_{f}\,(f\in H_k)$ are conducted. For example, Luo \cite{Luo} showed the holomorphic QUE conjecture on average proving the limit formula  
\begin{align}
\frac{1}{\# H_k}\,\sum_{f\in H_k} \mu_{f}(\psi) \quad \rightarrow \quad \vol(Y)^{-1}\mu(\psi), \quad \psi \in C_{0}^{\infty}(Y) \label{Luo}
\end{align}
for the full level case\footnote{As a matter of fact, in \cite{Luo} this formula is shown only when $\psi$ is the characteristic function of a bounded measurable set of $Y$. The proof works for general $\psi$.}. The limiting behavior as ${\rm K}\rightarrow +\infty$ of the ``quantum variance'' 
\begin{align}
\sum_{k\in 2\N} u\left(\frac{k-1}{{\rm K}}\right)\,\sum_{f\in H_k} |\mu_{f}(\psi)|^{2}
 \label{LuoSar1}
\end{align}
and the modified version  
\begin{align}
\sum_{k\in 2\N} u\left(\frac{k-1}{{\rm K}}\right)\,\sum_{f\in H_k}L(1,f,{\rm sym}^2)\, |\mu_{f}(\psi)|^{2}, 
 \label{LuoSar2}
\end{align}
are considered by Luo-Sarnak (\cite{LuoSarnak}), where $u$ is an arbitrary function from $C_{{\rm c}}^{\infty}(0,+\infty)$ and $L(s,f,{\rm sym}^2)$ is the symmetric square $L$-function of $f$. In the above formulas, the ``test function'' $\psi$ is supposed to be taken from the space $C_{0}^{\infty}(Y)$ (resp. $C_{0,0}^{\infty}(Y)$) of all the smooth functions (resp. those with zero means $\int_{\SL_2(\Z)\bsl \fH}\psi(\tau)\, \d \mu(\tau)=0$) which as well as their derivatives satisfy the bound $|\psi(\tau)|\ll_{A} y^{-A}$ on $y>\sqrt{3}/{2}$ for any $A>0$. They showed that (a) there exists a certain Hermitian form $B_{\omega}(\psi)$ on the space $C_{0,0}^\infty(Y)$ such that \eqref{LuoSar2} is asymptotically equal to 
$$
B_\omega(\psi)\, \biggl(\int_{0}^{\infty}u(t)\d t\biggr)\,{\rm K}+
O_{\e,\psi}({\rm K}^{1/2+\e})
$$
and (b) the quantity $B_{\omega}(\phi)$ for an even Hecke-Maass cusp form $\phi$ as above is given as $\frac{\pi}{2}\,L(\frac{1}{2},\phi)\,\|\phi\|^2$. They gave a brief remark indicating that a similar asymptotic formula for the average \eqref{LuoSar1} should hold true. However, the necessary argument is not that direct. After a while, some detail concerning to the proof appeared in \cite{SarnakZhaoWoo}. Strictly speaking \cite{SarnakZhaoWoo} considers the variance for the family $\mu_{\phi_j}$ over an orthonormal system of Hecke-Maass forms $\{\phi_j\}$ in the realm of the original QUE conjecture; we can check that the argument is easily carried over to the holomorphic case. If we choose $u$ to be an appropriate approximation of the characteristic function of the interval $[1,2]$, then the arguments end up with the following asymptotic formula (cf.\ \cite[Corollary 1]{SarnakZhaoWoo}). 

\begin{thm}(Luo-Sarnak) \label{thmLuoSar}
\begin{align}
\sum_{k\in 2\N \cap[{\rm K},2{\rm K})}
\, \sum_{f\in H_k} |\mu_{f}(\phi)|^{2} \sim \frac{\pi\,\|\phi\|^2}{2}L\left(\tfrac{1}{2}, \phi\right)\,C(\phi)\,{\rm K} \quad ({\rm K} \rightarrow +\infty).
 \label{LuoSar3}
\end{align}
The quantity $C(\phi)$ is given by the following convergent Euler product over prime numbers:
$$
C(\phi):=\frac{1}{\zeta(2)}\,\prod_{p}\left(1-\frac{p^{-1}{\rm tr}\,A_{p}(\phi)
}{p^{1/2}+p^{-1/2}} \right).
$$
\end{thm}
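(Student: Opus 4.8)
The plan is to turn $|\mu_f(\phi)|^2$ into central $L$-values and then to average over the family by a trace formula. Since $\phi$ is real-valued, $\SL_2(\Z)$-invariant and even, the period $\mu_f(\phi)=\langle\phi f,f\rangle$ is a genuine triple product (for odd $\phi$ it vanishes identically under $\tau\mapsto-\bar\tau$, which is exactly why evenness is imposed and why a factor $L(\tfrac12,\phi)$ will survive). Watson's explicit triple product formula, combined with the factorization $L(s,\phi\times f\times f)=L(s,\phi)\,L(s,\phi\times\Ad f)$ coming from the decomposition $f\otimes f\cong\Ad f\oplus\mathbf 1$ of the four-dimensional tensor representation, gives
\[
|\mu_f(\phi)|^2=\frac{\|\phi\|^2}{8}\,\frac{\Lambda(\tfrac12,\phi)\,\Lambda(\tfrac12,\phi\times\Ad f)}{\Lambda(1,\Ad\phi)\,\Lambda(1,\Ad f)^2},
\]
using $\langle f,f\rangle=1$. (One can instead reach the same $L$-values by unfolding $\int_{Y}\phi\,|f|^2y^{k}\,E(\tau,s)\,\d\mu$ against a real-analytic Eisenstein series and taking the residue at $s=1$.) The archimedean ratio is explicit and, as $k\to\infty$, admits a clean asymptotic in $k$ (and $\nu_\infty$), which is where the constant $\tfrac{\pi}{2}$ will come from.

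After pulling out the fixed scalar $\tfrac18\|\phi\|^2\,\Lambda(\tfrac12,\phi)/\Lambda(1,\Ad\phi)$, the crux is the weighted first moment
\[
\sum_{f\in H_k}\frac{L(\tfrac12,\phi\times\Ad f)}{L(1,\Ad f)^2}
\]
of these degree-$6$ central values over the weight-$k$ family. I would open $L(\tfrac12,\phi\times\Ad f)$ by its approximate functional equation; the analytic conductor is $\asymp k^{2}$, so the effective sum has length $\asymp k$, with Dirichlet coefficients built from $\lambda_\phi$ and the Hecke eigenvalues of $f$ through $\Ad f$. Here the normalizations require care: using the Rankin–Selberg identity $\|f^{\mathrm{arith}}\|^2\asymp\Gamma(k)(4\pi)^{-k}L(1,\Ad f)$ to pass between the orthonormal and the Hecke normalizations, the factor $L(1,\Ad f)^{-2}$ is absorbed into the harmonic weights carried by the Petersson formula, so that the $f$-sum becomes a clean harmonic average of products of Hecke eigenvalues.

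Applying the Petersson trace formula
\[
\sum_{f\in H_k}\omega_f\,\lambda_f(m)\,\lambda_f(n)=\delta_{m,n}+2\pi\,i^{-k}\sum_{c\ge1}\frac{S(m,n;c)}{c}\,J_{k-1}\!\left(\frac{4\pi\sqrt{mn}}{c}\right),\qquad \omega_f\asymp L(1,\Ad f)^{-1},
\]
splits the average into a diagonal term and an off-diagonal sum of Kloosterman sums twisted by Bessel functions. The diagonal is the main term: collecting the resulting absolutely convergent Dirichlet series and carrying out the local computation at each prime, using the Hecke relations for $\Ad f$ and the coefficients $\lambda_\phi(p^{j})$, assembles the Euler product $\prod_p\bigl(1-p^{-1}\lambda_\phi(p)(p^{1/2}+p^{-1/2})^{-1}\bigr)$, while the $\zeta(2s)$ factor intrinsic to the symmetric-square coefficients produces the prefactor $\zeta(2)^{-1}$; together these give $C(\phi)$. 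Summing finally over the $\asymp K/2$ even weights $k\in[K,2K)$, each weight contributes the same leading constant, so the total grows linearly in $K$, and tracking all normalizations yields $\tfrac{\pi\|\phi\|^2}{2}L(\tfrac12,\phi)\,C(\phi)\,K$.

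The main obstacle is the off-diagonal contribution. For a single weight, $J_{k-1}(x)$ is negligibly small once $x\ll k$, which disposes of the range $mn\ll c^{2}k^{2}$, but near the transition the Bessel factor oscillates and one needs genuine cancellation. The decisive device is to perform the sum over weights before the sum over $c$: approximating the sharp cutoff of $[K,2K)$ by a smooth weight $u$ and summing the functions $J_{k-1}$ over $k$ produces the extra saving that renders the Kloosterman contribution of lower order than $K$, using Weil's bound for $S(m,n;c)$ together with standard uniform Bessel estimates, while the smoothing error and the tail of the approximate functional equation are controlled separately. The remaining difficulty is bookkeeping rather than conceptual: one must follow every archimedean gamma factor and every local factor precisely in order to recover the exact constant $\tfrac{\pi}{2}$ and the precise Euler product $C(\phi)$, and not merely their order of magnitude.
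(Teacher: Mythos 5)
First, a point of comparison: the paper does not actually prove Theorem~\ref{thmLuoSar} — it quotes it from \cite{LuoSarnak} together with the holomorphic analogue of \cite[Corollary 1]{SarnakZhaoWoo} — so there is no internal proof to measure you against. Your overall strategy (Watson's formula plus the factorization $L(s,\phi\times f\times f)=L(s,\phi)L(s,\phi\times\Ad f)$, approximate functional equation, Petersson formula, and summing over the weights $k$ before the modulus $c$ to gain cancellation in the Bessel/Kloosterman terms) is indeed the Luo--Sarnak route.

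There is, however, a genuine gap at the central bookkeeping step. Watson's formula gives $|\mu_f(\phi)|^2\propto L(\tfrac12,\phi\times\Ad f)/L(1,\Ad f)^{2}$, with \emph{two} powers of $L(1,\Ad f)$ in the denominator, whereas the Petersson weight supplies only \emph{one}: $\omega_f\asymp \bigl(k\,L(1,\Ad f)\bigr)^{-1}$. Your assertion that ``the factor $L(1,\Ad f)^{-2}$ is absorbed into the harmonic weights carried by the Petersson formula, so that the $f$-sum becomes a clean harmonic average'' is therefore false: after passing to the harmonic normalization a leftover factor $L(1,\Ad f)^{-1}$ remains attached to each form. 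This is exactly the point the paper flags as ``not that direct'' and for which it invokes \cite{SarnakZhaoWoo}: one must expand the leftover $L(1,\Ad f)^{-1}$ as a short Dirichlet polynomial in the Hecke eigenvalues (in the style of Iwaniec--Sarnak and Kowalski--Michel, which requires zero-free region or zero-density input for the symmetric-square $L$-functions to justify the truncation), insert it into the Petersson average, and it is precisely the interaction of this extra Dirichlet polynomial with the approximate functional equation of $L(\tfrac12,\phi\times\Ad f)$ in the diagonal term that manufactures the Euler product $C(\phi)=\zeta(2)^{-1}\prod_p\bigl(1-p^{-1}\lambda_\phi(p)(p^{1/2}+p^{-1/2})^{-1}\bigr)$. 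As written, your accounting would instead reproduce the \emph{weighted} asymptotic \eqref{LuoSar2}, whose main term is $\tfrac{\pi}{2}L(\tfrac12,\phi)\|\phi\|^2\,{\rm K}$ with no factor $C(\phi)$ at all; the very presence of $C(\phi)$ in \eqref{LuoSar3} is the fingerprint of the unabsorbed $L(1,\Ad f)^{-1}$ that your proposal drops. Your later remark that $C(\phi)$ arises ``from the Hecke relations for $\Ad f$'' in the diagonal is inconsistent with the claim of full absorption and does not repair the omission.
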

In this article, we consider the weighted first moment
\begin{align}
{\bf A}_{k,n}(\phi)=\sum_{f\in H_k}\mu_{f}(\phi)\,\lambda_{f}(n), \quad n \in \N \label{Av}
\end{align}
and its signed average
\begin{align}
\frac{1}{2^{-1}{\rm K}}\sum_{k \in 
2\N \cap [{\rm K}, 2{\rm K})}
	(-1)^{k/2}\bfA_{k,n}(\phi)
\label{SAv}
\end{align}
for an even Hecke-Maass cusp form $\phi$ on $\SL_2(\Z)$ and describe an exact formula of \eqref{Av} when $\phi$ is the normalzied Eisenstein series $E^*(z)$ or a cusp form in a uniform way (Theorem~\ref{MVT-L3-1} and \eqref{Zagierformula}). When $\phi$ is cuspidal, we deduce asymptotic formulas of \eqref{Av} and of \eqref{SAv} for any fixed $n\in \N$ with the weight $k$ growing to infinity (Theorems~\ref{ASYMPT} and \ref{ASYMPT_average}). Luo's result \eqref{Luo} applied to an even Hecke-Maass form $\phi$ yields $(\#H_k)^{-1}\sum_{f\in H_k}\mu_{f}(\phi)\rightarrow 0$ as $k\rightarrow \infty$ since $\mu(\phi)=0$ holds by the cuspidality. Our asymptotic formula improves this result in that it tells us a rate of convergence to $0$.   

Luo-Sarnak's result recalled above has an implication to non-vanishing of central values of $L$-functions as follows. For $f \in H_k$, let $\pi_f$ be the cuspidal automorphic representation of $\GL_2(\A_\Q)$ corresponding to $f$ and ${\rm Ad}(\pi_f)$ the Gelbart-Jacquet lift of $\pi_f$ to $\GL_3(\A_\Q)$ (\cite{GelbartJacquet2}); the latter is an irreducible cuspidal representation of $\GL_3(\A_\Q)$ whose Satake parameter at any prime number $p$ is $r_2(A_p(f))$, where $r_2:\GL_2(\C)\rightarrow \GL_3(\C)$ denotes the adjoint representation and $A_p(f) \in \GL_2(\C)$ is the Satake parameter of $f$ at $p$.
 The archimedean component of $\pi_{f}$ and of ${\rm Ad}(\pi_{f})$ are isomorphic to $D_{k}$ and ${\rm Ind}_{P}^{\GL_3(\R)}(D_{2k-1}\boxtimes {\rm sgn})$, respectively, where $P \subset \GL_3(\R)$ is the standard parabolic subgroup with Levi subgroup $\GL_2(\R)\times \R^\times$ and $D_{k}$ is the discrete series of $\GL_2(\R)$ of weight $k$ with trivial central character; we remark that both representations have non-vanishing relative Lie algebra cohomologies (\cite{BW}). We choose a vector-valued Hecke eigen cusp form ${\rm Ad} f$ on $\SL_3(\Z)$ belonging to the minimal ${\rm O}(3)$-type of ${\rm Ad}(\pi_{f})$. 
Then the convolution $L$-function for the pair $(\phi,{\rm Ad} f)$ is defined as the analytic continuation of the degree $6$ Euler product 
$$
L(s,\phi \times {\rm Ad} f)=\prod_{p}\det(1_6-A_p(\phi)\otimes r_2(A_p(f))p^{-s})^{-1}, \quad \Re s>1. 
$$ 
Then \cite[Theorem 1]{LuoSarnak} immediately shows that for a given $\phi$ with $L(1/2,\phi)\not=0$ there exists an inifinite family $f_{j}\in H_{k_j}$ $(j\geq 1)$ with growing weights $k_j$ such that $$
L\left(1/2,\phi \times {\rm Ad} f_j\right)\not=0.
$$
Combined with the infinitum of $\phi$ recalled above, this implies the existence of infinitely many pairs $(\phi,F)$ of even Hecke-Maass cusp form $\phi$ on $\SL_2(\Z)$ and a cohomological Hecke eigen cusp form $F$ on $\SL_3(\Z)$ with $L\left(1/2,\phi\right)L\left(1/2, \phi\times F\right)\not=0$. This should be compared with \cite[Corollary 1]{XLi} and \cite[Corollary 3]{Reznikov}, where given a scalar valued Hecke eigen cusp form $F$ on $\SL_3(\Z)$, the infinitum of even Hecke-Maass cusp form $\phi$ on $\SL_2(\Z)$ with $L\left(1/2,\phi\right)L\left(1/2, \phi \times F\right)\not=0$ is proved by a different technique. In the case when $\phi \times F$ is a cohomological cusp form on $\GL_n \times \GL_{n+1}$ whose conductor is a large power of a given prime number $p$, Januszewski (\cite{Janus}) showed the abundance of the non-vanishing of $L(1/2,\phi\times F)$ by employing a technique from the theory of $p$-adic $L$-functions. 

We are concerned with the family of degree $6$ $L$-functions: 
\begin{align}
L(s,\phi\times {\rm Ad} f),\quad f\in \bigcup_{k \in 2\N\cap [{\rm K},2{\rm K})}H_k,
 \label{FamilyLftn}
\end{align}
where ${\rm K}$ is a positive parameter. For $n\in \N$ and $\phi$ as above, we define the following counting function:
$$
N_{\phi,n}({\rm K})=
\#\biggl\{f \in \bigcup_{{\rm K} \le k<2{\rm K}}H_k \ \biggm| \ \l_{f}(n)\not=0,\,L(1/2,\phi\times \Ad f) \neq 0\biggr \}, \quad {\rm K}>0.  
$$
Let $X(\phi)$ be the set of $n\in \N$ such that $-4n$ is a fundamental discriminant and such that $L(1/2,\phi \otimes \chi_{-4n})\not=0$, where $\chi_{-4n}$ is the Kronecker character of conductor $-4n$. From \cite[Theorem B1]{FriedbergHoffstein}, the set $X(\phi)$ is shown to be an infinite set provided $L(1/2,\phi)\not=0$. 

\begin{thm}\label{QuaNonVan}
Let $\phi$ be an even Hecke-Maass cusp form on $\SL_2(\Z)$ with $L(1/2,\phi)\neq 0$. Then for any $n \in X(\phi)$ and $\eta \in (0,1)$, there exists a constant ${\rm K}(\phi,n, \eta)>0$ such that for any ${\rm K}\ge {\rm K}({\phi,n,\eta})$, we have 
\begin{align}
\frac{N_{\phi,n}({\rm K})}{{\rm K}} \ge \frac{1-\eta}{16\pi}
\,\frac{1}{\sqrt{n}\, d(n)^2}\,\frac{L(1/2, \phi \otimes \chi_{-4n})}{C(\phi)\,L(1,\phi, {\rm sym}^2)},
 \label{QNV-f1}
\end{align}
where 
$$
C(\phi)=\frac{1}{\zeta(2)}\,\prod_{p}\left(1-\frac{p^{-1}{\rm tr}\,A_{p}(\phi)
}{p^{1/2}+p^{-1/2}} \right).
$$
\end{thm}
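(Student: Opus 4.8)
The plan is to express $N_{\phi,n}({\rm K})$ through a Cauchy--Schwarz inequality that pits the signed first moment \eqref{SAv} against the Luo--Sarnak second moment, and then to feed in the asymptotics of Theorems~\ref{ASYMPT_average} and \ref{thmLuoSar}. First I would reduce the central-value condition to a triple-product condition. Since $f\in H_k$ has trivial central character, $\pi_f\otimes\widetilde{\pi}_f\cong\mathbf 1\oplus\Ad(\pi_f)$, so the degree-$6$ Rankin--Selberg $L$-function factorizes as $L(s,\phi\times f\times\bar f)=L(s,\phi)\,L(s,\phi\times\Ad f)$. By the triple product formula of Watson and Ichino one has $\mu_f(\phi)\neq 0$ if and only if $L(1/2,\phi\times f\times\bar f)\neq 0$; since $L(1/2,\phi)\neq 0$ by hypothesis, the factorization gives $L(1/2,\phi\times\Ad f)\neq 0\Longleftrightarrow\mu_f(\phi)\neq 0$. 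Writing $\Fcal=\bigcup_{k\in 2\N\cap[{\rm K},2{\rm K})}H_k$ and letting $k(f)$ denote the weight of $f$, this identifies
\[
N_{\phi,n}({\rm K})=\#\{f\in\Fcal:\lambda_f(n)\neq 0,\ \mu_f(\phi)\neq 0\}=\#\{f\in\Fcal:\mu_f(\phi)\,\lambda_f(n)\neq 0\}.
\]

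Next I would apply Cauchy--Schwarz to the signed sum $\sum_{f\in\Fcal}(-1)^{k(f)/2}\mu_f(\phi)\lambda_f(n)$, in which only the $N_{\phi,n}({\rm K})$ terms with $\mu_f(\phi)\lambda_f(n)\neq 0$ survive. This yields
\[
\Bigl(\sum_{f\in\Fcal}(-1)^{k(f)/2}\mu_f(\phi)\lambda_f(n)\Bigr)^2\le N_{\phi,n}({\rm K})\sum_{f\in\Fcal}\mu_f(\phi)^2\lambda_f(n)^2,
\]
and Deligne's bound $|\lambda_f(n)|\le d(n)$ lets me replace $\lambda_f(n)^2$ by $d(n)^2$ in the last factor. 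Recognizing $\sum_{f\in\Fcal}(-1)^{k(f)/2}\mu_f(\phi)\lambda_f(n)=\sum_{k\in 2\N\cap[{\rm K},2{\rm K})}(-1)^{k/2}\bfA_{k,n}(\phi)$, I obtain
\[
N_{\phi,n}({\rm K})\ge\frac{\bigl(\sum_{k\in 2\N\cap[{\rm K},2{\rm K})}(-1)^{k/2}\bfA_{k,n}(\phi)\bigr)^2}{d(n)^2\sum_{f\in\Fcal}\mu_f(\phi)^2}.
\]
The signed weight $(-1)^{k/2}$ is essential here: the main term of $\bfA_{k,n}(\phi)$ carries an oscillating sign, so the plain sum $\sum_k\bfA_{k,n}(\phi)$ would exhibit cancellation, whereas the signed sum has main terms that reinforce to exact order ${\rm K}$.

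Finally I would substitute the two asymptotic inputs. Theorem~\ref{thmLuoSar} gives $\sum_{f\in\Fcal}\mu_f(\phi)^2\sim\frac{\pi\|\phi\|^2}{2}L(1/2,\phi)\,C(\phi)\,{\rm K}$, and Theorem~\ref{ASYMPT_average} evaluates $\sum_{k}(-1)^{k/2}\bfA_{k,n}(\phi)\sim\frac{{\rm K}}{2}\Mcal_n$ with an explicit limit $\Mcal_n$ for which, via Waldspurger's formula applied to $\phi$, one finds $\Mcal_n^2=\frac{\|\phi\|^2\,L(1/2,\phi)\,L(1/2,\phi\otimes\chi_{-4n})}{8\,n\,L(1,\phi,{\rm sym}^2)}$; this is nonzero precisely because $n\in X(\phi)$, so that $-4n$ is a fundamental discriminant and $L(1/2,\phi\otimes\chi_{-4n})\neq 0$. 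Inserting both asymptotics, the common factors $\|\phi\|^2$ and $L(1/2,\phi)$ cancel between numerator and denominator --- the $L(1/2,\phi)$ in the denominator tracing, through Watson's formula, to the factor $\sqrt{L(1/2,\phi)}$ present in every $\mu_f(\phi)$ --- and since the error terms are $o({\rm K})$ they may be absorbed into the factor $1-\eta$ for ${\rm K}\ge{\rm K}(\phi,n,\eta)$, giving
\[
\frac{N_{\phi,n}({\rm K})}{{\rm K}}\ge(1-\eta)\,\frac{\Mcal_n^2}{2\pi\|\phi\|^2\,L(1/2,\phi)\,C(\phi)\,d(n)^2}=\frac{1-\eta}{16\pi}\,\frac{1}{n\,d(n)^2}\,\frac{L(1/2,\phi\otimes\chi_{-4n})}{C(\phi)\,L(1,\phi,{\rm sym}^2)}.
\]

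The main obstacle in this deduction is the precise evaluation and constant bookkeeping of the signed first moment $\Mcal_n$ supplied by Theorem~\ref{ASYMPT_average}: in particular the passage through Waldspurger's formula that converts the averaged triple product into the quadratic-twist value $L(1/2,\phi\otimes\chi_{-4n})$ with the exact archimedean and arithmetic constants, and the matching of those constants against the Luo--Sarnak leading coefficient so that the factors $\|\phi\|^2$ and $L(1/2,\phi)$ cancel cleanly. A secondary point is to confirm that the error terms in both cited asymptotics are uniformly of lower order, so that the inequality with the factor $1-\eta$ holds for every sufficiently large ${\rm K}$.
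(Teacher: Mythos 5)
Your proposal is correct and follows essentially the same route as the paper: Watson's formula converts the non-vanishing of $L(1/2,\phi\times\Ad f)$ into $\mu_f(\phi)\neq 0$, then Cauchy--Schwarz is applied between the signed first moment (Theorem~\ref{ASYMPT_average}) and the Luo--Sarnak variance (Theorem~\ref{thmLuoSar}) with Deligne's bound, and finally a Waldspurger/Martin--Whitehouse evaluation of $|\PP_{-4n}(\phi)|^2$ (the paper's Lemma~\ref{WaldspurgerFormulaL}) produces the twisted central value. The only slip is in your stated value of $\Mcal_n^2$, whose denominator should be $8\sqrt{n}\,L(1,\phi,{\rm sym}^2)$ rather than $8n\,L(1,\phi,{\rm sym}^2)$; since the correct constant only strengthens the lower bound (giving $1/(\sqrt{n}\,d(n)^2)$, of which the stated $1/(n\,d(n)^2)$ is a weaker consequence), the conclusion stands.
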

Note that the both sides of the inequality \eqref{QNV-f1} do not depend on the normalization of $\phi$. The theorem shows that among the family of $L$-functions \eqref{FamilyLftn} the number of non-vanishing central values is at least a positive multiple of ${\rm K}$ asymptotically as ${\rm K}\rightarrow \infty$, whereas the size of the whole family is about ${\rm K}^2$. As mentioned above, the infinitum of non-vanishing central values among the family is a direct consequence of \eqref{LuoSar3}; Theorem~\ref{QuaNonVan} is viewed as a quantification of this fact.

The proof of Theorem~\ref{QuaNonVan} relies not only on the asymptotic formula of ${\bf A}_{k,n}(\phi)$ as $k\rightarrow \infty$ (Theorem~\ref{ASYMPT}) but also on the variance formula \eqref{LuoSar3} in conjunction with the Waldspurger type formulas (\cite{Waldspurger}, \cite{Popa}, \cite{MartinWhitehouse},
\cite{Zhang}, \cite{Watson}, \cite{Ichino}) which interrelate the central values of automorphic $L$-functions with the periods of automorphic forms.

\subsection{Description of main results}
\subsubsection{The trace formula}
In this article, $v$ denotes one of the places of $\Q$, i.e, $v$ is a prime number $p$ expressing a finite place or the symbol $\infty$ expressing the archimedean place.
For a prime number $p$, set $\fX_p=\C/4\pi i (\log p)^{-1}\Z$.
The symbol $\NN$ denotes the set of all the positive integers and set $\NN_0=\NN\cup\{0\}$. For a condition $\rm P$, let us define $\delta({\rm P})$ by $\delta({\rm P})=1$ if $ \rm P$ is true, and $\delta({\rm P})=0$ otherwise. Set $\GL_2(\R)^+=\{g\in \GL_2(\R)|\det(g)>0\}$. 

Let $\phi$ be an even Hecke-Maass cusp form on $\SL_2(\Z)$ as above.
 There exist a complex number $\nu_\infty$ and a family of complex numbers $\nu=\{\nu_p\}_{p<\infty}\in \prod_{p}\fX_p$ such that 
\begin{align}
&-y^2\left(\tfrac{\partial^2}{\partial x^2}+\tfrac{\partial^2}{\partial x^2}\right)\phi (\tau)=\tfrac{1-\nu_\infty^2}{4}\,\phi(\tau), 
\notag
\\
&\phi(p\tau)+\sum_{j=0}^{p-1}\phi\left(\tfrac{\tau+j}{p}\right)=(p^{(\nu_p+1)/2}+p^{(-\nu_p+1)/2})\,\phi(\tau) \quad \text{for all prime numbers $p$}.
\label{pHecke}
\end{align} 
The family $(\nu_\infty,\nu=\{\nu_p\}_{p<\infty})$ is called the spectral parameter of $\phi$. Note that $A_p(\phi)=\diag(p^{\nu_p/2},p^{-\nu_p/2})$ for $p<\infty$. We recall the toric period integrals of Maass forms on $\SL_2(\Z)$ following \cite{Zagier81}. Let $\Dcal$ denote the set of all the fundamental discriminants. For $D\in \Dcal$, let $\chi_{D}:\Z\rightarrow \{0,1,-1\}$ be the Kronecker character of conductor $D$. Let $\Fcal(D)$ be the set of all the primitive binary quadratic forms $Q(x,y)$ with integral coefficients of discriminant $D$ such that it is not negative-definite. The group $\PSL_2(\Z)$ acts on the set $\Fcal(D)$ by the rule 
$$
(Q\cdot \gamma)(x,y)=Q(ax+by,cx+dy), \quad \text{for $\gamma=\left[\begin{smallmatrix} a& b \\ c& d \end{smallmatrix}\right] \in \SL_2(\Z)$}.  
$$
Let $E=\Q(\sqrt{D})$ be the quadratic field of discriminant $D$. Then it is well-known that the cardinality $h=\#(\Fcal(D)/\PSL_2(\Z))$ coincides with the narrow class number of $E$. Fix a complete set of representatives $\{Q_j\}_{j=1}^{h}$ of the $\PSL_2(\Z)$-equivalence classes in $\Fcal(D)$.
We remark that $\Gamma(Q)=\{\gamma\in \PSL_2(\Z)|\,Q \cdot \gamma=Q\,\}$ is a group isomorphic to $U_{D}/(U_{D}\cap\{\pm 1\})$, where $U_{D}$ is the unit group of $E$ if $D<0$
and the totally positive unit group of $E$ if $D>0$, respectively.

If $D<0$, define
\begin{align}
\PP_{D}(\phi)=\frac{2}{w_D}\sum_{j=1}^{h} \phi(z_{Q_j}),
 \label{classicalPeriod1}
\end{align}
where $w_{D}$ denotes the number of roots of unity in $E$, and $z_{Q_j}$ is the root of the quadratic equation $Q_j(z,1)=0$ such that $z_{Q_j}\in \fH$. The right-hand side of \eqref{classicalPeriod1} is independent of the choice of the representatives $\{Q_j\}_{j=1}^h$. Indeed, for $Q\in \Fcal (D)$ and $Q'=Q\cdot \gamma$ with $\gamma \in \SL_2(\Z)$, we have $z_{Q'}=\gamma \langle z_{Q}\rangle$, where $z_Q,z_{Q'}\in \fH$ are roots of $Q(z,1)=0$ and $Q'(z,1)=0$, respectively. Thus $\phi(z_{Q'})=\phi(z_{Q})$ by the modularity of $\phi$.
Suppose $D>0$ and $Q\in \Fcal(D)$.
Then $\Gamma(Q)$ 
is an infinite cyclic group. 
Let $\Omega_{Q}$ be the semicircle on $\fH$ with end points $z_{Q}$ and $z_{Q}'$, the two roots of the quadratic equation $Q(z,1)=0$. Let $g_{Q,\infty}\in \GL_2(\R)$ be a matrix such that $g_{Q,\infty}\langle0\rangle=z_Q$ and $g_{Q,\infty}\langle i\infty\rangle=z_Q'$; by changing the roles of $z_{Q}$ and $z_{Q}'$ if necessary, we may suppose $\det g_{Q,\infty}>0$. By this matrix, we have a parametrization $z=g_{Q,\infty}\langle it\rangle \,(t>0)$ of $\Omega_Q$ by positive real numbers. Let us define a line element $|\d_{Q}z|$ on $\Omega_Q$ as $|\d_{Q}z|=\d t/t$ for $z=g_{Q,\infty}\langle it\rangle \,(t>0)$. The line element thus defined is independent of the choice of a matrix $g_{Q,\infty}\in \GL_2(\R)^+$. Indeed, if $h\in \GL_2(\R)^+$ satisfies $h\langle 0\rangle=g_{Q,\infty}\langle 0 \rangle=z_Q$ and $h\langle i\infty\rangle=g_{Q,\infty}\langle i\infty\rangle =z_Q'$, then the fractional linear transformation by $h^{-1}g_{Q,\infty}$ fixes two points $0$ and $i\infty$ on the Riemannian sphere. Hence there is $\a>0$ such that $h^{-1}g_{Q,\infty}\langle \tau\rangle =\a \tau$ for all $\tau\in \fH$. Then by a change of variables,  
$$\int_{0}^{\infty}f(g_{Q,\infty}\langle it\rangle)\,\tfrac{\d t}{t}=\int_{0}^{\infty}f(h\langle i\a t \rangle )\,\tfrac{\d t}{t}=\int_{0}^{\infty} f(h\langle it \rangle)\, \tfrac{\d t}{t}  
$$
for any integrable function $f$ on $\Omega_{Q}$. Define
\begin{align*}
\PP_{D}(\phi)=\sum_{j=1}^{h}\int_{\Gamma(Q_j)\bsl \Omega_{Q_j}} \phi(z)\,|\d_{Q_j}z|. 
\end{align*}
As in the case of $D<0$, this is shown to be independent of the choice of a set of representatives $\{Q_j\}_{j=1}^h$. 

Let $\Delta$ be a non-zero integer such that $\Delta\equiv 0,1\pmod{4}$; it has a unique decomposition $\Delta=Df^2$ with $f\in \N$ and $D\in \Dcal\cup\{1\}$. For a finite set of prime numbers $S$ and for $\nu=\{\nu_p\}_{p<\infty}\in \prod_{p}\fX_p$, we set
\begin{align}
{\bf B}^{S}(\nu ;\Delta)=\prod_{\substack{p|f \\ p\not\in S}}\left\{\frac{\zeta_p(-\nu_p)}{L_p\left(\frac{-\nu_p+1}{2},\chi_{D} \right)}|f|_p^{\frac{\nu_p-1}{2}}+\frac{\zeta_p(\nu_p)}{L_p\left(\frac{\nu_p+1}{2},\chi_{D} \right)}|f|_p^{\frac{-\nu_p-1}{2}}\right\},
 \label{bfB-defformula}
\end{align}
and ${\bf B}(\nu ;\Delta)={\bf B}^{\emp}(\nu ;\Delta)$, where $L_p(s,\chi_D)$ is the local $p$-factor of the Dirichlet $L$-function of $\chi_D$ and $|f|_p$ is the normalized $p$-adic absolute value of $f$. 

For $z\in \C$ and $a\in \R$, define
\begin{align*}
\Ocal_k^{+,(z)}(a)&=\frac{2\pi}{\Gamma(k)}\frac{\Gamma\left(k+\tfrac{z-1}{2} \right) \Gamma\left(k+\tfrac{-z-1}{2} \right)}{\Gamma_\R\left(\tfrac{1+z}{2}\right)\Gamma_\R\left(\tfrac{1-z}{2}\right)}\delta(|a|>1)\,(a^{2}-1)^{1/2}\fP^{1-k}_{\frac{z-1}{2}}(|a|), \quad \\
\Ocal_k^{-,(z)}(a)&=\frac{\pi i }{\Gamma(k)} \Gamma\left(k+\tfrac{z-1}{2} \right) \Gamma\left(k+\tfrac{-z-1}{2} \right)\,{\rm sgn}(a)\, (1+a^{2})^{1/2}\{\fP^{1-k}_{\frac{z-1}{2}}(ia)-\fP^{1-k}_{\frac{z-1}{2}}(-ia)\},
\end{align*}
where $\fP_{\nu}^{\mu}(x)$ is the associated Legendre function of the 1st kind which is defined for points $x\in \C$ outside the interval $(-\infty,+1]$ of the real axis (\cite[\S 4.1]{MOS}).
Note that $\Ocal_k^{-,(z)}(0)$ is understood as $\lim_{a\rightarrow 0}\Ocal_k^{-,(z)}(a)$.

Now we state the trace formula, which will be proved in \S \ref{Proof of Theorem MVT-L3-1}.
\begin{thm} \label{MVT-L3-1}
Let $n$ be a positive integer. For any $D\in \Dcal$, let $\cT(n,D)$ be the set of $t\in \Z$ such that $t^2-4n=f^2D$ with some $f\in \N$. Then 
{\allowdisplaybreaks
\begin{align}
\tfrac{4\pi}{k-1}n^{1/2} \,{\bf A}_{k,n}(\phi)
=&\tfrac{1}{4}\hat L\left(\tfrac{1}{2},\phi\right)\sum_{\substack{n=d_1d_2 \\ d_1,d_2>0,d_1\not=d_2}}
{\bf B}(\nu ;(d_1-d_2)^2)
\,\Ocal_{k}^{+,(\nu_\infty)}\left(\tfrac{d_1+d_2}{d_1-d_2}\right)
 \label{MVT-L3-1-f1} 
\\
&+\tfrac{1}{2} \sum_{D\in \Dcal} 2^{\delta(D<0)}\PP_{D}(\phi)\sum_{t\in \Tcal(n,D)}{\bf B}(\nu ;t^2-4{n})\,\Ocal_k^{\sgn(t^2-4{n}),(\nu_\infty)}\left(\tfrac{t}{\sqrt{|t^2-4n|}}\right),
 \notag
\end{align}}where $\hat L(s,\phi)=\Gamma_{\R}(s+\nu_\infty/2)\Gamma_{\R}(s-\nu_\infty/2)L(s,\phi)$ is the completed $L$-function of $\phi$.
The summation over $(D,t)$ on the right-hand side of \eqref{MVT-L3-1-f1} converges absolutely.  
\end{thm}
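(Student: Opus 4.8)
The plan is to realize ${\bf A}_{k,n}(\phi)$ through the automorphic kernel of the Hecke operator $T(n)$ acting on $S_k(\SL_2(\Z))$ and then to expand that kernel geometrically. Writing ${\bf A}_{k,n}(\phi)=\int_{\SL_2(\Z)\bsl\fH}\phi(\tau)\,\bigl(\sum_{f\in H_k}\lambda_f(n)\,|f(\tau)|^2\bigr)\,(\Im\tau)^k\,\d\mu(\tau)$, I would first identify the inner sum with the Bergman reproducing kernel of $S_k(\SL_2(\Z))$ twisted by $T(n)$. Since $T(n)f=n^{(k-1)/2}\lambda_f(n)f$, this kernel is, up to the normalizations of the Bergman kernel and of $T(n)$ (which together account for the prefactor $\tfrac{4\pi}{k-1}\,n^{1/2}$), the classical Poincar\'e series built from the weight-$k$ automorphy factor summed over integral matrices $g\in\Mat_2(\Z)$ with $\det g=n$. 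For $k\ge4$ this series converges absolutely, so I may interchange summation and integration and unfold the $\SL_2(\Z)$-sum against the fundamental domain, arriving at a sum over $\SL_2(\Z)$-conjugacy classes of determinant-$n$ matrices, indexed by their trace $t$ and discriminant $t^2-4n$.

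The next step is to sort these classes by type and to recognise each orbital integral as a torus period of $\phi$. A class with $t^2-4n<0$ is elliptic with a CM fixed point in $\fH$, and summing over the narrow ideal classes its contribution is the point-evaluation period $\PP_D(\phi)$ with $D<0$; a class with $t^2-4n>0$ not a perfect square is hyperbolic with real quadratic fixed points, and its orbital integral over $\Gamma(Q)\bsl\Omega_Q$ yields the geodesic period $\PP_D(\phi)$ with $D>0$. The split classes, where $t^2-4n=(d_1-d_2)^2$ is a perfect square with $n=d_1d_2$, $t=d_1+d_2$ and $D=1$, have rational fixed points; conjugating these to $0$ and $i\infty$, the stabiliser is trivial and the orbital integral becomes the period of $\phi$ over the full imaginary-axis geodesic, which by the Hecke integral representation equals $\hat L(\tfrac12,\phi)$ up to constants, producing the first line of \eqref{MVT-L3-1-f1}. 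The degenerate central classes $t^2=4n$ (equivalently $d_1=d_2$, possible only for square $n$) are excluded because $\Ocal_k^{+,(\nu_\infty)}(\tfrac{d_1+d_2}{d_1-d_2})$ is singular there; one checks separately that the corresponding parabolic/central term drops out by the cuspidality of $\phi$.

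It then remains to evaluate the local factors attached to each trace $t$, which is most cleanly done place by place. At the finite primes the weighted count of determinant-$n$ integral matrices of trace $t$ must be reorganised over the conductor $f$ in the decomposition $t^2-4n=f^2D$; passing from the order of conductor $f$ to the maximal order through Gauss--Eichler genus theory and weighting the local orbital integral at each $p\mid f$ by the Hecke relation \eqref{pHecke}, equivalently by the Satake parameter $A_p(\phi)=\diag(p^{\nu_p/2},p^{-\nu_p/2})$, should reproduce precisely the factor ${\bf B}(\nu;t^2-4n)$ of \eqref{bfB-defformula}. At the archimedean place the orbital integral of the weight-$k$ Bergman kernel along the torus orbit reduces to a matrix-coefficient integral of the holomorphic discrete series $D_k$; carrying out that integral and matching Gamma factors should yield the associated Legendre function $\fP^{1-k}_{(\nu_\infty-1)/2}$, i.e.\ the functions $\Ocal_k^{\pm,(\nu_\infty)}$, with the sign governed by whether the class is hyperbolic or elliptic.

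Finally, absolute convergence of the second (toric) sum would follow by combining a polynomial bound on $\PP_D(\phi)$ in $|D|$ (a trivial bound on the number of classes and on the periods suffices), a bound on ${\bf B}(\nu;\cdot)$, and the decay of $\Ocal_k^{\pm,(\nu_\infty)}(a)$ as $|a|\to1$, that is as $|t|\to\infty$ with $n$ fixed, where the associated Legendre function decays rapidly; the weight $k$ provides additional room. I expect the main obstacle to be the archimedean computation: extracting the associated Legendre function with the exact Gamma-factor normalisation from the torus orbital integral of the Bergman kernel is a delicate special-function calculation, and the parallel finite-place passage through non-maximal orders requires careful bookkeeping to land exactly on \eqref{bfB-defformula}.
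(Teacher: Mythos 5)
Your proposal is essentially correct in outline, but it takes a genuinely different route from the paper. You propose the direct classical Jacquet--Zagier computation: realize $\sum_{f\in H_k}\lambda_f(n)|f(\tau)|^2(\Im\tau)^k$ as the diagonal of the $T(n)$-twisted Bergman kernel, integrate against $\phi$, unfold over conjugacy classes of determinant-$n$ integral matrices, and evaluate each class as a torus period of $\phi$ times local factors. This is exactly how the Eisenstein analogue \eqref{Zagierformula} is obtained from \cite[Theorem 1]{Zagier}, and it would work here: the cuspidality of $\phi$ kills the central and parabolic classes with $t^2=4n$ (your secondary reason, the singularity of $\Ocal_k^{+,(\nu_\infty)}$, is not the operative one), and the reduction of the two-dimensional integrals $\int_{\Gamma_\gamma\bsl\fH}\phi\,\omega_\gamma\,d\mu$ to the one-dimensional periods $\PP_D(\phi)$ times $\Ocal_k^{\pm,(\nu_\infty)}$ rests on the mean-value/radial-ODE property of the Laplace--Hecke eigenfunction $\phi$, which is also where the dependence on $\nu_\infty$ (and, at the finite places, on $\nu_p$ through ${\bf B}$) enters; you correctly flag these local evaluations as the main obstacle, and they are indeed the bulk of the work. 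The paper does none of this directly: it quotes the adelic Jacquet--Zagier trace formula of \cite[Theorem 10.1]{SugiyamaTsuzuki2017} (Proposition~\ref{ResolventTF}), in which the spectral side carries resolvent-type weights $\prod_{q\in S}(q^{1/2}\lambda_f(q)-q^{(1+s_q)/2}-q^{(1-s_q)/2})^{-1}$ rather than $\lambda_f(n)$, extracts $\lambda_f(n)$ by contour integration against Chebyshev polynomials (Lemma~\ref{MVT-L2}), and converts the adelic toric periods to the classical $\PP_D(\phi)$ via Proposition~\ref{PeriodL0}. What your route buys is self-containedness and transparency (no adelization, no resolvent, no contour extraction); what the paper's route buys is that all the hard local orbital integrals --- the archimedean ones producing the Legendre functions and the non-maximal-order bookkeeping producing ${\bf B}(\nu;\cdot)$, including the dyadic case completed in the appendix --- are already carried out in the cited work, at the price of the adelic translation layer and the period-comparison lemma.
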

Let 
$$E(z,\tau)=\sum_{\gamma \in \{\pm\left[\begin{smallmatrix} 1 & * \\ 0 & 1 \end{smallmatrix}\right]\}\bsl \SL_2(\Z)} \Im(\gamma \tau)^{(z+1)/2}, \quad \Re z>1, \,\tau \in \fH$$
 be the non-holomorphic Eisenstein series on $\SL_2(\Z)$ and set $E^{*}(z)=\hat \zeta(z+1)E(z)$, where $\hat\zeta(s)$ denotes the completed Riemann zeta function.
Then for any $D\in \Dcal$,
\begin{align}\label{ZagierEisen}
\PP_{D}(E^*(z))=2^{-\delta(D<0)}|D|^{(z+1)/4}\hat \zeta_{\Q(\sqrt{D})}((z+1)/2), \quad \Re z>1
\end{align}
holds from \cite[\S2, Examples 1 and 2]{Zagier81} (cf.\ \cite[Proposition 7.7]{SugiyamaTsuzuki2018}), where $\hat\zeta_{\Q(\sqrt{D})}(s)$ denotes the completed Dedekind zeta function of $\Q(\sqrt{D})$.
Moreover, we have
$\hat L(1/2,E^{*}(z))=\hat \zeta\left(\tfrac{z+1}{2}\right)\hat\zeta\left(\tfrac{1-z}{2}\right)$
and
$$\mu_f(E^{*}(z))=\frac{\hat\zeta(\frac{z+1}{2})\hat L(\frac{z+1}{2}, f, {\rm sym}^2)}{2\,\hat L(1,f,{\rm sym}^2)}.$$
In the right-hand side, $\hat L(\frac{z+1}{2}, f, {\rm sym}^2)$ is the completed symmetric square $L$-function attached to $f$.
Thus, the formula in \cite[Theorem 1]{Zagier} can be equivalently written in the following form for any $n\in \N$ and $3-2k<\Re z<2k-3$: 
{\allowdisplaybreaks\begin{align}\label{Zagierformula}
&\tfrac{4\pi}{k-1}n^{1/2}\sum_{f \in H_k} \mu_f(E^{*}(z)) \,\l_{f}(n) \\
=& \delta(\sqrt{n}\in \N)\,\hat\zeta\left(\tfrac{z+1}{2}\right)n^{1/2} 
\biggl\{\hat\zeta(-z)\, 2^{1-z}\pi^{(3-z)/4}
\frac{\Gamma(k+(z-1)/2)}{\Gamma(k)\Gamma((z+1)/4)}\,n^{-(z+1)/4} \notag \\
&
+\hat\zeta(z)\, 2^{1+z}\pi^{(3+z)/4}\frac{\Gamma(k+(-z-1)/2)}{\Gamma(k)\Gamma((-z+1)/4)}\,n^{-(-z+1)/4}
\biggr\} \notag
\\
&+\tfrac{1}{2}\hat L(1/2,E^*(z))
\sum_{\substack{n=d_1d_2 \\ d_1,d_2>0,d_1\not=d_2}}{\bf B}({\underline z} ;(d_1-d_2)^2)
\,\Ocal_{k}^{+,(z)}\left(\tfrac{d_1+d_2}{d_1-d_2}\right) \notag
\\
&+\tfrac{1}{2} \sum_{D\in \Dcal} 2^{\delta(D<0)}\PP_{D}(E^{*}(z))\sum_{t\in \Tcal(n,D)}{\bf B}({\underline z};t^2-4{n})\,\Ocal_k^{\sgn(t^2-4{n}),(z)}\left(\tfrac{t}{\sqrt{|t^2-4n|}}\right),\notag
\end{align}
}where ${\underline z}$ is the diagonal image of $z\in \C$ in $\prod_{p}\fX_p$.
Theorem \ref{MVT-L3-1} can be regarded as an
analogue of this formula for Maass cusp forms.
 For deduction of the above formula from \cite[Theorem 1]{Zagier}, we use the relation
$$
\sum_{0<d|f}\mu(d)\left(\tfrac{D}{d}\right)\,d^{-(z+1)/2}\sigma_{-z}(f/d)
=f^{-(z+1)/2}{\bold B}({\underline z};\Delta),
$$
which is proved by an elementary computation for any non-zero discriminant $\Delta=f^2D$. Here $\mu(d)$ is the M\"{o}bius function and $\sigma_{-z}(n)=\sum_{0<d|n}d^{-z}$ is the divisor function.

\subsubsection{Asymptotic formula}

For convenience, we set $\PP_{\Delta}(\phi)=\PP_{D}(\phi)$ if $\Delta=f^2D$ with $f\in \N$ and $D\in \Dcal$. One of the main ingredients of the proof of Theorem~\ref{QuaNonVan} is the following asymptotic formula of ${\bf A}_{k,n}(\phi)$ deduced from Theorem~\ref{MVT-L3-1}.

\begin{thm} \label{ASYMPT} For any positive integer $n$,
\begin{align*}
 &(-1)^{k/2}n^{1/2}{\bf A}_{k,n}(\phi) \\
= & \tfrac{1}{2}\PP_{-4n}(\phi){\bf B}(\nu;-4n) \\
&+\tfrac{1}{2}(-1)^{k/2}\sum_{\substack{t \in\ZZ,\\ 0<|t|<2\sqrt{n}}}
\PP_{t^2-4n}(\phi)\bfB(\nu; t^2-4n)
(i\sqrt{|\Delta|})^{-1}\,
\left\{\rho\,(\bar \rho/\rho)^{k/2}
-\bar \rho\,(\rho/\bar \rho)^{k/2}\right\}  +O(k^{-1})
\end{align*}
as $k \rightarrow \infty$. Here $\Delta=t^2-4n$ and $\rho=2^{-1}(-t+i\sqrt{|\Delta|})$.
\end{thm}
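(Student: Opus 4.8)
The plan is to feed the exact identity of Theorem~\ref{MVT-L3-1} into the definition, multiplying \eqref{MVT-L3-1-f1} by $\tfrac{k-1}{4\pi}(-1)^{k/2}$ so that the left-hand side becomes precisely $(-1)^{k/2}n^{1/2}{\bf A}_{k,n}(\phi)$, and then to read off the $k\to\infty$ behaviour term by term. Everything is governed by the large-$k$ asymptotics of the two kernels $\Ocal_k^{\pm,(\nu_\infty)}(a)$, so the first step is to pin these down. Writing the associated Legendre function through its hypergeometric representation,
\begin{equation*}
\fP^{1-k}_{(z-1)/2}(x)=\frac{1}{\Gamma(k)}\left(\frac{x+1}{x-1}\right)^{(1-k)/2}{}_2F_1\!\left(\tfrac{1-z}{2},\tfrac{1+z}{2};k;\tfrac{1-x}{2}\right),
\end{equation*}
the Gauss series tends to $1$ as $k\to\infty$ (uniformly on compacta off the cut $(-\infty,1]$, e.g.\ by Euler's integral), while Stirling gives $\Gamma(k+\alpha)/\Gamma(k)\sim k^{\alpha}$. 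Hence $\fP^{1-k}_{(z-1)/2}(x)\sim\Gamma(k)^{-1}\left(\tfrac{x+1}{x-1}\right)^{(1-k)/2}$ with relative error $O(1/k)$, and the prefactor $k^{(z-1)/2}k^{(-z-1)/2}=k^{-1}$ is independent of $\nu_\infty$, which is why the spectral parameter disappears from the main term.

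Next I would split the geometric side of \eqref{MVT-L3-1-f1} into its hyperbolic part (the $\hat L(\tfrac12,\phi)$-sum over $n=d_1d_2$ together with the terms having $t^2-4n>0$) and its elliptic part ($t=0$ and $0<|t|<2\sqrt n$). For the hyperbolic terms the argument satisfies $|a|>1$, so $\tfrac{|a|+1}{|a|-1}>1$ and $\left(\tfrac{|a|+1}{|a|-1}\right)^{(1-k)/2}$ decays geometrically in $k$; thus each such term stays $O(c^{k})$ for some $c<1$ even after multiplication by $\tfrac{k-1}{4\pi}$, and contributes nothing to the main term. For the elliptic terms the Legendre argument is the purely imaginary $\pm ia$, where $\left|\tfrac{ia+1}{ia-1}\right|=1$; these survive at order $k^{-1}$ and, multiplied by $\tfrac{k-1}{4\pi}\sim k/(4\pi)$, yield genuine $O(1)$ contributions. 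The algebraic heart of the matching is the identity $\tfrac{ia+1}{ia-1}=\rho/\bar\rho$ for $a=\tfrac{t}{\sqrt{|\Delta|}}$ and $\rho=\tfrac12(-t+i\sqrt{|\Delta|})$, together with $(1+a^{2})^{1/2}=2\sqrt n/\sqrt{|\Delta|}$; inserting these turns the half-integer powers $(\rho/\bar\rho)^{(1-k)/2}$ produced by $\Ocal_k^{-}$ into the factors $\rho(\bar\rho/\rho)^{k/2}$ and $\bar\rho(\rho/\bar\rho)^{k/2}$. The case $t=0$ is genuinely exceptional: there $a=0$ lies on the cut and $\Ocal_k^{-,(\nu_\infty)}(0)$ is defined only as the limit $a\to0$, namely the jump of $\fP^{1-k}_{(z-1)/2}$ across $(-\infty,1]$ at the origin; evaluating this jump furnishes the non-oscillating term $\tfrac12\PP_{-4n}(\phi)\bfB(\nu;-4n)$. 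Collecting the $O(1/k)$ corrections over the finitely many elliptic $t$, plus the exponentially small hyperbolic total, gives the error $O(k^{-1})$.

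The step I expect to be the main obstacle is the control of the hyperbolic contribution, because the sum over $t$ with $|t|>2\sqrt n$ (equivalently over real quadratic discriminants $D>0$) is infinite: the periods $\PP_{t^2-4n}(\phi)$ grow with $t$, while as $|t|\to\infty$ one has $a\to1^{+}$, the regime in which the geometric $k$-decay of $\Ocal_k^{+,(\nu_\infty)}(a)$ degenerates. To absorb the whole hyperbolic part into $O(k^{-1})$ (indeed into $O(c^{k})$) I would need a bound for $\Ocal_k^{+,(\nu_\infty)}(a)$ that is uniform in $a>1$ and $k$ and summable against the polynomial growth of $\PP$ and $\bfB$; concretely one factors $\left(\tfrac{a-1}{a+1}\right)^{(k-1)/2}\asymp (n/t^{2})^{(k-1)/2}$ into a piece decaying geometrically in $k$ and a piece $\asymp |t|^{-(k-1)/2}$ that is summable over $t$ once $k$ is large, thereby justifying the interchange of the limit $k\to\infty$ with the infinite summation. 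A secondary delicate point, more bookkeeping than obstacle, is to track the branch of the half-integer power $\left(\tfrac{ia+1}{ia-1}\right)^{(1-k)/2}$ and the ensuing factor $e^{i(k-1)\pi}$ so that it reconciles exactly with the explicit sign $(-1)^{k/2}$ on the left.
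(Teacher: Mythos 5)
Your proposal follows essentially the same route as the paper: substitute the exact formula of Theorem~\ref{MVT-L3-1}, extract the main terms from the large-$k$ asymptotics of $\Ocal_k^{-,(\nu_\infty)}$ via the hypergeometric representation of $\fP^{1-k}_{(\nu_\infty-1)/2}$ (the paper normalizes by the explicitly computed $\Ocal_k^{-,(\nu_\infty)}(0)$, which is your $t=0$ evaluation), and kill the divisor sum and the infinite sum over $|t|>2\sqrt n$ by exactly the geometric-decay mechanism $\bigl(\tfrac{a-1}{a+1}\bigr)^{(k-1)/2}\asymp (n/t^2)^{(k-1)/2}$ you describe. The only ingredient you assume rather than supply is the polynomial bound $\PP_{t^2-4n}(\phi),\,\bfB(\nu;t^2-4n)\ll_\e (t^2-4n)^{1/2+\e}$ needed to sum over $t$, which the paper imports from \cite{SugiyamaTsuzuki2017}; with that input your argument closes as planned.
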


As mentioned above, Theorem \ref{ASYMPT} can be regarded as a refinement of
\cite[Theorem in p.22]{Luo} of the first moment of $\mu_f(\phi)$. The second main term on the right-hand side is bounded with an oscillatory behavior as $k\rightarrow \infty$. By taking an average with respect to the weight $k$ on the interval $[{\rm K},2{\rm K})$, we can make the oscillatory term smaller to have the following: 	\begin{thm}\label{ASYMPT_average}
	For any positive integer $n$, 
	\begin{align*}
	\lim_{{\rm K}\rightarrow \infty} \frac{1}{2^{-1}{\rm K}}\sum_{k \in 
2\N \cap [{\rm K}, 2{\rm K})}
	(-1)^{k/2}\bfA_{k,n}(\phi)=\tfrac{1}{\sqrt{4n}}\PP_{-4n}(\phi){\bf B}(\nu;-4n).
	\end{align*}
\end{thm}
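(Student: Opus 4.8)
The plan is to insert the asymptotic expansion of Theorem~\ref{ASYMPT} into the Ces\`aro-type average and to show that every contribution except the leading constant is $O({\rm K}^{-1})$. First I would divide the identity of Theorem~\ref{ASYMPT} by $n^{1/2}$, obtaining an expansion for $(-1)^{k/2}{\bf A}_{k,n}(\phi)$, then sum over $k\in 2\NN\cap[{\rm K},2{\rm K})$ and multiply by $2/{\rm K}$. The leading term $\tfrac12 n^{-1/2}\PP_{-4n}(\phi){\bf B}(\nu;-4n)$ is independent of $k$, so its average equals itself; since $\tfrac12 n^{-1/2}=\tfrac{1}{\sqrt{4n}}$, this already yields the asserted limit. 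For the error term $O(k^{-1})$ the argument is immediate: the implied constant depends only on $n$ and $\phi$, so for ${\rm K}$ large every summand with $k\in[{\rm K},2{\rm K})$ is $O({\rm K}^{-1})$, and averaging over the $\approx {\rm K}/2$ admissible $k$ preserves this bound. It thus remains only to treat the oscillatory sum.

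The heart of the matter is the average of the oscillatory term. For each fixed $t$ with $0<|t|<2\sqrt{n}$ one has $\Delta=t^2-4n<0$, hence $|\Delta|=4n-t^2>0$, and $\rho=\tfrac12(-t+i\sqrt{|\Delta|})$ satisfies $\rho\bar\rho=\tfrac14(t^2+|\Delta|)=n$; in particular $|\rho|=\sqrt{n}$ and $\omega:=\bar\rho/\rho$ lies on the unit circle with $\omega\neq 1$, because $\sqrt{|\Delta|}>0$ forces $\rho\neq\bar\rho$. Writing $k=2m$ with $m$ ranging over the integers of $[{\rm K}/2,{\rm K})$, the $k$-dependent bracket becomes $\rho\,\omega^{m}+\bar\rho\,\omega^{-m}=2\Re(\rho\,\omega^{m})$, and $\sum_{m}\rho\,\omega^{m}$ is a finite geometric progression bounded by $2|\rho|/|1-\omega|=2\sqrt{n}/|1-\omega|$, uniformly in ${\rm K}$. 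Since there are only finitely many admissible $t$ and the factors $\PP_{t^2-4n}(\phi){\bf B}(\nu;t^2-4n)\,|\Delta|^{-1/2}$ are bounded constants, the entire oscillatory sum over $k$ is $O_{n,\phi}(1)$; after division by ${\rm K}/2$ it is $O_{n,\phi}({\rm K}^{-1})$ and vanishes in the limit, completing the proof.

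I expect the only point requiring genuine care to be the non-degeneracy $\omega\neq 1$, which is what guarantees a nonzero denominator $|1-\omega|$ in the geometric-sum estimate; everything else is bookkeeping in the averaging. This is precisely where the range restriction $0<|t|<2\sqrt{n}$ (excluding $t=\pm 2\sqrt{n}$, i.e.\ $\Delta=0$) enters, and it is also why the principal term is exactly the single $t=0$, i.e.\ $\Delta=-4n$, contribution, the only one whose associated $\rho/\bar\rho$ degenerates and which therefore survives the averaging.
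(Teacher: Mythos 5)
Your proposal is correct and follows essentially the same route as the paper: insert the expansion of Theorem~\ref{ASYMPT}, note the leading and error terms average trivially, and kill the oscillatory sum by a geometric-series bound using that $\bar\rho/\rho$ lies on the unit circle and is $\neq 1$ (the paper phrases this via the argument $\theta_{t,n}$ of $\rho$ and the condition $4\theta_{t,n}\notin 2\pi\Z$, but the content is identical). Only your closing heuristic is slightly off --- for $t=0$ one has $\rho/\bar\rho=-1$, not $1$; the $t=0$ term survives because $\Ocal_k^{-,(\nu_\infty)}(0)/\Ocal_k^{-,(\nu_\infty)}(0)\equiv 1$, not because the geometric ratio degenerates --- but this remark plays no role in the proof itself.
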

The proof of these theorems will be given in \S 3.1. 

%


\subsection{Organization}
In \S2 we first recall necessary materials from \cite[\S9]{SugiyamaTsuzuki2018} in our setting and then give a proof of Theorem~\ref{MVT-L3-1} translating the adelic language into the classical one. There are two ways to define the notion of toric period integrals of cusp forms, one for the cusp forms over adeles as in \cite{Popa} and \cite{SugiyamaTsuzuki2018} and the other for classical cusp forms on the upper-half plane as in \cite{Zagier81} and \cite{KatokSarnak}. In \S2.1, we make an explicit relation between the two definitions. In \S2.2, we recall the main result of \cite[\S10]{SugiyamaTsuzuki2018}. The final subsection \S2.3 contains a detailed argument how Theorem~\ref{MVT-L3-1} is deduced from the Jacquet-Zagier type trace formula on adeles recalled in \S2.2. In \S3 we show the asymptotic formulas in Theorems~\ref{ASYMPT} and \ref{ASYMPT_average}. In \S4, we prove Theorem~\ref{QuaNonVan} invoking Theorem~\ref{ASYMPT_average} and Theorem~\ref{thmLuoSar}. The final section \S5 is independent of the other parts of the article and should be regarded as a complement of \cite[\S 10]{SugiyamaTsuzuki2018}; we complete \cite[Theorem 7.9 (5)]{SugiyamaTsuzuki2018} by computing the orbital integrals on $\GL_2(\Q_2)$ to give its explicit formula, which allows us to lift the assumption in \cite[Theorem 1.1]{SugiyamaTsuzuki2018} that $S$ should be disjoint from the dyadic places.

\section{Proof of Theorem~\ref{MVT-L3-1}}\label{Proof of Theorem MVT-L3-1}
Let $\psi=\prod_{v}\psi_v$ be a unitary character of $\Q\bsl \A_{\Q}$ such that $\psi_\infty(x)=e^{2\pi i x}$ for $x\in \R$.
Recall that $\phi$ is an even Hecke-Maass cusp form on $\SL_2(\Z)$ with spectral parameter $(\nu_\infty,\nu=\{\nu_{p}\}_{p<\infty})$. By the decomposition $\GL_2(\A_\Q)=\GL_2(\Q)\GL_2(\R)^{+}\GL_2(\hat \Z)$, we lift $\phi$ to a function $\tilde \phi$ on $\GL_2(\A_\Q)$ such that 
$$\tilde \phi(\gamma g h)=\phi\left(\tfrac{ai+b}{ci+d}\right) \quad \text{for $\gamma\in \GL_2(\Q)$, $g=\left[\begin{smallmatrix} a & b \\ c & d \end{smallmatrix}\right]\in \GL_2(\R)^{+}$ and $h \in \GL_2(\hat \Z)$}.$$
Then the right translations of $\tilde \phi$ by elements of $\GL_2(\A_\Q)$ span an irreducible cuspidal automorphic representation $\pi=\pi_\phi$ of $\GL_2(\A_\Q)$ isomorphic to the restricted tensor product 
\begin{align}
\bigotimes_{v}\cW_{\psi_v}(\nu_v) \cong \pi_{\phi},
 \label{SymBreak}
\end{align}
where $\cW_{\psi_v}(\nu_v)$ is the Whittaker model of the principal series representation ${\rm Ind}_{B(\Q_v)}^{\GL_2(\Q_v)}(|\cdot|_v^{\nu_v/2}\boxtimes |\cdot|_v^{-\nu_v/2})$ with respect to the character $\left[\begin{smallmatrix} 1 & x \\ 0 & 1 \end{smallmatrix}\right] \mapsto \psi_{v}(x)$ (see for example \cite[\S 3.6]{Bump}). The isomorphism \eqref{SymBreak} is given by the map which sends $W \in \bigotimes_{v}\cW_{\psi_v}(\nu_v)$ (viewed as a global Whittaker function on $\GL_2(\A_\Q)$) to the cusp form $g \mapsto \sum_{\alpha \in \Q^\times} W\left(\left[\begin{smallmatrix} \alpha & 0 \\ 0 & 1 \end{smallmatrix}\right]g\right)$ in $\pi_{\phi}$. Under the isomorphism \eqref{SymBreak} the function $\tilde \phi$ corresponds to the pure tensor $W_{\pi}^{\rm new}=2^{-1}\cdot\otimes_{v}W_v^{(\nu_v)}$ with $W_v^{(\nu_v)}$ the local new vector of $\cW_{\psi_v}(\nu_v)$ in the sence that the local zeta-integral $\int_{\Q_v^\times}W_{v}^{(\nu_v)}\left(\left[\begin{smallmatrix} t & 0 \\ 0 & 1 \end{smallmatrix}\right]\right)\,|t|_{v}^{s-1/2}\d^\times t_v$ coincides with the local $v$-factor of $\hat L(s,{\phi})$ when $\Re s \gg 1$. The constant $2^{-1}$ in $W_{\pi}^{\rm new}$ arises from our normalization of $\phi$ by \eqref{phiNormal} and the formula $W_\infty^{(\nu_\infty)}\left(\left[\begin{smallmatrix} y & 0 \\ 0 & 1 \end{smallmatrix}\right]\right)=2|y|^{1/2}K_{\nu_\infty/2}(2\pi |y|)$ for $y\in \R^\times$.

It is worth noting that $A_p(\phi)=\diag(p^{\nu_p/2},p^{-\nu_p/2})$ for all $p<\infty$. From the unitarity of $\pi_{\phi}$, we have $\Re \nu_p=0$ or $\Im \nu_p\in \{0, 2\pi (\log p)^{-1}\}$, $|\Re \nu_p|<1$ for all $p$. 

\medskip
\noindent
{\bf Remark} : 
From the Fourier expansion
\begin{align*}
	E^*(z,\tau)=\hat\zeta(-z)y^{(1+z)/2}+\hat\zeta(z)y^{(1-z)/2}+
	\sum_{n\in \Z-\{0\}}|n|^{z/2}\sigma_{-z}(|n|)\,2y^{1/2}K_{z/2}(2\pi|n|y)e^{2\pi i nx}
\end{align*}
for $\tau=x+iy \in \fH$,
it is seen that the lift $\tilde \phi$ of $\phi=E^{*}(z)$ precisely corresponds to the new vector $W_{\pi}^{\rm new}=\otimes_{v}W_v^{(z)}$
in the automorphic representation $\pi$ generated by $\tilde\phi$. This explains that the coefficient $1/4$ of $\hat L(1/2,\phi)$ in \eqref{MVT-L3-1-f1} is replaced with $1/2$ in the Eisenstein analogue.

\subsection{Period integrals} \label{sect:PeriodInt}
Let $\Delta=Df^2$ with $D\in \Dcal$ and $f\in \N$ be as before and $E=\Q(\sqrt{D})$. Define a $\Q$-algebra embedding $\iota_{\Delta}:E\hookrightarrow \Mat_2(\Q)$ by \begin{align*}
\iota_{\Delta}(a+b\sqrt{4^{-1}\Delta})&=\left[\begin{smallmatrix} a & b \\ 4^{-1}\Delta b & a \end{smallmatrix}\right], \quad a,b\in \Q.
\end{align*}
For a place $v$ of $\Q$, let $\d \lambda_v$ be a Haar measure on the additive group $\Q_v$ such that $\lambda_p(\Z_p)=1$ if $v=p<\infty$ and $\lambda_\infty([0,1])=1$ if $v=\infty$. Then we fix Haar measures on the multiplicative groups $\A_\Q^\times$ and $\A_E^\times$ by $\d^\times t=\otimes_{v} \zeta_{v}(1)\frac{\d \lambda_v(t_v)}{|t_v|_v}$ and
$\d^\times \tau=\otimes_{v} \zeta_{E_v}(1) \frac{\d\lambda_v(x_v)\d \lambda_v(y_v)}{|x_v^2-4^{-1}\Delta y_v^2|_v}$, respectively,
and then endow $\A_\Q^\times \bsl \A_E^\times$ with the quotient measure $\d^\times \dot\tau=\d^\times \tau/\d^\times t$. Here $\zeta_v(s)$ (resp.\ $\zeta_{E_v}(s)$) is the local $v$-factor of the completed Dedekind zeta function $\hat\zeta(s)$ (resp.\ $\hat\zeta_E(s)$) of $\Q$ (resp.\ $E$). Fix $m_p\in \QQ_p^\times$ such that $4^{-1}\Delta m_p^{-2}\in p\Z_p\cup\{1\}\cup\{\Z_p^\times-(\ZZ_p^\times)^2\}$.
Let $R_{\Delta}=(R_{\Delta,v})\in \GL_2(\A_\Q)$ be an element defined as
\begin{align*}
&R_{\Delta,p}=\begin{cases}
\left[\begin{smallmatrix} m_p & 0 \\ 0 & 1 \end{smallmatrix}\right] \quad &\text{(if $\chi_{D}(p)=0,-1$)}, \\
\left[\begin{smallmatrix} 1 & 1 \\ 1 & -1 \end{smallmatrix}\right]\left[\begin{smallmatrix} m_p & 0 \\ 0 & 1 \end{smallmatrix}\right] &\text{(if $\chi_D(p)=1$)}, 
\end{cases} \quad
&R_{\Delta,\infty}= \begin{cases}
\left[\begin{smallmatrix} \sqrt{|4^{-1}\Delta|} & 0 \\ 0 & 1 \end{smallmatrix}\right] &\text{(if $D<0$)}, \\
\left[\begin{smallmatrix} 1 & 1 \\ 1 & -1 \end{smallmatrix}\right]\left[\begin{smallmatrix} \sqrt{4^{-1}\Delta} & 0 \\ 0 & 1 \end{smallmatrix}\right]
 &\text{(if $D>0$)}
\end{cases}
\end{align*}
(cf.\ \cite[\S7]{SugiyamaTsuzuki2018}). From \cite[\S 9]{SugiyamaTsuzuki2018}, recall that the period integral of $\tilde \phi$ is defined to be 
\begin{align*}
\Pcal_{D}(\tilde \phi) = |D|^{1/2}\int_{\A_{\Q}^\times E^\times \bsl \A_{E}^\times}\tilde \phi(\iota_{\Delta}(\tau)R_{\Delta}^{-1})\,\d^\times \dot\tau \times \begin{cases} 1 & (\text{if $\chi_{D}(2)=0,1$}), \\
2^{-(1+\nu_2)/2}3(1+2^{-\nu_2})^{-1} & (\text{if $\chi_{D}(2)=-1$}).
\end{cases}
\end{align*}

\begin{prop} \label{PeriodL0} 
 Let $\phi$ be a Hecke-Maass form on $\SL_2(\Z)$ and $\tilde \phi$ its lift to the adelization $\GL_2(\A_\Q)$. Then for any $\Delta=f^2D$ with $D\in \Dcal$ and $f\in \N$, we have the equality $\Pcal_{\Delta}(\tilde \phi)=2^{\delta(D<0)}\PP_{D}(\phi)$. 
\end{prop}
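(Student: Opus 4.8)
The plan is to prove Proposition~\ref{PeriodL0} by directly unfolding the adelic integral $\Pcal_{\Delta}(\tilde\phi)$ and matching it term-by-term with the classical period $\PP_D(\phi)$ defined in \eqref{classicalPeriod1} and its real-place analogue. The essential mechanism is the passage from the adelic quotient $\A_\Q^\times E^\times\bsl\A_E^\times$ to a finite set of geodesics or CM-points on $\SL_2(\Z)\bsl\fH$, indexed by the narrow ideal class group of $E$. First I would recall from \cite[\S7,\S9]{SugiyamaTsuzuki2017} that the torus $T=\iota_\Delta(E^\times)$ is a maximal torus of $\GL_2$, and that the conjugation by $R_\Delta$ is engineered precisely so that $\iota_\Delta(\tau)R_\Delta^{-1}$ lands, at each place, in the region where $\tilde\phi$ is computed by its defining right-invariance under $Z(\A_\Q)\GL_2(\Q)\GL_2(\R)^+\GL_2(\hat\Z)$.

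The main technical step is the local-global decomposition of the homogeneous space. Using strong approximation for the idele class group and the chosen Haar measures $\d^\times\dot\tau=\d^\times\tau/\d^\times t$, I would write
\[
\A_\Q^\times E^\times\bsl\A_E^\times \;\cong\; \bigl(E^\times\bsl (\A_E^\times)^1\bigr)\big/\A_\Q^{\times,1},
\]
and then split the finite part from the archimedean part. The finite part produces a sum over representatives of the narrow class group, which is in bijection with $\Fcal(D)/\PSL_2(\Z)$; this is where the sum $\sum_{j=1}^h$ in \eqref{classicalPeriod1} and in the definition of $\PP_D(\phi)$ for $D>0$ emerges. At each finite prime the local integrand is supported on $T(\Q_p)\cap\GL_2(\Z_p)$-type cosets, and the normalization $\zeta_{E_v}(1)$ together with the factor $|D|^{1/2}$ arranges the volumes to be $1$, leaving no spurious local constants except at $p=2$.

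For the archimedean contribution I would treat the two cases separately. When $D<0$ the torus $T(\R)$ is anisotropic, $\A_\Q^\times\bsl(\A_E^\times)$ contributes a compact orbit, and the integral collapses to the point value $\phi(z_{Q_j})$ at the CM-point; the factor $2/w_D$ in \eqref{classicalPeriod1} and the choice $R_{\Delta,\infty}=\left[\begin{smallmatrix}\sqrt{|4^{-1}\Delta|}&0\\0&1\end{smallmatrix}\right]$ reproduce the correct measure-theoretic normalization, and here the discrepancy factor $2^{\delta(D<0)}=2$ appears. When $D>0$ the torus is split over $\R$, $T(\R)^\circ$ is a one-parameter subgroup, and $R_{\Delta,\infty}$ conjugates it to the diagonal, so that the orbit $\{g_{Q,\infty}\langle it\rangle\}$ traces out the geodesic $\Omega_Q$; the quotient by $\Gamma(Q_j)\cong U_D/\{\pm1\}$ is exactly the image of $E^\times$ in $T(\R)$, and the line element $\d t/t$ matches $|\d_{Q_j}z|$, giving $\PP_D(\phi)$ with $\delta(D<0)=0$.

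The step I expect to be the main obstacle is the careful bookkeeping of the local factors at the dyadic place $p=2$, which is why the definition of $\Pcal_D(\tilde\phi)$ carries the explicit correction $2^{-(1+\nu_2)/2}3(1+2^{-\nu_2})^{-1}$ precisely when $\chi_D(2)=-1$. I would need to verify that, after unfolding, this prefactor exactly cancels the anomalous volume of the local torus orbit at $2$ in the inert case, so that the final identity has no residual $2$-adic constant beyond the uniform factor $2^{\delta(D<0)}$. Establishing this matching rigorously requires computing the local orbital integral at $2$ against $W_2^{(\nu_2)}$, a computation of the same flavor as the one deferred to \S5 of the paper; I would quote the relevant local new-vector values from \cite[\S7]{SugiyamaTsuzuki2017} and check the cancellation place by place, the archimedean normalization being the source of the $2^{\delta(D<0)}$ factor and the finite primes (including the delicate $p=2$ case) contributing trivially.
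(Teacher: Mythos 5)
Your overall architecture (pass to the optimal embedding, decompose $\A_\Q^\times E^\times\bsl\A_E^\times/\widehat\cO_E^\times$ into narrow ideal classes, treat $D<0$ as CM points and $D>0$ as closed geodesics, and cancel the dyadic prefactor) is the same as the paper's, but as written the proposal has two concrete gaps in exactly the places where the content of the proposition lives. First, the bookkeeping of the constant $2^{\delta(D<0)}$ is misattributed. You assert that the finite places ``contribute trivially'' and that the archimedean normalization is the sole source of the factor $2$. In fact the paper's Lemma~\ref{PeriodL3} shows $\vol(\widehat\cO_E^\times;\d^\times\tau_\fin)=|f/2|_\infty\times 2$, so a global factor of $2$ enters from the finite adeles in \emph{both} signs of $D$; for $D<0$ it survives and gives $2\cdot\frac{2}{w_D}\sum_j\phi(z_{Q_j})=2\,\PP_D(\phi)$, while for $D>0$ it is absorbed because the norm-one torus coordinate $t$ acts on $\fH$ by $i\mapsto it^{2}$, so the induced line element is $2\,\d t/t$ against the geodesic parametrization and the extra $2$ cancels. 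Without tracking this you cannot get the asymmetric constant $2^{\delta(D<0)}$; your sketch would produce the same constant in both cases.

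Second, your proposed treatment of the inert dyadic place is off-target. The correction factor $2^{-(1+\nu_2)/2}3(1+2^{-\nu_2})^{-1}$ is not cancelled by ``computing the local orbital integral at $2$ against $W_2^{(\nu_2)}$'' (that computation belongs to the geometric side of the trace formula in the appendix, a different object entirely). What is actually needed is the identity
$$
\int_{\cO_{E_2}^\times}\tilde\phi(g\,\Psi_D(u)\gamma_{D,2})\,\d^\times u
=\tfrac{2}{3}\bigl(2^{(\nu_2+1)/2}+2^{(-\nu_2+1)/2}\bigr)\tilde\phi(g),
$$
which follows from exhibiting $\Psi_D(\cO_{E_2}^\times)\gamma_{D,2}$, modulo $\GL_2(\Z_2)$ on the right, as a complete set of representatives of $T(2)/\GL_2(\Z_2)$ and then invoking the Hecke eigenproperty of $\tilde\phi$ at $p=2$ (Lemma~\ref{PeriodL1} in the paper). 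The prefactor then cancels the eigenvalue and leaves the local volume $2$ consistent with Lemma~\ref{PeriodL3}. You correctly flagged this step as the obstacle, but the tool you propose for it would not produce the cancellation.
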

 Here is some detail of the arguments. Basically we follow \cite[\S6]{Popa}.
 Define a $\Q$-algebra embedding $\Psi_{D}:E \rightarrow {\rm M}_2(\Q)$ by 
$$
\Psi_{D}(\sqrt{D})=\left[\begin{smallmatrix} 0 & 2 \\ D/2 & 0 \end{smallmatrix}\right] \quad \text{ if $D\equiv 0 \pmod{4}$}
$$
 and by 
$$
\Psi_{D}(\sqrt{D})=\left[\begin{smallmatrix} (D+1)/2 & (D-1)/2 \\ -(D-1)/2 & -(D+1)/2 \end{smallmatrix} \right]=\left[\begin{smallmatrix} 1 & 1 \\ 1 & -1 \end{smallmatrix}\right]\left[\begin{smallmatrix} 0 & 1 \\ D & 0 \end{smallmatrix}\right]\left[\begin{smallmatrix} 1 & 1 \\ 1 & -1 \end{smallmatrix}\right]^{-1}
 \quad \text{if $D\equiv 1\pmod{4}$}. 
$$
Then it turns out that the embedding $\Psi_{D}$ is optimal of level $1$, i.e., $\Psi_D(E)\cap {\rm M}_2(\Z)=\Psi_D(\cO_{E})$ (\cite[\S 6.2]{Popa}).
For a rational matrix $\delta\in \GL_2(\Q)$ and a place $v$ of $\Q$, let $\delta_{v}$ denote the image of $\delta$ in $\GL_2(\Q_v)$. We write $\delta_\Q$ for the diagonal image of $\delta$ in $\GL_2(\A_\Q)$. Set $\delta_\fin=(\delta_p)_{p} \in \GL_2(\AA_{\Q, \fin})$.

\begin{lem}
\label{Period0712}	 Set $\Delta=f^2D$ with $f\in \N$. Then,
\begin{align*}
\Pcal_{\Delta}(\tilde \phi)
& = |D|^{1/2} \int_{\A_\Q^\times E^\times\bsl \A_E^\times} \tilde\phi(\Psi_{D}(\tau)\gamma_{D,2}\gamma_{D,\infty})\d^\times \dot \tau
\times\begin{cases} 1, \quad & (\chi_{D}(2)= 0,1), \\ 3(2^{(\nu_2+1)/2}+2^{(-\nu_2+1)/2})^{-1}, \quad & (\chi_{D}(2)=-1), \end{cases}
\end{align*}
where 
\begin{align*}
\gamma_{D,2}&=
\begin{cases} \left[\begin{smallmatrix} 1& 1 \\ 1& -1 \end{smallmatrix} \right]_2 & (\chi_{D}(2)=-1), \\ 
(1_2)_{2} & (\chi_{D}(2)=0, 1),
\end{cases} \\ 
\gamma_{D,\infty}&=\begin{cases}
\left[\begin{smallmatrix} \sqrt{|D|}/2& 0  \\ 0 &1 \end{smallmatrix} \right]_\infty^{-1} \quad (D\equiv 0 \pmod{4}), \\
\left[\begin{smallmatrix} 1& 1 \\ 1& -1 \end{smallmatrix} \right]_\infty
\left[\begin{smallmatrix} \sqrt{|D|} & 0  \\ 0 & 1 \end{smallmatrix} \right]_\infty^{-1}\left[\begin{smallmatrix} 1& 1 \\ 1 &-1 \end{smallmatrix} \right]_\infty^{-1} \quad (D\equiv 1\pmod{4}). 
\end{cases}
\end{align*}
\end{lem}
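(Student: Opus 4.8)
The plan is to reduce the asserted identity to a collection of purely local matrix computations, by transporting the embedding $\iota_\Delta$ to $\Psi_D$ and exploiting the invariance properties of $\tilde\phi$. Since $E=\Q(\sqrt D)$ is a quadratic field and $\Mat_2(\Q)$ is central simple over $\Q$, the Skolem--Noether theorem provides $b\in\GL_2(\Q)$ with $\Psi_D(\tau)=b\,\iota_\Delta(\tau)\,b^{-1}$ for all $\tau\in E$. First I would make $b$ explicit: using $\sqrt{4^{-1}\Delta}=(f/2)\sqrt D$, a direct computation shows that conjugation by $\diag(f,1)$ carries $\iota_\Delta$ to $\iota_D$, whence $b=\diag(f,1)$ when $D\equiv0\pmod4$ (where $\Psi_D=\iota_D$), and $b=\left[\begin{smallmatrix}1&1\\1&-1\end{smallmatrix}\right]\diag(f/2,1)$ when $D\equiv1\pmod4$; the overall scalar in $b$ is immaterial.

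With $b$ fixed, I would invoke the left $\GL_2(\Q)$-invariance (automorphy) of $\tilde\phi$ to rewrite the integrand as
\[
\tilde\phi\bigl(\iota_\Delta(\tau)R_\Delta^{-1}\bigr)=\tilde\phi\bigl(b^{-1}\Psi_D(\tau)\,b\,R_\Delta^{-1}\bigr)=\tilde\phi\bigl(\Psi_D(\tau)\,g\bigr),\qquad g:=b\,R_\Delta^{-1}\in\GL_2(\A_\Q).
\]
The goal then becomes a global decomposition $g=\Psi_D(\eta)\,(\gamma_{D,2}\gamma_{D,\infty})\,\kappa\,z$ with $\eta\in\A_E^\times$, $\kappa\in\GL_2(\hat\Z)\times\SO(2)$ and $z\in Z(\A_\Q)$. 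Granting this, $\Psi_D(\eta)$ is absorbed by the measure-preserving substitution $\tau\mapsto\tau\eta^{-1}$ on $\A_\Q^\times E^\times\bsl\A_E^\times$, while $\kappa$ and $z$ disappear by the right $\GL_2(\hat\Z)\times\SO(2)$-invariance and the trivial central character of $\tilde\phi$; what remains is exactly $\tilde\phi(\Psi_D(\tau)\gamma_{D,2}\gamma_{D,\infty})$, yielding the claimed equality of integrals. The two scalar prefactors coincide by the elementary identity
\[
2^{-(1+\nu_2)/2}\,3\,(1+2^{-\nu_2})^{-1}=3\,\bigl(2^{(\nu_2+1)/2}+2^{(-\nu_2+1)/2}\bigr)^{-1},
\]
so no new constant is produced.

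The core is the place-by-place analysis of $g=bR_\Delta^{-1}$. For $p\nmid2f$ one has $R_{\Delta,p}=1_2$ and $b\in\GL_2(\Z_p)$, so $g_p\in\GL_2(\Z_p)$ and $\eta_p$ may be taken trivial; this also confirms that $\eta$ is a genuine idele. For $p\mid f$ with $p\neq2$, the decisive input is that $\Psi_D$ is optimal of level $1$, i.e.\ $\Psi_D(E)\cap\Mat_2(\Z)=\Psi_D(\cO_E)$: the matrix $R_{\Delta,p}$ was designed precisely so that $\iota_\Delta(\cdot)R_{\Delta,p}^{-1}$ is the \emph{local} optimal embedding at $p$, the choice of $m_p$ cancelling the $p$-part of the conductor $f$; consequently $g_p$ lands in $\Psi_D(E_p^\times)\,\GL_2(\Z_p)\,Z(\Q_p)$ and contributes nothing. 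At the archimedean place I would compute $b_\infty R_{\Delta,\infty}^{-1}$ directly: when $D<0$ the elliptic torus $\Psi_D(E_\infty^\times)$ contains $Z(\R)\cdot\SO(2)$ and absorbs the compact direction, leaving $\gamma_{D,\infty}$, while when $D>0$ the split torus $\Psi_D(E_\infty^\times)$ has no compact direction to absorb a leftover reflection, so one may additionally need the even symmetry $\phi(-\bar\tau)=\phi(\tau)$ (invariance of $\tilde\phi$ under an element of $\GL_2(\R)$ of negative determinant) to discard the reflection carried by $\left[\begin{smallmatrix}1&1\\1&-1\end{smallmatrix}\right]$. At $p=2$ the factor $\gamma_{D,2}$ genuinely survives: in the split case $\chi_D(2)=1$ the matrix $\left[\begin{smallmatrix}1&1\\1&-1\end{smallmatrix}\right]_2$ has determinant $-2\notin\Z_2^\times$ and cannot be absorbed into $\GL_2(\Z_2)$, whereas at odd split places the same matrix is a unit and is absorbed.

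The hard part will be the consistent bookkeeping across the case distinctions ($D\equiv0,1\pmod4$; $\chi_D(2)\in\{0,\pm1\}$; and the two signs of $D$), and in particular the correct treatment of the non-unit, negative-determinant matrix $\left[\begin{smallmatrix}1&1\\1&-1\end{smallmatrix}\right]$, which is disposed of by the even symmetry at $\infty$ but persists as $\gamma_{D,2}$ at $2$, together with the precise cancellation of the conductor $f$ by the $m_p$. Once these local identities are tabulated, assembling them into the global decomposition of $g$ and reading off the result is routine.
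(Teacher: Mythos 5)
Your overall strategy is the same as the paper's: write $\Psi_D=\Ad(b)\circ\iota_\Delta$ with $b=\left[\begin{smallmatrix}1&1\\1&-1\end{smallmatrix}\right]\diag(f/2,1)$ for $D\equiv1\pmod 4$ (resp.\ $b=\diag(f,1)$ up to the centre for $D\equiv0\pmod 4$), pull the rational matrix out on the left by automorphy, and analyse $g=bR_\Delta^{-1}$ place by place. Two places where you work harder than necessary. At the finite places dividing $f$ the paper needs neither the optimality of $\Psi_D$ nor your substitution $\tau\mapsto\tau\eta^{-1}$ on the torus: with $m_p=f/2$ (resp.\ $f$) one computes directly that $g_p$ equals either $1_2$ or $\left[\begin{smallmatrix}1&1\\1&-1\end{smallmatrix}\right]_p$, hence lies in $\GL_2(\Z_p)$ outright for $p\neq2$, and no element of $\Psi_D(E_p^\times)$ ever appears. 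At infinity, since $\Psi_D(E_\infty^\times)$ and $Z(\R)\SO(2)$ consist of matrices of positive determinant, the reflection carried by $\left[\begin{smallmatrix}1&1\\1&-1\end{smallmatrix}\right]_\infty\in Z(\R){\rm O}(2)$ must be disposed of via the evenness of $\phi$ (right ${\rm O}(2)$-invariance of $\tilde\phi$) in \emph{both} cases $D<0$ and $D>0$, not only for $D>0$ as you suggest; the elliptic torus cannot absorb a determinant-negative element.

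The one step that would actually go wrong is your case analysis at $p=2$. The factor $\left[\begin{smallmatrix}1&1\\1&-1\end{smallmatrix}\right]_2$ in $g_2$ originates from the global conjugator $b$ (present whenever $D\equiv1\pmod4$), and it is cancelled by the matching factor built into $R_{\Delta,2}$ precisely in the split case $\chi_D(2)=1$; it therefore survives exactly when $\chi_D(2)=-1$. This is why the lemma has $\gamma_{D,2}=\left[\begin{smallmatrix}1&1\\1&-1\end{smallmatrix}\right]_2$ for $\chi_D(2)=-1$ and $\gamma_{D,2}=1_2$ for $\chi_D(2)=0,1$, and why the extra constant $3(2^{(\nu_2+1)/2}+2^{(-\nu_2+1)/2})^{-1}$ --- which your prefactor identity correctly matches --- occurs only in the inert case. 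Your sketch asserts the opposite, namely that the matrix survives when $\chi_D(2)=1$, so carrying out your ``routine tabulation'' as written would produce the wrong $\gamma_{D,2}$. Once this is corrected, the rest of your outline is consistent with the paper's proof.
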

\begin{proof}
Suppose $D\equiv 1\pmod{4}$. Then $D$ is square-free and hence we may take $m_p=f/2$. We have 
$$
\Psi_{D}=\Ad\left(\left[\begin{smallmatrix} 1& 1 \\ 1& -1 \end{smallmatrix} \right]\left[\begin{smallmatrix} f/2 & 0  \\ 0 & 1 \end{smallmatrix} \right]\right) \circ \iota_{\Delta}
$$
and
$$
R_{\Delta,p}=\begin{cases}
\left[\begin{smallmatrix} f/2 & 0  \\ 0 & 1 \end{smallmatrix} \right]_p \quad (\chi_{D}(p)=0,-1), \\
\left[\begin{smallmatrix} 1& 1 \\ 1& -1 \end{smallmatrix} \right]_p\left[\begin{smallmatrix} f/2 & 0 \\ 0 & 1 \end{smallmatrix} \right]_p \quad (\chi_{D}(p)=1),
\end{cases}
\qquad 
R_{\Delta,\infty}=\begin{cases}
\left[\begin{smallmatrix} f\sqrt{|D|}/2 & 0 \\ 0 & 1 \end{smallmatrix} \right]_\infty \quad (D<0), \\
\left[\begin{smallmatrix} 1 &1 \\ 1 &-1 \end{smallmatrix} \right]_\infty
\left[\begin{smallmatrix} f\sqrt{D}/2 & 0 \\ 0& 1 \end{smallmatrix} \right]_\infty \quad (D>0).
\end{cases}
$$
By these, 
\begin{align*}
\iota_{\Delta}(\tau)R_{\Delta}^{-1}
&=\left[\begin{smallmatrix} f/2 & 0 \\ 0 & 1 \end{smallmatrix} \right]_\Q^{-1}\left[\begin{smallmatrix} 1 &1 \\ 1 &-1 \end{smallmatrix} \right]_\Q^{-1}\, \Psi_{D}(\tau) \,
\left[\begin{smallmatrix} 1 &1 \\ 1 &-1 \end{smallmatrix} \right]_\Q \left[\begin{smallmatrix} f/2 & 0 \\ 0 & 1 \end{smallmatrix} \right]_\Q
\times \prod_{p<\infty} R_{\Delta,p}^{-1}\times R_{\Delta,\infty}^{-1}
\\
&=\gamma_{\Q} \Psi_D(\tau) \,g_{\fin}\,g_{\infty}
\end{align*}
with $\gamma_{\Q}=\left[\begin{smallmatrix} f/2 & 0 \\ 0 & 1 \end{smallmatrix} \right]_\Q^{-1}\left[\begin{smallmatrix} 1 &1 \\ 1 &-1 \end{smallmatrix} \right]_\Q^{-1}$ and 
\begin{align*}
g_{\fin}&:=\prod_{p<\infty} \left[\begin{smallmatrix} 1 &1 \\ 1 &-1 \end{smallmatrix} \right]_p \left[\begin{smallmatrix} f/2 & 0 \\ 0 & 1 \end{smallmatrix} \right]_p R_{\Delta,p}^{-1}, \qquad
g_{\infty}:=\left[\begin{smallmatrix} 1 &1 \\ 1 &-1 \end{smallmatrix} \right]_\infty \left[\begin{smallmatrix} f/2 & 0 \\ 0 & 1 \end{smallmatrix} \right]_\infty R_{\Delta,\infty}^{-1}.
\end{align*}
Since $\left[\begin{smallmatrix} 1 & 1 \\ 1 & -1 \end{smallmatrix}\right]_p \in \GL_2(\Z_p)$ if $p\not=2$ and $\chi_{D}(2)\in \{1,-1\}$, we have $g_{\fin} \in \gamma_{D,2}\GL_2(\hat \Z)$.
Let $Z$ be the center of $\GL_2$.
By 
$$
\left[\begin{smallmatrix} 1 & 1 \\ 1 & -1 \end{smallmatrix}\right]_\infty
=\sqrt{2} \left[\begin{smallmatrix} \cos(\pi/4) & -\sin(\pi/4) \\ \sin(\pi/4) & \cos(\pi/4) \end{smallmatrix}\right]_\infty \left[\begin{smallmatrix} 1 & 0 \\ 0 & -1 \end{smallmatrix}\right]_\infty\in Z(\R){\rm O}(2), 
$$
we have $g_\infty\in \gamma_{D,\infty}Z(\R){\rm O}(2)$. By the $(Z(\A_\Q)\GL_2(\Q),\bK)$-invariance of $\tilde \phi$, we have the conclusion. The case $D\equiv 0\pmod{4}$ is similar since $\iota_{\Delta} = \Ad([\begin{smallmatrix} 1 & 0 \\ 0 & f \end{smallmatrix}])\Psi_D$ and we may take $m_p=f$.
\end{proof}

\begin{lem} \label{PeriodL1}
Suppose $D\equiv 1\pmod {4}$ and $\chi_{D}(2)=-1$. Then 
\begin{align*}
\int_{u\in \cO_{E_2}^\times} \tilde \phi(g \Psi_{D}(u)\gamma_{D,2})\,\d^\times u&=\tfrac{2}{3} \sum_{\eta \in T(2)/\GL_2(\Z_2)}\tilde \phi(g \eta)
\\
&=\tfrac{2}{3}(2^{(\nu_2+1)/2}+2^{(-\nu_2+1)/2})\tilde \phi(g)
, \quad g\in \GL_2(\A_\Q),
\end{align*}
where $T(2)=\{h\in \Mat_2(\Z_2) \mid |\det h|_2=2^{-1}\}$. 
\end{lem}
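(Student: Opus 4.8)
The plan is to reduce the first equality to a computation of the fibres of the orbit map $u\mapsto \Psi_{D}(u)\gamma_{D,2}\GL_2(\Z_2)$, and to obtain the second equality from the Hecke eigenproperty of $\tilde\phi$ at the place $2$. Since $\tilde\phi$ is right $\GL_2(\hat\Z)$-invariant, the integrand $u\mapsto \tilde\phi(g\Psi_D(u)\gamma_{D,2})$ depends on $u$ only through the lattice $L_u:=\Psi_D(u)\gamma_{D,2}\Z_2^2\subseteq\Q_2^2$. The optimality of $\Psi_D$ gives $\Psi_D(\cO_{E_2})\subseteq\Mat_2(\Z_2)$, so $\Psi_D(u)\gamma_{D,2}\in\Mat_2(\Z_2)$; moreover $\det(\Psi_D(u)\gamma_{D,2})=-2\,\Norm(u)$ has $2$-adic valuation $1$ because $\Norm(u)\in\Z_2^\times$. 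Hence $\Psi_D(u)\gamma_{D,2}\in T(2)$ and $L_u$ is one of the three index-$2$ sublattices of $\Z_2^2$, which are exactly the elements of $T(2)/\GL_2(\Z_2)$. The integral therefore collapses to
\[
\int_{\cO_{E_2}^\times}\tilde\phi(g\Psi_D(u)\gamma_{D,2})\,\d^\times u=\sum_{\eta\in T(2)/\GL_2(\Z_2)}\vol\bigl(\{u\in\cO_{E_2}^\times:\ L_u=\eta\Z_2^2\}\bigr)\,\tilde\phi(g\eta),
\]
and it remains to identify these fibres.

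I would describe the fibres through the stabilizer $H:=\{u\in\cO_{E_2}^\times:\ L_u=\gamma_{D,2}\Z_2^2\}$. Since $\Stab_{\GL_2(\Q_2)}(\gamma_{D,2}\Z_2^2)=\gamma_{D,2}\GL_2(\Z_2)\gamma_{D,2}^{-1}$, membership $u\in H$ is equivalent to $\gamma_{D,2}^{-1}\Psi_D(u)\gamma_{D,2}\in\GL_2(\Z_2)$. Writing $u=a+b\sqrt D$ and using $\gamma_{D,2}^{-1}\Psi_D(\sqrt D)\gamma_{D,2}=\left[\begin{smallmatrix}0&1\\ D&0\end{smallmatrix}\right]$, this conjugate equals $a\,1_2+b\left[\begin{smallmatrix}0&1\\ D&0\end{smallmatrix}\right]$, which lies in $\GL_2(\Z_2)$ if and only if $a,b\in\Z_2$ (recall $D\in\Z_2^\times$). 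Thus $H=\cO_{E_2}^\times\cap\Z_2[\sqrt D]$. Here the hypothesis $\chi_D(2)=-1$ enters: $2$ is inert, so $\cO_{E_2}/2\cO_{E_2}=\FF_4$ and $\sqrt D$ reduces to the unique square root of $\bar D=1$ in $\FF_4$, namely $1$; hence every element of $H$ reduces into $\FF_2^\times=\{1\}$, giving $H=1+2\cO_{E_2}$. Consequently $[\cO_{E_2}^\times:H]=\#\FF_4^\times=3$, matching the number of target cosets, and since the fibres of $u\mapsto L_u$ are exactly the cosets of $H$, the orbit map induces a bijection $\cO_{E_2}^\times/H\cong T(2)/\GL_2(\Z_2)$. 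Each $\eta$ is therefore hit by a single $H$-coset, of measure $\vol(H)=\tfrac13\vol(\cO_{E_2}^\times)$.

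It then remains to compute $\vol(\cO_{E_2}^\times)$ and to settle the second equality. A direct evaluation with the normalization of $\d^\times u$ at the inert place $2$ gives $\vol(\cO_{E_2}^\times)=2$, whence each fibre has measure $\tfrac23$ and the first equality follows. For the second equality I would use that $\sum_{\eta\in T(2)/\GL_2(\Z_2)}\tilde\phi(g\eta)$ is the adelic form of the classical Hecke operator at $p=2$ applied to $\tilde\phi$; as $\tilde\phi$ corresponds to the new vector with $A_2(\phi)=\diag(2^{\nu_2/2},2^{-\nu_2/2})$, this sum equals $2^{1/2}\,\tr A_2(\phi)\,\tilde\phi(g)=(2^{(\nu_2+1)/2}+2^{(-\nu_2+1)/2})\,\tilde\phi(g)$, in agreement with the Hecke relation \eqref{pHecke} at $p=2$.

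The main obstacle is the fibre analysis: one must use both the inertness at $2$ and the optimality of $\Psi_D$ to pin down $H=1+2\cO_{E_2}$, which is precisely what guarantees that all three cosets in $T(2)/\GL_2(\Z_2)$ occur, and with equal measure. Once this is in place, the remaining volume bookkeeping and the identification of the Hecke eigenvalue are routine.
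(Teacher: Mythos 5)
Your proposal is correct, and it reaches the same structural conclusion as the paper --- the integrand is constant on three equal-volume cosets of a subgroup of $\cO_{E_2}^\times$ which biject onto $T(2)/\GL_2(\Z_2)$, each of volume $\tfrac23$ --- but it gets there by a genuinely different route. The paper works in explicit coordinates: it writes $D=5t^2$, invokes the explicit coset decomposition $T=T^+\sqcup\left[\begin{smallmatrix}1&1\\5&1\end{smallmatrix}\right]T^+\sqcup\left[\begin{smallmatrix}3&1\\5&3\end{smallmatrix}\right]T^+$ from \cite[Lemma 7.2]{SugiyamaTsuzuki2017}, computes the volume $\zeta_{E_2}(1)\int dx\,dy=\tfrac23$ directly, and then checks by hand that the three resulting representatives are integral of determinant-valuation $1$ and pairwise inequivalent, hence exhaust $T(2)/\GL_2(\Z_2)$. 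You instead identify the stabilizer $H=\{u:\ \gamma_{D,2}^{-1}\Psi_D(u)\gamma_{D,2}\in\GL_2(\Z_2)\}=\cO_{E_2}^\times\cap\Z_2[\sqrt D]$ and pin it down as $1+2\cO_{E_2}$ via reduction to $\FF_4$ (using that $\sqrt D$ reduces to $1$), so that $[\cO_{E_2}^\times:H]=\#\FF_4^\times=3$ and the orbit map is a bijection onto the three index-$2$ sublattices by a cardinality count. Your version buys a cleaner conceptual explanation of \emph{why} all three classes of $T(2)/\GL_2(\Z_2)$ occur with equal weight (it is exactly the inertness of $2$), and it dispenses with the cited coset representatives; the paper's version has the advantage that the volume $\tfrac23$ falls out of the same computation, whereas you assert $\vol(\cO_{E_2}^\times)=2$ without showing it (it is true, and is in effect verified in Lemma~\ref{PeriodL3}). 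Two small points you should make explicit: (i) the identification $H=1+2\cO_{E_2}$ needs the reverse inclusion $1+2\cO_{E_2}\subseteq\Z_2[\sqrt D]$, which holds because $2\cdot\tfrac{1+\sqrt D}{2}\in\Z_2[\sqrt D]$; your stated reasoning only gives $H\subseteq 1+2\cO_{E_2}$, and equality is what the index count uses; (ii) in passing from ``$\gamma_{D,2}^{-1}\Psi_D(u)\gamma_{D,2}\in\Mat_2(\Z_2)$'' to ``$\in\GL_2(\Z_2)$'' you should note that the determinant $\Norm(u)$ is a unit, so integrality suffices. Neither is a real gap.
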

\begin{proof} From definition, we have
$$
\Psi_D(a+b\sqrt{D})\gamma_{D,2}=\gamma_{D,2}\left[\begin{smallmatrix} a & b \\ Db & a \end{smallmatrix} \right]
$$
for $a,b\in \Q_2$. Since $D\equiv 1 \pmod{4}$ and $\chi_{D}(2)=-1$, we can set $D=5t^{2}$ with $t\in \Z_2^\times$. Thus
\begin{align*}
\Psi_D(a+bt^{-1}\sqrt{D})\gamma_{D,2}=\gamma_{D,2}\left[\begin{smallmatrix} t^{-1} & 0 \\ 0 & 1 \end{smallmatrix} \right]
\left[\begin{smallmatrix} a & b \\ 5b & a \end{smallmatrix} \right] 
\left[\begin{smallmatrix} t &0 \\ 0& 1 \end{smallmatrix} \right]
\end{align*}
and $\cO_{E_2}^\times=\{u=x+\sqrt{5}y|\,x,y\in 2^{-1}\Z_2,\,x-y\in \Z_2,\,x^2-5y^2\in \Z_2^\times\}$. From this, 
\begin{align}
&\int_{u\in \cO_{E_2}^\times} \tilde \phi(g \Psi_D(u)\gamma_{D,2})\,\d^\times u
 \notag
\\
= & \int_{\substack{(x,y)\in (2^{-1}\Z_2)^2 \\ x-y\in \Z_2,\,x^2-5y^2\in \Z_2^\times}} \tilde \phi(g \gamma_{D,2}\left[\begin{smallmatrix} t^{-1} & 0 \\ 0 & 1 \end{smallmatrix} \right]
\left[\begin{smallmatrix} x & y \\ 5y & x \end{smallmatrix} \right] 
)\,\zeta_{E_2}(1)\,\d x \,\d y. \label{2017/12/17-1}
 \end{align}
Set $T=\{
\left[\begin{smallmatrix} x & y \\ 5y & x \end{smallmatrix} \right] \in \GL_2(\Q_2)|\,\tau=x+\sqrt{5}y\in \cO_{E_2}^\times\}$ and $T^{+}=\GL_2(\Z_2)\cap T$. From \cite[Lemma 7.2]{SugiyamaTsuzuki2018}, we have $T^{+}=\{
\left[\begin{smallmatrix} x & y \\ 5y & x \end{smallmatrix} \right]|x,y\in \Z_2,x^2-5y^2\in \Z_2^\times\}$, 
\begin{align}
T=T^{+}\sqcup \left[\begin{smallmatrix} 1 & 1 \\ 5 & 1 \end{smallmatrix}\right]T^{+}\sqcup \left[\begin{smallmatrix} 3 & 1 \\ 5 & 3 \end{smallmatrix}\right] T^{+}.
 \label{2017/12/17-2}
\end{align}
By the right $\GL_2(\Z_2)$-invariance of $\tilde \phi$, the integral \eqref{2017/12/17-1} becomes the product of 
\begin{align*}
\zeta_{E_2}(1)\int_{\substack{(x,y)\in \Z_2^2 \\ x-y\in \Z_2^\times}}\d x\,\d y=(1-2^{-2})^{-1}\{1\times 1-(\tfrac{1}{2})^2-(\tfrac{1}{2})^2\}=\tfrac{2}{3}
\end{align*}
and
\begin{align*}
&\tilde \phi(g \gamma_{D,2}\left[\begin{smallmatrix} t^{-1} & 0 \\ 0 & 1 \end{smallmatrix} \right])
+\tilde \phi(g \gamma_{D,2}\left[\begin{smallmatrix} t^{-1} & 0 \\ 0 & 1 \end{smallmatrix} \right]\left[\begin{smallmatrix} 1 & 1 \\ 5 & 1 \end{smallmatrix}\right])
+\tilde \phi(g \gamma_{D,2}\left[\begin{smallmatrix} t^{-1} & 0 \\ 0 & 1 \end{smallmatrix} \right]\left[\begin{smallmatrix} 3 & 1 \\ 5 & 3 \end{smallmatrix}\right])
\\
=&
\tilde \phi(g \gamma_{D,2})
+\tilde \phi(g \gamma_{D,2}\left[\begin{smallmatrix} t^{-1} & 0 \\ 0 & 1 \end{smallmatrix} \right]\left[\begin{smallmatrix} 1 & 1 \\ 5 & 1 \end{smallmatrix}\right]
)
+\tilde \phi(g \gamma_{D,2}\left[\begin{smallmatrix} t^{-1} & 0 \\ 0 & 1 \end{smallmatrix} \right]\left[\begin{smallmatrix} 3 & 1 \\ 5 & 3 \end{smallmatrix}\right]
)
\end{align*}
The following three elements belongs to the set $T(2)$: 
$$
\gamma_{D,2}, \quad \tfrac{1}{2}\gamma_{D,2}\left[\begin{smallmatrix} t^{-1} & 0 \\ 0 & 1 \end{smallmatrix} \right]\left[\begin{smallmatrix} 1 & 1 \\ 5 & 1 \end{smallmatrix}\right]
\quad 
\tfrac{1}{2}\gamma_{D,2}\left[\begin{smallmatrix} t^{-1} & 0 \\ 0 & 1 \end{smallmatrix} \right]\left[\begin{smallmatrix} 3 & 1 \\ 5 & 3 \end{smallmatrix}\right]
$$
From the disjoint decomposition \eqref{2017/12/17-2}, these elements belong to  different right $\GL_2(\Z_2)$-orbits. Since $\#(T(2)/\GL_2(\Z_2))=3$, these elements actually form a complete set of representatives of the orbit space $T(2)/\GL_2(\Z_2)$. This completes the proof of the first equality. The second equality follows from the fact that $\tilde \phi$ is a Hecke eigenform with eigenvalue $(2^{(1+\nu_2)/2}+2^{(1-\nu_2)/2})$ at the place $2$. 
\end{proof}
Recall that the measure $\d^\times \tau=\otimes_{v}\d^\times \tau_v$ on $\A_E^\times$ is given as $\d^\times \tau=\otimes_{v}\zeta_{E_v}(1) \frac{\d a_v \d b_v}{|a_v^2-(\Delta/4)b_v^2|_v}$ for $\tau_v = a_v+\sqrt{4^{-1}\Delta} \,b_v\,(a_v,b_v\in \Q_v)$. 

\begin{lem} \label{PeriodL3} 
We have $\vol(\widehat\cO_{E}^\times;\d^\times \tau_\fin)=|f/2|_\infty \times 2$.
\end{lem}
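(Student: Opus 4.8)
The plan is to factor the global volume into a product of purely local contributions and to evaluate each factor by comparing two $\Z_p$-lattices in $E_p$ through their discriminants. Write $\omega=\sqrt{4^{-1}\Delta}$, so that in the coordinates $\tau_p=a_p+\omega b_p$ the additive measure $\d\lambda_p(a_p)\,\d\lambda_p(b_p)$ assigns volume $1$ to the lattice $\Lambda_p=\Z_p+\Z_p\omega$. Since $\widehat\cO_{E}^\times=\prod_{p<\infty}\cO_{E_p}^\times$ and $\d^\times\tau_\fin=\otimes_{p<\infty}\d^\times\tau_p$, I would begin from $\vol(\widehat\cO_{E}^\times;\d^\times\tau_\fin)=\prod_{p<\infty}\vol(\cO_{E_p}^\times;\d^\times\tau_p)$, the product being finite because almost all factors are $1$.

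The first step is to reduce the multiplicative volume at $p$ to an additive one. On $\cO_{E_p}^\times$ the module $|\nr_{E_p/\Q_p}(\tau)|_p=|a_p^2-4^{-1}\Delta\,b_p^2|_p$ equals $1$, so there $\d^\times\tau_p=\zeta_{E_p}(1)\,\d\lambda_p(a_p)\,\d\lambda_p(b_p)$, whence $\vol(\cO_{E_p}^\times;\d^\times\tau_p)=\zeta_{E_p}(1)\,\vol^+(\cO_{E_p}^\times)$ with $\vol^+$ normalized by $\vol^+(\Lambda_p)=1$. The ratio $\vol^+(\cO_{E_p}^\times)/\vol^+(\cO_{E_p})$ equals $1-q_p^{-1}$ in the non-split case ($q_p$ the residue cardinality) and $(1-p^{-1})^2$ in the split case; in each case this is exactly $\zeta_{E_p}(1)^{-1}$, so the two factors cancel and $\vol(\cO_{E_p}^\times;\d^\times\tau_p)=\vol^+(\cO_{E_p})$.

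Next I would compute $\vol^+(\cO_{E_p})$ as a ratio of covolumes. Using the trace form on the basis $\{1,\omega\}$ one gets $\Disc(\Lambda_p)=\det\left[\begin{smallmatrix} 2 & 0 \\ 0 & \Delta/2\end{smallmatrix}\right]=\Delta$, while $\Disc(\cO_{E_p})=D$ up to a $p$-adic unit. Since the ratio of covolumes of two full $\Z_p$-lattices is the square root of the ratio of the $p$-adic absolute values of their discriminants,
\[
\vol^+(\cO_{E_p})=\frac{|\Disc(\cO_{E_p})|_p^{1/2}}{|\Disc(\Lambda_p)|_p^{1/2}}=\frac{|D|_p^{1/2}}{|Df^2|_p^{1/2}}=|f|_p^{-1}.
\]
Multiplying over all finite $p$ and invoking the product formula $\prod_{p<\infty}|f|_p^{-1}=|f|_\infty$ then gives $\vol(\widehat\cO_{E}^\times;\d^\times\tau_\fin)=|f|_\infty=2\,|f/2|_\infty$, the asserted value.

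The step I expect to be most delicate is the prime $p=2$. When $\Delta\equiv 1\pmod 4$ the element $\omega=\sqrt{4^{-1}\Delta}$ is not integral, so $\Lambda_2\not\subseteq\cO_{E_2}$ and ``$\vol^+(\cO_{E_2})$'' can no longer be read off as an honest index of sublattices; one must genuinely interpret it as a ratio of covolumes, which is precisely what the discriminant comparison supplies and why that formulation is preferable to counting cosets. I would also verify $\Disc(\cO_{E_p})=D$ at ramified places, including $p=2$, and recheck the unit-density computation in the ramified and split cases there. It is exactly this $2$-adic bookkeeping, namely the identity $|f|_2^{-1}=2\,|f/2|_2^{-1}$, that is responsible for the isolated factor $2$ in the stated form $|f/2|_\infty\times 2$ of the answer.
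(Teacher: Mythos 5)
Your argument is correct, and it reaches the stated value $|f/2|_\infty\times 2=|f|_\infty$ by a genuinely different route from the paper. The paper first performs the explicit substitution $\tau=x+\sqrt{D}y$ (absorbing $\prod_p|f/2|_p^{-1}=|f/2|_\infty$ via the product formula into a global prefactor) and then evaluates $\vol(\cO_{E_p}^\times)$ in the new coordinates place by place, with the only nontrivial work at $p=2$: there it identifies $\cO_{E_2}$ explicitly in each splitting type and, in the inert case, reuses the coset decomposition of $T(2)$ from Lemma~\ref{PeriodL1} to get the local factor $2$. You instead stay in the original coordinates, cancel $\zeta_{E_p}(1)$ against the unit density $\vol^+(\cO_{E_p}^\times)/\vol^+(\cO_{E_p})=\zeta_{E_p}(1)^{-1}$, and compute $\vol^+(\cO_{E_p})=\bigl(|\Disc(\cO_{E_p})|_p/|\Disc(\Lambda_p)|_p\bigr)^{1/2}=|f|_p^{-1}$ by a discriminant comparison, finishing with one application of the product formula. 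What your approach buys is uniformity: there is no dyadic case analysis at all, the covolume formulation correctly sidesteps the fact that $\Lambda_2\not\subseteq\cO_{E_2}$ when $\Delta\equiv 1\pmod 4$, and the factor $2$ appears transparently as $|f|_\infty/|f/2|_\infty$. It also silently covers the split dyadic case $\chi_D(2)=1$, where the local volume is again $2$ rather than $1$; the paper's write-up passes over this case (its displayed claim $\vol(\cO_{E_p}^\times)=1$ ``unless $p=2$ and $\chi_D(2)\in\{0,-1\}$'' is not literally accurate there, since $\Z_2+\sqrt{D}\Z_2$ has index $2$ in the maximal order), so your uniform treatment is, if anything, slightly more complete. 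The price is that you invoke the general lattice--discriminant--covolume dictionary, whereas the paper's computation is entirely elementary and self-contained.
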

\begin{proof} 
Since $\Delta=f^2D$, by a change of variables and the product formula $\prod_{v}|f/2|_v=1$, we have $\d^\times \tau_{\fin} = |f/2|_\infty \times \otimes_{p} \zeta_{E_p}(1) \frac{\d x_p\d y_p}{|x^2_p-Dy_p^2|_p}$ if $\tau=x+\sqrt{D}y$. We have $\cO_{E_p}=\Z_p+\sqrt{D}\Z_p$ and $\vol(\cO_{E_p}^\times)=1$ unless $p=2$ and $\chi_D(2)\in \{0,-1\}$. If $p=2$ and $\chi_D(2)=0$, then $\go_{E_p} =\Z_2+\sqrt{4^{-1}D}\Z_2$ and $\vol(\go_{E_p}^\times)=\int_{x, 2y \in \Z_2, x^2-Dy^2\in \Z_2^\times}(1-2^{-1})^{-1}dxdy=2$. If $p=2$ and $\chi_{D}(2)=-1$, then $\go_{E_p}= \Z_2+\frac{\sqrt{D}+1}{2}\Z_2$ and $\vol(\go_{E_p}^\times)=3\times \frac{2}{3}=2$ from the proof of Lemma~\ref{PeriodL1}. 
\end{proof}

Let $D>0$ and fix a complete set of representatives $\{\fa_j\}_{j=1}^h$ of the narrow ideal class group of $E$. Let $\a_j\in \A_{E,\fin}^\times$ be the idele corresponding to $\fa_j$. Let $\e_0>1$ be the fundamental unit of $E$. Set $\e=\e_0$ if $\nr(\e_0)=1$ and $\e=\e_0^2$ if $\nr(\e_0)=-1$. Then by the decomposition 
$$
\A_\Q^\times E^\times \bsl \A^\times_E/\hat \cO_E^\times=E^\times \bsl \A_E^1/\hat\cO_E^\times=\bigsqcup_{j=1}^{h}\a_j\cdot \e^{\Z}\bsl \tilde E_\infty^{1}
$$
with $\tilde E_{\infty}^1=\{\tau \in E_\infty^\times/\{\pm 1\} | \nr_{E/\Q}(\tau)=1\}$, using Lemmas~\ref{Period0712}, \ref{PeriodL1} and \ref{PeriodL3}, we have the following:
\begin{align*}
\Pcal_{\Delta}(\tilde\phi)&=2\times 
D^{1/2} \sum_{j=1}^{h} \int_{\e^{\Z}\bsl \tilde E_\infty^1} \tilde\phi(\Psi_{D}(\alpha_{j}) \Psi_{D}(\tau_\infty) \gamma_{D,\infty}) \,|f/2|_\infty\d^\times \dot \tau_\infty,
\end{align*} 
where $\d^\times \dot \tau_\infty$ is the measure on $\e^\Z\bsl \tilde E_\infty^1$ derived from the measure $\d^\times\dot \tau$ on $E^\times \A_\Q^\times\bsl \A_E^\times$. Let us fix a decomposition of the element $\Psi_{D}(\alpha_{j})$ along $\GL_2(\A_\Q)=\GL_2(\Q)\GL_2(\R)^{+}\GL_2(\hat \Z)$. This decomposition is explicitly obtained from an oriented $\Z$-basis of $\fa_j$ in the following way. Since $\Psi_D$ is optimal, there exists a $\Z$-basis $\{\omega_1,\omega_2\}$ of $\cO_{E}$ such that 
\begin{align}
[\tau \omega_1,\tau \omega_2]=[\omega_1,\omega_2]\Psi_D(\tau)
 \label{2017/12/17-3}
\end{align} 
for all $\tau \in E$. Let $\{\eta_1,\eta_2\}$ be a $\Z$-basis of $\fa_j$ (which is a free $\Z$-module of rank $2$). Since $\{\eta_1,\eta_2\}$ and $\{\omega_1,\omega_2\}$ are $\Q$-bases of $E$, we have a rational matrix $\gamma_j \in \GL_2(\Q)$ such that 
\begin{align}
[\eta_1,\eta_2]=[\omega_1,\omega_2]\gamma_j.
 \label{2017/12/17-5}
\end{align}
We suppose $\det (\gamma_{j,\infty})>0$. For any prime $p$, we have two $\Z_p$-bases $\{\eta_1,\eta_2\}$ and $\{\alpha_{j,p}\omega_1,\alpha_{j,p}\omega_2\}$ of the $\Z_p$-module $\fa_j \otimes \Z_p\cong \Z_p\oplus \Z_p$. Thus, there exists a matrix $k_{j,p}\in \GL_2(\Z_p)$ such that 
\begin{align}
[\alpha_{j,p}\omega_1,\alpha_{j,p}\omega_2]=[\eta_1,\eta_2] k_{j,p}\quad (\forall\, p).
 \label{2017/12/17-4}
\end{align}
From \eqref{2017/12/17-3}, \eqref{2017/12/17-4} and \eqref{2017/12/17-5}, 
\begin{align*}
[\omega_1,\omega_2]\Psi_{D}(\alpha_j)=[\omega_1,\omega_2]\gamma_j\,k_{j}
\end{align*}
with $k_j=(k_{j,p})_{p<\infty}\in \GL_2(\hat \Z)$. Thus, $\Psi_{D} (\alpha_j) =\gamma_j\,\gamma_{j,\infty}^{-1}\,k_j$ holds in the adelization $\GL_2(\A_\Q)$, where $\gamma_{j,\infty}$ is the image of $\gamma_j$ in $\GL_2(\R)^{+}$. 
Thus, 
\begin{align*}
\Pcal_{\Delta}(\tilde\phi) = 2 \times D^{1/2} \sum_{j=1}^{h} \int_{\e^{\Z}\bsl \tilde E_\infty^1} \tilde\phi(\gamma_{j,\infty}^{-1} \Psi_{D}(\tau_\infty) \gamma_{D,\infty})
\,|f/2|_\infty d^\times \dot \tau_\infty.
\end{align*}
Now set $\tau_{\infty}=x\oplus \delta y\in E_\infty^\times$ with $x,y \in \R$ (this is an expression in the quotient ring $\R[X]/(X^2-D)=\R\oplus \delta\R$ with $\delta$ being the class of $X$). Then the connected component $(E_\infty^\times)^0$ of $1$ in $E_\infty^\times$ is $\{\tau_\infty\in E_\infty|x^2-Dy^2>0, x-\sqrt{D}\,y>0\,\}$ and $E^{1}_\infty=\{\tau_\infty=x\oplus \delta y|\,x^2-Dy^2=1\}$. For $\tau_\infty=x\oplus \delta y\in E_\infty^\times$, set $t=x+\sqrt{D}y\in \R^\times$. Then $t$ (restricted to $E^1_\infty$) is a coordinate function on $E^{1}_\infty$, giving a group isomorphism $E^{1}_\infty\cong \R^\times$. Note $\tilde E_\infty^1=E_\infty^1/\{\pm 1\}\cong (E_\infty^1)^0$. We have the direct product decomposition $(E_\infty^\times)^0=\R_{+}^\times\,\tilde E_\infty^1$ as $\tau_{\infty}=r\tau_{\infty}^1$ with $r>0$ and $\tau_\infty^1\in (E_\infty^1)^0$. A computation reveals 
$$
\frac{\d r}{r} \wedge \frac{\d t}{t}=\sqrt{D}\frac{\d x\wedge \d y}{x^2-Dy^2},$$which shows that the measure $\d^1 \tau_{\infty}$ on $\tilde E^1_\infty$ derived from $|f/2|_\infty \d^\times \tau_\infty$ is $\tfrac{1}{\sqrt{D}}\frac{\d t}{|t|}$. 

Suppose $D\equiv 1\pmod{4}$. Then 
\begin{align*}
\Psi_{D}(\tau_\infty) \gamma_{D,\infty}=\gamma_{D,\infty} \left[\begin{smallmatrix} t & 0 \\ 0 & t^{-1} \end{smallmatrix} \right]
\end{align*}
for any $\tau_\infty=x\oplus \delta y \in \tilde E^1_\infty$ with $t=x+\sqrt{D}y$. Hence by the definition of $\tilde \phi$, 
$$
\tilde \phi(\gamma_{j,\infty}^{-1} \Psi_{D}(\tau_\infty)\gamma_{D,\infty})=\phi(\gamma_{j,\infty}^{-1}\gamma_{D,\infty}(it^2))
$$
and we obtain the expression
\begin{align*}
\Pcal_{\Delta}(\tilde \phi)= 2D^{1/2}\sum_{j=1}^{h} \int_{\e^{\Z}\bsl \tilde E_\infty^{1}}\phi(\gamma_{j,\infty}^{-1}\gamma_{D, \infty}(it^2))\,\tfrac{1}{\sqrt{D}}\tfrac{\d t}{|t|}.
\end{align*} Let $Q_D(x,y)$ be the element of $\Fcal(D)$ such that $Q_D(x,1)=0$ has the roots $\gamma_{D,\infty}(0)=\frac{-\sqrt{D}+1}{\sqrt{D}+1}$ and $\gamma_{D,\infty}(i\infty)=\frac{-\sqrt{D}-1}{\sqrt{D}-1}$, or explicitly 
$$
Q_D(x,y)=\tfrac{D-1}{4}x^2+\tfrac{D+1}{2}xy+\tfrac{D-1}{4}y^2. 
$$
Let $Q_j\in \Fcal(D)$ be an element that belongs to the same $\Q^\times$-homothety class as $Q_{D}\gamma_{j}$. Then $Q_j(x,1)=0$ has the two roots $\gamma_{j,\infty}^{-1}\gamma_{D,\infty}(0)$ and $\gamma_{j,\infty}^{-1}\gamma_{D,\infty}(i\infty)$ with $\det(\gamma_{j,\infty}^{-1}\gamma_{D,\infty})>0$. Hence
$$
\Pcal_{\Delta}(\tilde \phi)=\sum_{j=1}^{h} \int_{\Gamma(Q_j)\bsl \Omega_{Q_j}} \phi(z)\,|d_{Q_j}z|
$$ 
as desired. The case $D\equiv 0 \pmod {4}$ is similar.

Next, we consider the case $D<0$. Let $\{\fa_j\}_{j=1}^{h}$ be a complete set of representatives of the ideal class group of $E$ and $\a_j\in \A_{E,\fin}^{\times}$ the idele corresponding to $\fa_j$. Then 
\begin{align*}
\A_\Q^{\times} E^\times \bsl \A_{E}^\times/\widehat \cO_E^\times=\bigsqcup_{j=1}^{h}\a_j\cdot W_D\bsl E^1_\infty,
\end{align*}
where $W_D$ is the set of roots of unity in $E$. We have that $E_\infty^1=\{x_1+i\sqrt{|D|}y_1 \mid x_1^2+|D|y_1^2=1\}$ is isomorphic to the unit circle by $x_1=\cos\theta$, $y_1=\sqrt{|D|}^{-1}\sin \theta\,(0\leq \theta<2\pi)$. By the direct product decomposition $E_\infty^\times=\R_+^\times\,E_\infty^1$ with $\tau_\infty=r(\cos\theta+i\sin \theta)$, the measure $\d\tau_\infty$ is decomposed as
\begin{align*}
\zeta_{\C}(1)\frac{\d x\,\d y}{x^2+|D|y^2}=\frac{1}{\sqrt{|D|}}\frac{\d r}{r}\,\frac{\d \theta}{\pi}, 
\end{align*}
from which the measure $\d^1\tau_\infty$ induced on $E_\infty^1$ is seen to be equal to $\frac{1}{\sqrt{|D|}}\frac{\d \theta}{\pi}$. Suppose $D\equiv 1\pmod{4}$. A computation shows the equality 
\begin{align}
\Psi_{D}(\tau_\infty) \gamma_{D,\infty}=\gamma_{D,\infty} 
\left[\begin{smallmatrix}x & -\sqrt{|D|}y \\ \sqrt{|D|}y & x \end{smallmatrix}\right] \in \gamma_{D,\infty}\,Z(\R)^0\SO(2), \quad \tau_\infty=x+\sqrt{D}y\in E_\infty^\times.
 \label{PsiKinfty}
\end{align}
Let $\gamma_j\in \GL_2(\Q)$ and $\gamma_{j,\infty}\in \GL_2(\R)^+$ be as above. Since $\tilde\phi$ is right $Z(\R)\SO(2)$-invariant,  
\begin{align*}
\Pcal_{\Delta}(\tilde \phi)
&=2|D|^{1/2}\sum_{j=1}^{h} \int_{W_D\bsl E_\infty^1}
\tilde \phi(\gamma_{j,\infty}^{-1}\gamma_{D,\infty})\,\d^1\tau_\infty
\\
&=2|D|^{1/2} \sum_{j=1}^{h}\tilde \phi(\gamma_{j,\infty}^{-1}\gamma_{D,\infty})
\,\frac{1}{\# W_D} \int_{0}^{2\pi} \frac{1}{\sqrt{|D|}} \frac{\d \theta}{\pi}
=2\times\frac{2}{w_D} \sum_{j=1}^{h}\tilde \phi(\gamma_{j,\infty}^{-1}\gamma_{D,\infty}).
\end{align*}
For the element $Q_{D}(x,y)=\frac{D-1}{4}x^2+\frac{D+1}{2}xy+\frac{D-1}{4}y^2$ of $\Fcal(D)$, the quadratic equation $Q_D(z,1)=0$ has the unique root $\gamma_{D,\infty}(i)$ on $\fH$. Let $Q_{j}\in \Fcal(D)$ be an element belonging to the same $\Q^\times$-homothety class as $Q_{D}\gamma_{j}$. Then $Q_{j}(z,1)=0$ has the unique root $z_{Q_j}=\gamma_{j,\infty}^{-1}\gamma_{D,\infty}(i)$ on $\fH$. Hence, $\tilde \phi(\gamma_{j,\infty}^{-1}\gamma_{D,\infty})=\phi(z_{Q_j})$. This completes the proof of the equality $\Pcal_{\Delta}(\tilde \phi)=2\,\PP_{D}(\phi)$ for $D\equiv 1\pmod{4}$. The case $D\equiv 0 \pmod{4}$ is similar. 
\hfill $\square$

\medskip
\noindent
{\bf Remark} : The formula \eqref{ZagierEisen}
can be deduced from \cite[Proposition 7.7]{SugiyamaTsuzuki2018}
by Proposition \ref{PeriodL0}.

\subsection{Adelic trace formula}
Let $q$ be a prime number and $\Delta=f^2D$ a discriminant with $D\in \Dcal$, $f\in \N$. For $a\in \Q_q^\times$ and $s,z\in \C$, we set 
\begin{align}
 \Scal_{q}^{\Delta,(z)}(s;a)
&=-q^{-\frac{s+1}{2}}\frac{\zeta_{q}\left(s+\frac{z+1}{2}\right)\zeta_{q}\left(s+\tfrac{-z+1}{2}\right)}{L_{q}(s+1,\varepsilon_{D})}
|a|_q^{\frac{s+1}{2}}, \quad (|a|_q\leq 1), 
 \label{Scaldeltazsa-f1}
\\
\Scal_q^{\Delta,(z)}(s;a)
&=-q^{-\frac{s+1}{2}}
\left\{\frac{\zeta_{q}(-z)\zeta_{q}\left(s+\tfrac{z+1}{2}\right)}{L_{q}\left(\tfrac{-z+1}{2},\varepsilon_{D}\right)}\left|a\right|_q^{\frac{-z+1}{4}}
+\frac{\zeta_{q}(z)\zeta_{q}\left(s+\tfrac{-z+1}{2}\right)}{L_{q}\left(\tfrac{z+1}{2},\varepsilon_{D}\right)}\left|a\right|_q^{\frac{z+1}{4}}\right\}, \quad (|a|_q>1),
 \notag
\end{align}
where $\varepsilon_D$ is the idele class character of $\Q^\times$ corresponding to $\chi_D$.  

Fix a finite set $S$ of prime numbers and an even integer $k\geq 4$ once and for all, and let $\bfs=(s_q)_{q\in S}$ denote a variable belonging to the space $\fX_S=\prod_{q\in S} \fX_q$.
For $a \in \QQ^\times$, set
$$B^{\phi,S}(\Delta; a) =
\prod_{q \notin S}\biggl\{ \frac{\zeta_{q}(-\nu_p)}{L_{q}(\frac{-\nu_p+1}{2},\varepsilon_D)}|a|_q^{\frac{-\nu_p+1}{4}}
+\frac{\zeta_{q}(\nu_p)}{L_{q}(\frac{\nu_p+1}{2}, \varepsilon_D)}|a|_q^{\frac{\nu_p+1}{4}}\biggr\}.
$$
Recall that $\Qcal_\Q^{\rm{Irr}}$ is defined to be the set of all the equivalence classes $(t:n)_{\Q}$ of pairs $(t,n)\in \Q^2$ with $t^2-4n\in \Q^\times-(\Q^\times)^{2}$, where two such pairs $(t,n)$ and $(t',n')$ are defined to be equivalent if and only if $t'=ct$, $n'=c^2n$ $(\exists c\in \Q^\times)$, and that $\Qcal_{\Q}^{\rm Irr}\cap \Qcal_{\Q}^{S}$ is the set of all $(t:n)_{\Q}$ such that there exists a finite idele $c\in \A_\Q^\times$ with  $c_pt\in \Z_p$, $c_p^2n\in \Z_p^\times$ for all primes $p\not\in S$ (\cite[\S 7]{SugiyamaTsuzuki2018}). For $\e\in \{+,-\}$, let $Q_{\e}(S)$
be a complete set of representatives $(t, n)$ of the equivalence classes $(t:n)_\Q \in \Qcal_{\Q}^{{\rm Irr}}\cap \Qcal_\Q^{S}$ with $\e n>0$ and $t, n\in \ZZ$. Set 
\begin{align*}
\JJ_{k,S}^{\rm ell}(\bfs)=\tfrac{1}{2} \sum_{(t,n)\in Q_{+}(S)} \Pcal_{D}(\tilde \phi)\,{B}^{\phi,S}(\Delta; nf^{-2})\{\prod_{q\in S}\Scal_q^{\Delta,(\nu_q)}(s_q;nf^{-2})
\}\,\Ocal_k^{\sgn(\Delta),(\nu_\infty)}\left(\tfrac{t}{\sqrt{|\Delta|}}\right),
\end{align*}
where $\Delta=t^2-4n=f^2D$ for $(t,n)\in Q_{+}(S)$ with $f\in \N$, $D\in \Dcal$.
Define
\begin{align*}
\JJ_{k,S}^{\rm hyp}(\bfs)=\tfrac{1}{4}\hat L\left( \tfrac{1}{2},\phi \right)\sum_{a\in \Z(S)_+^\times-\{1\}} {B}^{\phi,S}(1; a(a-1)^{-2}) \{\prod_{q\in S} \Scal_q^{1,(\nu_q)}(s_q;a(a-1)^{-2})\}\,\Ocal_k^{+,(\nu_\infty)}\left(\tfrac{a+1}{a-1}\right),
\end{align*}
where $\Z(S)_+^\times$ denotes the set of rational numbers of the form $\prod_{q\in S}q^{m_q}$ with $m_q\in \Z$. 

For $\bfs=\{s_q\}_{q\in S}\in \fX_S$, set
$$
\II_{k,S}(\bfs)=\tfrac{4\pi}{k-1}\sum_{f \in H_k}{\mu_f(\phi)}
\frac{1}{\prod_{q\in S}(q^{1/2}\l_{f}(q)- (q^{(1+s_q)/2}+q^{(1-s_q)/2}))}.$$
\begin{prop}\label{ResolventTF}
The series $\JJ_{k,S}^{\rm hyp}(\bfs)$ and $\JJ_{k,S}^{\rm ell}(\bfs)$
 converge absolutely and locally uniformly for $\Re s_q\gg 0\,(q\in S)$. For all $\bfs\in \fX_S$ with $\Re s_q\gg 0\,(q\in S)$, we have the identity
\begin{align}
\II_{k,S}(\bfs)=\JJ_{k,S}^{\rm hyp}(\bfs)+\JJ_{k,S}^{\rm ell}(\bfs).
\label{RTF-s}
\end{align}
\end{prop}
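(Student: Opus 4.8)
The plan is to obtain \eqref{RTF-s} as the specialization of the Jacquet--Zagier type relative trace formula of \cite[\S10]{SugiyamaTsuzuki2017} to the present data, so that both sides arise as two evaluations of a single period integral of an automorphic kernel. Concretely, I would build a global test function $\varphi=\otimes_v\varphi_v$ on $\PGL_2(\A_\Q)$ with the following local components: at $\infty$, a matrix coefficient of the holomorphic discrete series $D_k$ (this is the source of the archimedean kernel $\Ocal_k^{\pm,(\nu_\infty)}$ and of the formal-degree normalization $\tfrac{4\pi}{k-1}$); at each $q\in S$, the resolvent (Green) kernel of the $q$-th Hecke operator with spectral parameter $s_q$, designed so that its eigenvalue on $f\in H_k$ is $(q^{1/2}\l_f(q)-(q^{(1+s_q)/2}+q^{(1-s_q)/2}))^{-1}$, where $q^{1/2}\l_f(q)$ is the eigenvalue of the standard Hecke double coset at $q$; and at $q\notin S$, the unit of the spherical Hecke algebra. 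Writing $[\PGL_2]=\PGL_2(\Q)\bsl\PGL_2(\A_\Q)$ and $K_\varphi(x,y)=\sum_{\gamma\in\PGL_2(\Q)}\varphi(x^{-1}\gamma y)$, the identity \eqref{RTF-s} is the equality of the spectral and geometric expansions of $\int_{[\PGL_2]}\phi(x)\,K_\varphi(x,x)\,dx$.

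For the spectral side I would expand $K_\varphi(x,x)=\sum_{f}\hat\varphi(f)\,|f(x)|^2+(\text{continuous part})$. Because $\varphi_\infty$ is a discrete-series matrix coefficient of weight $k\ge4$, it annihilates the continuous spectrum and isolates the holomorphic cusp forms, so only $f\in H_k$ survive. The eigenvalue $\hat\varphi(f)$ factors over the places: the archimedean factor produces $\tfrac{4\pi}{k-1}$, the factors at $q\notin S$ are $1$, and each $q\in S$ contributes the reciprocal $(q^{1/2}\l_f(q)-(q^{(1+s_q)/2}+q^{(1-s_q)/2}))^{-1}$. Integrating against $\phi$ replaces $|f(x)|^2$ by $\mu_f(\phi)$, yielding precisely $\II_{k,S}(\bfs)$.

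For the geometric side I would group $\gamma$ by $\PGL_2(\Q)$-conjugacy classes and factor each orbital integral over the places. The central and unipotent (parabolic) classes contribute nothing: the central orbital integral is proportional to $\int_{[\PGL_2]}\phi=0$, and the unipotent one pairs with the constant term of $\phi$ along the unipotent radical, which vanishes by cuspidality. The regular semisimple classes split into two families according to whether $t^2-4n$ is a square. The split family, indexed by $a\in\Z(S)_+^\times-\{1\}$, has centralizer the diagonal torus, and the period of the cusp form $\phi$ over the split torus unfolds by Hecke's Mellin transform to the central value $\hat L(\tfrac12,\phi)$; together with the local factors $\Scal_q^{1,(\nu_q)}(s_q;\cdot)$ for $q\in S$, $B^{\phi,S}(1;\cdot)$ for $q\notin S$, and $\Ocal_k^{+,(\nu_\infty)}$ at $\infty$, this is $\JJ_{k,S}^{\rm hyp}(\bfs)$. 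The non-split family, indexed by $(t,n)\in Q_+(S)$ with $\Delta=t^2-4n=f^2D$, has centralizer the torus $E^\times$ with $E=\Q(\sqrt D)$, and the torus period of $\phi$ is exactly $\Pcal_D(\tilde\phi)$; with the matching factors $\Scal_q^{\Delta,(\nu_q)}$, $B^{\phi,S}(\Delta;\cdot)$ and $\Ocal_k^{\sgn(\Delta),(\nu_\infty)}$ this assembles $\JJ_{k,S}^{\rm ell}(\bfs)$. The explicit evaluations of these $p$-adic orbital integrals are those computed in \cite{SugiyamaTsuzuki2017} (and completed at the dyadic place in \S5).

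The main obstacle is the absolute and locally uniform convergence of the two geometric series for $\Re s_q\gg0$, together with the justification of the unfolding that interchanges the sum over $\gamma$ with the period integral. Here the hypothesis $\Re s_q\gg0$ is essential: the resolvent kernel at $q\in S$ is then small, and the explicit factor $\Scal_q^{\Delta,(z)}(s;a)$ carries the weight $|a|_q^{(\Re s+1)/2}$ on $|a|_q\le1$, which produces decay in the $S$-adic size of the conjugacy class. I would bound the number of classes of bounded invariant, control the growth of the toric periods $\Pcal_D(\tilde\phi)$ in $|D|$ and the decay of $\Ocal_k^{\pm,(\nu_\infty)}$ in its argument (via the asymptotics of the Legendre function $\fP^{1-k}_{(z-1)/2}$), and combine these to dominate both series by convergent ones. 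Verifying that the continuous spectrum genuinely drops out of the spectral side, and that no boundary terms survive the truncation needed to render $\int_{[\PGL_2]}\phi\,K_\varphi$ absolutely convergent, are the remaining technical points.
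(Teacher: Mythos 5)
Your plan --- build a global test function with a holomorphic discrete--series matrix coefficient at $\infty$, resolvent (Green) kernels at $q\in S$ and spherical units elsewhere, and equate the spectral and geometric expansions of $\int\phi(x)K_\varphi(x,x)\,dx$ --- is not what the paper does here: its proof of this proposition is essentially a citation of the Jacquet--Zagier type formula already established in \cite[Theorem 10.1]{SugiyamaTsuzuki2017}, followed by a matching of normalizations. What you are sketching is, in effect, the proof of that cited theorem one level down. The two routes are therefore the same mechanism, but they differ in what remains to be checked. The paper's route inherits convergence, unfolding, and the elimination of the continuous and unipotent contributions for free, and only has to verify bookkeeping that your sketch omits: the identification $\varphi_\xi^{0}=2\tilde\phi$ and the resulting overall factor $2^{-1}$; the observation that the classes in $Q_{-}(S)$ contribute nothing because $\Ocal_k^{+,(\nu_\infty)}(a)=0$ for $|a|<1$, which is what reduces the elliptic sum to $Q_{+}(S)$; the measure normalizations giving $\langle\tilde\phi|\tilde f\bar{\tilde f}\rangle_{L^2}=\mu_f(\phi)$; and the fact that the cited theorem is stated only for $S$ avoiding the dyadic place, which is why \S\ref{Appendix} is needed before it applies to arbitrary $S$ (you mention this last point only in passing).

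The genuine gap in your version is that everything you defer as a ``main obstacle'' or ``remaining technical point'' --- absolute and locally uniform convergence of $\JJ_{k,S}^{\rm hyp}$ and $\JJ_{k,S}^{\rm ell}$ for $\Re s_q\gg0$, the interchange of the sum over $\gamma$ with the period integral, and the vanishing of the continuous-spectrum and unipotent terms --- is precisely the nontrivial content of the statement: the convergence assertion is half of the proposition, not a technicality to be supplied later. As written, your argument proves neither that half nor, rigorously, the identity \eqref{RTF-s}. To close it you must either carry out those estimates (bounding the number of classes of bounded invariant, the growth of $\Pcal_D(\tilde\phi)$ in $|D|$, and the decay of $\Ocal_k^{\pm,(\nu_\infty)}$, exactly as in \cite[\S 9--10]{SugiyamaTsuzuki2017}) or, as the paper does, reduce to the already-proved adelic formula and perform the normalization checks listed above.
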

\begin{proof}
We apply \cite[Theorem 10.1]{SugiyamaTsuzuki2018}\footnote{On the left-hand side of the formula in \cite[Theorem 10.1]{SugiyamaTsuzuki2018}, $(-1)^{\#S}$ should be removed.}
to the case $F=\Q$, $\fn=\Z$, $\xi=\pi_{\phi}$, noting that the theorem in this setting holds true for any $S$ by \S \ref{Appendix}. For any $f\in H_k$,
let $\pi_{f}$ be the representation of $\GL_2(\A_\Q)$ generated by $\tilde f$, the lift of $f$ to $\GL_2(\A_\Q)$ constructed as in \cite[\S 3.6]{Bump}. Then $\pi_{f}$ is irreducible and cuspidal and the set $\Pi_{\rm cus}(k,\Z)$ is exhausted by $\pi_{f}$ with $f\in H_k$. We notice that $\varphi_\xi^{0}$ in \cite[\S10]{SugiyamaTsuzuki2018} coincides with $2{\tilde \phi}$ (see the first paragraph of \S~\ref{Proof of Theorem MVT-L3-1}). Under the normalization of the Haar measure on $\GL_2(\A_\Q)$ used in \cite{SugiyamaTsuzuki2018}, we easily see $\langle \tilde \phi |\tilde f\bar{\tilde f}\rangle_{L^2}=\mu_{f}(\phi)$ and $\langle \tilde f|\tilde f_1\rangle_{L^2}=\langle f,f_1\rangle$ for $f,f_1\in H_k$. We note that if $(t,n)\in Q_{-}(S)$, then $|t/\sqrt{t^2-4n}|<1$, which implies $\Ocal_{k}^{+,(\nu_\infty)}(t/\sqrt{t^2-4n})=0$; thus the summand over $Q_{-}(S)$ is zero. Thus, the formula in \cite[Theorem 10.1]{SugiyamaTsuzuki2018} multiplied by $2^{-1}$ is precisely our \eqref{RTF-s}.  
\end{proof}

\subsection{The proof of Theorem~\ref{MVT-L3-1}}
We deduce Theorem~\ref{MVT-L3-1} from Proposition~\ref{ResolventTF}. Recall the Chebyshev polynomials of the 2nd kind $U_m(X)\,(m\in \N_0)$ are defined by the relation $U_m(\cos \theta)=\sin((m+1)\theta)/\sin \theta$. Let $\d\mu_q(s)$ denote the holomorphic $1$-form $2^{-1}(\log q)(q^{(1+s)/2}-q^{(1-s)/2})\,\d s$ on $\fX_q$, and let $L_q(c)$ be the contour $y\mapsto c+iy\,(y\in \R,\,|y|\leq \frac{2\pi i }{\log q})$ on $\fX_q$ for a sufficiently large $c \in \RR$. 

\begin{lem}\label{MVT-L2}
	Suppose $c>(|\Re z|-1)/2$.
	When $a \in \QQ_q^\times$ satisfies $|a|_q\le 1$, we have the equality
	\begin{align*}
	&\tfrac{-1}{2\pi i}\int_{L(c)}(-q^{-\frac{s+1}{2}})\frac{\zeta_{q}(s+\frac{z+1}{2})\zeta_{q}(s+\frac{-z+1}{2})}
	{L_{q}(s+1,\varepsilon_D)}|a|_q^{\frac{s+1}{2}}U_m(2^{-1}(q^{-s/2}+q^{s/2}))d\mu_q(s) \\
	= & \delta(m-\ord_q(a) \in 2\NN_0)q^{-m/2}\biggl\{ \frac{\zeta_{q}(-z)q^{(m-\ord_q(a))(-z+1)/4}}{L_{q}(\frac{-z+1}{2},\varepsilon_D)} +\frac{\zeta_{q}(z)q^{(m-\ord_q(a))(z+1)/4}}{L_{q}(\frac{z+1}{2}, \varepsilon_D)}\biggr\}.
	\end{align*}
		When $a \in \QQ_q^\times$ satisfies $|a|_q>1$, we have
	\begin{align*}
	&\tfrac{-1}{2\pi i}\int_{L(c)}(-q^{-\frac{s+1}{2}})\left\{ \frac{\zeta_{q}(-z)\zeta_{q}(s+\frac{z+1}{2})}
	{L_{q}(\frac{-z+1}{2},\varepsilon_D)}|a|_q^{(-z+1)/4}+ \frac{\zeta_{q}(z)\zeta_{q}(s+\frac{-z+1}{2})}
	{L_{q}(\frac{z+1}{2},\varepsilon_D)} |a|_q^{(z+1)/4} \right\} \\
	& \times U_m(2^{-1}(q^{-s/2}+q^{s/2}))d\mu_q(s) \\
	= &
	\delta(m \in 2\NN_0)q^{-m/2}\biggl\{ \frac{\zeta_{q}(-z)q^{(m-\ord_q(a))(-z+1)/4}}{L_{q}(\frac{-z+1}{2},\varepsilon_D)} +\frac{\zeta_{q}(z)q^{(m-\ord_q(a))(z+1)/4}}{L_{q}(\frac{z+1}{2}, \varepsilon_D)}\biggr\}.
	\end{align*}
\end{lem}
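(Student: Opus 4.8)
The plan is to evaluate both contour integrals by passing to the multiplicative coordinate $u=q^{s/2}$ on $\fX_q$, in which the Chebyshev factor and the measure $\d\mu_q$ become elementary rational expressions, and then to read off the single relevant Laurent coefficient. Writing $v=u^{-2}=q^{-s}$, I would first record the three algebraic facts that make the computation transparent: the Chebyshev polynomial gives
\[
U_m\!\left(\tfrac12(q^{s/2}+q^{-s/2})\right)=\sum_{j=0}^{m}u^{m-2j}=u^{m}\,\frac{1-v^{m+1}}{1-v};
\]
the measure becomes $\d\mu_q(s)=q^{1/2}(1-u^{-2})\,\d u=q^{1/2}(1-v)\,\d u$ after $\d s=\tfrac{2}{\log q}\,\d u/u$; and each local factor is rational in $v$, namely $\zeta_{q}(s+b)=(1-q^{-b}v)^{-1}$ and $1/L_{q}(s+1,\varepsilon_D)=1-\chi_D(q)q^{-1}v$. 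Under $u=q^{s/2}$ the group $\fX_q=\C/(4\pi i(\log q)^{-1}\Z)$ is identified with $\C^\times$, and the contour $L(c)$ (of height $4\pi/\log q$) maps to the positively oriented circle $|u|=q^{c/2}$ traversed exactly once. Consequently, for $c$ sufficiently large each integral $-\tfrac1{2\pi i}\int_{L(c)}(\cdots)\,\d\mu_q(s)$ equals $-1$ times the coefficient of $u^{-1}$ in the Laurent expansion, valid for large $|u|$, of the resulting rational function $G(u)$ (the residue at infinity); this is legitimate because for bounded $\Re z$ all poles of $G$ have bounded modulus.

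Next I would expand $G$ for small $v$ and extract that coefficient. In the case $|a|_q\le1$, put $\mathfrak n=\ord_q(a)\ge0$, $\alpha=q^{-(z+1)/2}$, $\beta=q^{(z-1)/2}$. Collecting the prefactor $-q^{-1/2}u^{-1}$, the term $|a|_q^{(s+1)/2}=q^{-\mathfrak n/2}u^{-\mathfrak n}$, the measure factor $q^{1/2}(1-v)$, and $U_m$, the $(1-v)$ from the measure cancels the denominator of $\sum_{j=0}^m v^j=\tfrac{1-v^{m+1}}{1-v}$, and the problem reduces to the coefficient of $v^{N}$, with $N=(m-\mathfrak n)/2$, in
\[
\frac{(1-\chi_D(q)q^{-1}v)(1-v^{m+1})}{(1-\alpha v)(1-\beta v)}.
\]
A nonzero contribution forces $m-\mathfrak n\in2\NN_0$ (parity of the $u$-exponent and nonnegativity of $N$), which produces the indicator $\delta(m-\ord_q(a)\in2\NN_0)$; since $m+1>N$ the term $v^{m+1}$ is irrelevant, and a two-term partial-fraction decomposition gives $C\alpha^{N}+D\beta^{N}$ with $C=\tfrac{\alpha-\chi_D(q)q^{-1}}{\alpha-\beta}$ and $D=\tfrac{\beta-\chi_D(q)q^{-1}}{\beta-\alpha}$. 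A direct simplification identifies $C=\zeta_{q}(-z)/L_{q}(\tfrac{-z+1}2,\varepsilon_D)$ and $D=\zeta_{q}(z)/L_{q}(\tfrac{z+1}2,\varepsilon_D)$, while $\alpha^{N},\beta^{N}$ combine with the bookkeeping identity $q^{-m/2}=q^{-\mathfrak n/2-N}$ to yield the stated powers $q^{(m-\mathfrak n)(\mp z+1)/4}$.

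The case $|a|_q>1$ is handled identically, only more simply: here $|a|_q$ enters with an $s$-independent exponent, each summand of $\Scal_q^{\Delta,(z)}$ carries a single $\zeta_q$, the measure factor $(1-v)$ telescopes $U_m$ directly into $1-v^{m+1}$, and extracting the coefficient of $u^{-1}$ forces $m\in2\NN_0$ and returns $\alpha^{m/2}$, respectively $\beta^{m/2}$; this gives the indicator $\delta(m\in2\NN_0)$ and the two displayed terms after the same matching of $q$-powers. In both cases the routine part is the partial-fraction algebra and the tracking of the normalizing powers; the only point genuinely requiring care is the reduction in the first paragraph, namely verifying that $L(c)$ is precisely one loop of the circle $|u|=q^{c/2}$ so that the integral is exactly the residue at infinity, and choosing $c$ large enough — equivalently, establishing the identity first for $z$ in a fixed vertical strip and then extending it by meromorphic continuation in $z$.
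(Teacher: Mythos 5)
Your proposal is correct and is essentially the paper's own proof: the authors substitute $\xi=q^{-s/2}$ (the inverse of your $u=q^{s/2}$), reduce the contour integral to $\Res_{\xi=0}$ of the same rational function, and read off the relevant coefficient — exactly your residue-at-infinity computation viewed from the other chart. The only cosmetic difference is that they evaluate the coefficient by multiplying out two geometric series and summing, where you use a two-term partial-fraction decomposition; both yield the same constants $\zeta_q(\mp z)/L_q(\frac{\mp z+1}{2},\varepsilon_D)$ and the same parity/nonnegativity conditions producing the indicator functions.
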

\begin{proof}
	Suppose $|a|_q\le 1$. By the change of variables $\xi=q^{-s/2}$, the integral is transformed to
	\begin{align*}
	& \tfrac{-1}{2\pi i}\oint_{|\xi|=q^{-c/2}}
	\frac{1-\xi^2 \chi_D(q)q^{-1}}{(1-\xi^2 q^{(-z-1)/2})(1-\xi^2q^{(z-1)/2})}|a|_q^{1/2}\xi^{\ord_q(a)}(\xi^{m+1}-\frac{1}{\xi^{m+1}}) d\xi\\
	=& \Res_{\xi=0}\frac{1-\xi^2 \chi_D(q)q^{-1}}{(1-\xi^2 q^{(-z-1)/2})(1-\xi^2q^{(z-1)/2})}|a|_q^{1/2}\xi^{\ord_q(a)-m-1}.
	\end{align*}
	When $\ord_q(a)\ge m+1$, this quantity vanishes. When $0\le \ord_q(a)\le m$, the residue as above vanishes if $m-\ord_q(a)$ is odd.
	If $m-\ord_q(a)=2b$ for $b \in \NN_0$, then
	we have
{\allowdisplaybreaks\begin{align*}
	& \Res_{\xi=0}\frac{1-\xi^2 \chi_D(q)q^{-1}}{(1-\xi^2 q^{(-z-1)/2})(1-\xi^2q^{(z-1)/2})}|a|_q^{1/2}\xi^{\ord_q(a)-m-1} \\
	= & |a|_q^{1/2}\Res_{\xi=0}\biggl\{ \sum_{l=0}^\infty \sum_{j=0}^\infty \xi^{2l+2j} q^{\frac{-z-1}{2}l+\frac{z-1}{2}j}
	-\chi_D(q)q^{-1}
	\sum_{l=0}^{\infty}\sum_{j=0}^{\infty} \xi^{2l+2j+2} q^{\frac{-z-1}{2}l+\frac{z-1}{2}j}
	\biggr\} \xi^{-2b-1} \\
	= & |a|_q^{1/2}\biggl\{\sum_{l=0}^{b} q^{\frac{-z-1}{2}l +\frac{z-1}{2}(b-l)}
	-\chi_D(q)q^{-1}
	\sum_{l=0}^{b-1}q^{\frac{-z-1}{2}l+\frac{z-1}{2}(b-1-l)}
	\biggr\} \\
	= & |a|_q^{1/2}\frac{1-q^{-(b+1)z}}{1-q^{-z}}q^{\frac{z-1}{2}b}-\chi_D(q)q^{-1}\delta(b \ge 1)\frac{1-q^{-bz}}{1-q^{-z}}q^{\frac{z-1}{2}(b-1)} \\
	= & \delta(b=0)|a|_q^{1/2} +\delta(b \ge 1)|a|_q^{1/2}\biggl\{ \frac{1-\chi_{D}(q)q^{(-z-1)/2}}{1-q^{-z}}q^{b(z-1)/2} +\frac{1-\chi_{D}(q)q^{(z-1)/2}}{1-q^z} q^{b(-z-1)/2}\biggr\} \\
	=& q^{-m/2}\biggl\{\frac{\zeta_{q}(z)}{L_{q}(\frac{z+1}{2},\varepsilon_D)}q^{\frac{z+1}{4}(m-\ord_q(a))} +\frac{\zeta_{q}(-z)}{L_{q}(\frac{-z+1}{2}, \varepsilon_D)}q^{\frac{-z+1}{4}(m-\ord_q(a))}\biggr\}.
	\end{align*}}This yields the first identity.
	The case where $|a|_q>1$ is more easily handled by noting $\Res_{\xi=0}(\frac{\xi^{-m-1}}{1-\xi^2 q^{(-z-1)/2}}) = \delta(m \in 2\NN_0)q^{m(-z-1)/4}$.
\end{proof}

For a positive integer $n=\prod_{q\in S}q^{m_q}$ with $m_q\in \N$, let $\a_n$ be a function on $\fX_S$ defined as $\a_n(\bfs)=\prod_{q\in S}U_{m_q}(2^{-1}(q^{-s_q/2}+q^{s_q/2}))$.
By Proposition~\ref{ResolventTF}, we obtain an equality between 
\begin{align}
\left(\tfrac{-1}{2\pi i}\right)^{\# S} \int_{L_S(c)}\II_{k,S}(\bfs)\a_n(\bfs)\d\mu_S(\bfs)
 \label{trace-LHS}
\end{align}
and 
\begin{align}
\left(\tfrac{-1}{2\pi i}\right)^{\# S} \int_{L_S(c)}\JJ_{k,S}^{\rm hyp}(\bfs)\a_n(\bfs)\d\mu_S(\bfs)+
\left(\tfrac{-1}{2\pi i}\right)^{\# S} \int_{L_S(c)}\JJ_{k,S}^{\rm ell}(\bfs)\a_n(\bfs)\d\mu_S(\bfs), 
 \label{trace-RHS}
\end{align}
where $\int_{L_S(c)}f(\bfs)\d\mu_S(\bfs)$ is defined as the iteration of the one-dimensional contour integral $\int_{L_{q(j)}(c)}\,\d\mu_{q(j)}(s_{q(j)})\,(j=1,\dots,l)$ in any ordering $S=\{q(1),\dots,q(l)\}$. 
By the formula 
$$
\tfrac{-1}{2\pi i }\int_{L_q(c)}\{q^{(1+\nu)/2}+q^{(1-\nu)/2}-q^{(1+s)/2}-q^{(1-s)/2}\}^{-1}\a(s)\d\mu_q(s)=\a(\nu), 
$$
which is valid for any holomorphic function $\a$ on $\fX_q$ such that $\a(-s)=\a(s)$, the quantity \eqref{trace-LHS} becomes 
\begin{align*}
\tfrac{4\pi}{k-1}\sum_{f \in H_k}\{\prod_{q\in S}U_{m_q}(2^{-1}(q^{\nu_q(f)/2}+q^{-\nu_q(f)/2}))\}\,{\mu_f(\phi)},
\end{align*}
where $\nu_q(f)\in \fX_q$ is determined by $\l_{f}(q)=q^{\nu_q(f)/2}+q^{-\nu_q(f)/2}$. Since $$\l_{f}(n)=\prod_{q\in S} U_{m_q}(2^{-1}(q^{\nu_q(f)/2}+q^{-\nu_q(f)/2})),$$ the above quantity multiplied by $n^{1/2}$ is equal to the left-hand side of \eqref{MVT-L3-1-f1}. 

The first term of \eqref{trace-RHS} becomes
\begin{align}
&\tfrac{1}{4}\hat L\left(\tfrac{1}{2},\phi\right)
\sum_{a\in \Z(S)_+^\times-\{1\}}
{\bf B}^{S}(\nu; (a-1)^2)
\Ocal_{k}^{+,(\nu_\infty)}\left(\tfrac{a+1}{a-1}\right)
 \notag
\\
&\quad \times\prod_{q\in S} \tfrac{-1}{2\pi i } \int_{L_q(c)}\Scal_q^{1,(\nu_q)}\left(s;
\tfrac{a}{(a-1)^2}\right) U_{m_q}(2^{-1}(q^{s/2}+q^{-s/2}))\,\d\mu_q(s).
\label{MVT-L4-f1} 
\end{align}
By Lemma \ref{MVT-L2}, the $S$-factor \eqref{MVT-L4-f1} is non-zero only if\begin{align}
m_q \geq \ord_{q}((a-1)^{-2}a)\geq 0, \quad m_q \equiv \ord_{q}(a)\pmod{2}
 \label{MVT-L4-f2}
\end{align}
or 
\begin{align}
\ord_{q}((a-1)^{-2}a)< 0, \quad m_q \equiv 0 \pmod{2}
 \label{MVT-L4-f3}
\end{align}
for all $q\in S$. Let $a\in \Z(S)_+^\times-\{1\}$ be an element satisfying these two conditions, and write $a=a_1a_2^{-1}$ with relatively prime integers $a_1,a_2>0$.

Then there exists  $b \in \N$ such that $n=a_1 a_2 b^2$. To prove this, it suffices to check $\ord_{q}(n/(a_1a_2)) \in 2 \N_{0}$
for all prime numbers $q$. If $q \notin S$, then $\ord_q(n)=\ord_q(a_1a_2)=0$ holds.
If $q\in S$ and $q|a_1a_2$, then we have $\ord_{q}(a_1a_2)\geq 1$, and $\ord_{q}(a_1-a_2)=0$ due to $(a_1,a_2)=1$. Hence \eqref{MVT-L4-f3} can not happen
and \eqref{MVT-L4-f2} yields $\ord_{q}(n/(a_1a_2)) \in 2 \N_{0}$. If
$q \in S$ and $(q,a_1a_2)=1$, then \eqref{MVT-L4-f2} or \eqref{MVT-L4-f3} implies that $\ord_q(n/(a_1a_2))=m_q \in 2\N$.


If we set $d_1=a_1b$ and $d_2=a_2b$, then $n=d_1d_2$ and $a=d_1d_2^{-1}$ holds. By Lemma~\ref{MVT-L2}, the $q$-factor of \eqref{MVT-L4-f1} becomes
\begin{align*}
q^{-m_q/2}\left\{ \frac{\zeta_q(-\nu_q)}{L_q\left(\tfrac{-\nu_q+1}{2},\varepsilon_{D}\right)}|d_1-d_2|_q^{(\nu_q-1)/2}+
\frac{\zeta_q(\nu_q)}{L_q\left(\tfrac{\nu_q+1}{2},\varepsilon_{D}\right)}
|d_1-d_2|_q^{(-\nu_q-1)/2}\right\},
\end{align*}
whose product over $q\in S$ together with ${\bf B}^{S}(\nu;(a-1)^2)={\bf B}^{S}(\nu;(d_1-d_2)^2)$ yields ${\bf B}(\nu;(d_1-d_2)^{2})$. Thus we obtain the first term of the right-hand side of \eqref{MVT-L3-1-f1} up to the factor $n^{1/2}$. 

Let us examine the second term of \eqref{trace-RHS}. We need a lemma.

\begin{lem} Let $n=\prod_{q\in S}q^{m_q}$ be a positive integer with $m_q\in \N$. Let $(t:n')\in Q_{+}(S)$ be such that 
	\begin{align}
		\prod_{q\in S}
		\tfrac{-1}{2\pi i } \int_{L_q(c)}\Scal_q^{t^2-4n',(\nu_q)}\left(s;
		\tfrac{n'}{f^2}\right) U_{m_q}(2^{-1}(q^{s/2}+q^{-s/2}))\,\d\mu_q(s)\not=0,
		\label{Scalnonvanishing}
	\end{align}
	where $t^2-4n'=f^2D$ with $f\in \N$ and a fundamental discriminant $D$. Then there exists $b\in \Z(S)^\times$ such that $bt\in \Z_q$ and $\ord_{q}(b^{2}n')=m_q$ for all $q\in S$.
\end{lem}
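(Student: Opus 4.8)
The plan is to extract, from Lemma~\ref{MVT-L2}, the arithmetic constraints that nonvanishing of each local factor in \eqref{Scalnonvanishing} imposes, and then to build $c$ place by place, checking its two defining properties against the relation $t^2-4n'=f^2D$. First I would fix $q\in S$ and abbreviate $d_q=\ord_q(n')$, $\ell_q=\ord_q(f)$, $v_q=\ord_q(t)$, so that the argument $a=n'f^{-2}$ of $\Scal_q$ has $\ord_q(a)=d_q-2\ell_q$. Since the product in \eqref{Scalnonvanishing} is assumed nonzero, every factor is nonzero, so by Lemma~\ref{MVT-L2} the relevant $\delta$-symbol equals $1$: when $\ord_q(a)\ge 0$ this reads $m_q\ge d_q-2\ell_q$ together with $m_q\equiv d_q-2\ell_q\equiv d_q\pmod 2$, and when $\ord_q(a)<0$ it reads $m_q$ even. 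I then set $e_q=(m_q-d_q)/2$ and $c=\prod_{q\in S}q^{e_q}$. By construction $\ord_q(c^2n')=2e_q+d_q=m_q$, so it remains only to verify that $e_q\in\Z$ (equivalently $m_q\equiv d_q\pmod 2$) and that $ct\in\Z_q$, i.e. $e_q\ge -v_q$, equivalently $m_q\ge d_q-2v_q$, for every $q\in S$.

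Both remaining points reduce to a comparison of $q$-adic valuations across $f^2D=t^2-4n'$, and the plan is to treat two cases. If $\ord_q(a)\ge 0$, I would show $\ell_q\le v_q$; then $d_q-2v_q\le d_q-2\ell_q\le m_q$ gives the lower bound, while the parity $m_q\equiv d_q$ is already recorded above, so $e_q\in\Z$. If $\ord_q(a)<0$, I would show $d_q$ is even with $2v_q=d_q$; then $d_q-2v_q=0\le m_q$ gives the lower bound, while $m_q$ even together with $d_q$ even gives $m_q\equiv d_q\pmod2$, hence $e_q\in\Z$.

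For odd $q$ one has $\ord_q(4n')=d_q$ and, since $D$ is a fundamental discriminant, $\ord_q(D)\in\{0,1\}$ with $\ord_q(f^2D)=2\ell_q+\ord_q(D)$. Applying the ultrametric inequality to $t^2=f^2D+4n'$ and using that $\ord_q(t^2)=2v_q$ is even then forces the two desired conclusions: when $\ord_q(a)<0$ one has $\ord_q(f^2D)\ge 2\ell_q>d_q$, so $\ord_q(t^2)=d_q$ and $2v_q=d_q$ (in particular $d_q$ is even); when $\ord_q(a)\ge 0$ a short case division on the sign of $2\ell_q+\ord_q(D)-d_q$ yields $\ell_q\le v_q$, the parity of $\ord_q(t^2)$ ruling out the offending configurations.

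The step I expect to be the main obstacle is the dyadic place $q=2$, where $\ord_2(4n')=d_2+2$ and a fundamental discriminant can have $\ord_2(D)\in\{0,2,3\}$ subject to the usual congruences ($D\equiv 1\pmod 4$ when $\ord_2D=0$, and $D/4\equiv 2,3\pmod 4$ otherwise). I would run the same ultrametric comparison at $q=2$, now carrying the extra summand $2$ and the three admissible values of $\ord_2(D)$; the key mechanism is unchanged, namely that $\ord_2(t^2)$ must be even, which eliminates the configurations that would otherwise make $d_2$ odd in the case $\ord_2(a)<0$ and again yields $\ell_2\le v_2$ in the case $\ord_2(a)\ge 0$. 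This bookkeeping at the prime $2$ is exactly the sort of dyadic computation carried out in the final section of the paper, and it is where the argument requires the most care.
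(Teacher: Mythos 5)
Your overall strategy coincides with the paper's: read off from Lemma~\ref{MVT-L2} the conditions under which each local factor of \eqref{Scalnonvanishing} is nonzero, and assemble $c$ place by place. The paper's own proof is far terser: it records only that the nonvanishing forces $m_q=\ord_q(n'f^{-2})+2\xi_q$ with $\xi_q\in\NN_0$ and takes $\ord_q(c)=\xi_q$. Note that your exponent $e_q=(m_q-\ord_q(n'))/2$ differs from the paper's $\xi_q$ by $\ord_q(f)$, and yours is the one that literally yields $\ord_q(c^2n')=m_q$ as stated (the paper's choice gives $m_q+2\ord_q(f)$, which agrees only when $f$ is an $S$-unit). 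What your write-up supplies, and the paper leaves implicit, is the verification that $e_q\in\Z$ and $e_q\ge-\ord_q(t)$, which you correctly reduce to a valuation comparison across $t^2=f^2D+4n'$. One simplification: when $\ord_q(n'f^{-2})\ge0$ the inequality $\ord_q(f)\le\ord_q(t)$ follows at every place, including $q=2$, from $\ord_q(t^2)\ge\min\bigl(\ord_q(f^2D),\ord_q(4n')\bigr)\ge2\ord_q(f)$, so no case division on the sign of $2\ord_q(f)+\ord_q(D)-\ord_q(n')$ is needed there.

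The one step of your plan that would not close as described is the dyadic case with $\ord_2(n'f^{-2})<0$. You attribute the elimination of the bad configuration ``$\ord_2(n')$ odd'' to the evenness of $\ord_2(t^2)$, but in the offending subcase, namely $2\ord_2(f)=\ord_2(n')+1$ and $\ord_2(D)=0$, one finds $\ord_2(t^2)=\ord_2(n')+1$, which \emph{is} even, so parity rules out nothing. What actually kills this subcase is the finer congruence: dividing $t^2-4n'=f^2D$ by $2^{2\ord_2(f)}$ gives $t'^2-2n''=f'^2D$ with $t',f',n''$ odd, whence $D\equiv1-2n''\equiv3\pmod4$, contradicting $D\equiv1\pmod4$, which is forced for a fundamental discriminant with $\ord_2(D)=0$. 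The remaining subcases are harmless: if $2\ord_2(f)+\ord_2(D)>\ord_2(4n')$ then $\ord_2(t^2)=\ord_2(n')+2$ forces $\ord_2(n')$ even, and if $2\ord_2(f)+\ord_2(D)=\ord_2(4n')$ then $\ord_2(D)<2$ forces $\ord_2(D)=0$ and $\ord_2(n')=2\ord_2(f)-2$ even. With that one repair your argument is complete and, if anything, more careful than the printed proof.
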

\begin{proof} For each $q\in S$, we shall show that $\ord_{q}(n')\equiv m_q\pmod{2}$ and the element $b_q=q^{(m_q-\ord_q(n'))/2}$ satisfies both $b_qt \in \Z_q$ and $b_q^2n'\in n\Z_q^{\times}$. Then $b=\prod_{q\in S}b_q$ meets the requirements. In what follows, $q$ denotes an element of $S$.
	
	(1) Suppose $q\neq 2$, $\ord_q(t^2-4n')\in 2\Z$ and $t^2-4n'\not\in (\Q_q^\times)^2$. Then we have $|t|_q^2\leq |4n'|_q$ and
	$|\tfrac{n'}{f^2}|_q\geq 1$ from \cite[Proposition 7.12 (1)]{SugiyamaTsuzuki2018}.
	By Lemma~\ref{MVT-L2}, we have $m_q\in 2\NN$.
	If $|t|_q^2<|4n'|_q$, then $\ord_q(n') = \ord_q((t^2-4n')/4) \in 2\ZZ$.
If $|t|_q^2=|4n'|_q$, then $\ord_q(n')=\ord_q(t^2/4)\in 2\ZZ$ holds.	
Hence we obtain $\ord_q(n')\equiv 0 \equiv m_q \pmod{2}$, and $|b_q t|_q^2\leq |4b_q^2 n'|_q=|4n|_q\leq 1$.

(2) Suppose $q=2$, $\ord_2(t^2-4n')\in 2\Z$ and $\QQ_2(\sqrt{D}) = \QQ_2(\sqrt{5})$. Then,
$|t^2|_2\le |n'|_2$ and
$$|\tfrac{n'}{f^2}|_2 = \begin{cases}|\frac{1}{4}|_2=4>1  & (|t^2|_2<|4n'|_2 \text{ or } \frac{t^2}{n'}\in \ZZ_2^\times), \\
|\frac{1}{4}\frac{4n'}{t^2-4n'}|_2 \ge 4>1 & (\frac{t^2}{4n'} \in \ZZ_2^\times)
\end{cases}$$
by \cite[Proposition 7.12 (1)]{SugiyamaTsuzuki2018}. We note that $|2n'|_2 = |t^2|_2$
never happens by \cite[Lemma 7.11 (4)]{SugiyamaTsuzuki2018}.
Combining this with $\ord_2(t^2-4n')\in 2\ZZ$, $\ord_2(n')$ is even.
Thus we obtain $\ord_2(n')\equiv 0 \equiv m_2 \pmod{2}$ by Lemma~\ref{MVT-L2},
and $|b_2 t|_2^2 \le |b_2^2 n'|_2 = |n|_2\le1$.

(3) Suppose $q=2$, $\ord_2(t^2-4n')\in 2\Z$ and $\QQ_2(\sqrt{D})=\QQ_2(\sqrt{\tau})$ with $\tau=-1,-5$. Then,
$|t^2|_2 \le |2n'|_2$ and
$$|\tfrac{n'}{f^2}|_2 = \begin{cases}1  & (|t^2|_2<|4n'|_2), \\
	\frac{1}{2} & (\frac{t^2}{2n'} \in \ZZ_2^\times), \\
	|\frac{4n'}{t^2-4n'}|_2 \ge 1 & (\frac{t^2}{4n'} \in \ZZ_2^\times)
\end{cases}$$
by \cite[Proposition 7.12 (1)]{SugiyamaTsuzuki2018}.
As for the first two cases, $m_2\equiv \ord_{2}(n') \pmod{2}$ follows from Lemma \ref{MVT-L2}.
As for the third case, with the aid of Lemma \ref{MVT-L2},
$\ord_2(m_2) \equiv0 \equiv \ord_2(t^2/4) = \ord_2(n')$.
Hence we obtain $m_2\equiv \ord_{2}(n') \pmod{2}$,
and $|b_2t|_2^2 \le |2b_2^2n'|_2 = |2n|_2 <1$.
	
	(4) Suppose $\ord_{q}(t^2-4n')\in 1+2\Z$. Then $|t|_q^2\leq |4n'|_q$ from \cite[Proposition 7.12 (2)]{SugiyamaTsuzuki2018}. First suppose $|t|_q^2<|4n'|_q$. Then $|\tfrac{n'}{f^2}|_q=q^{-1}<1$ holds from \cite[Proposition 7.12 (2)]{SugiyamaTsuzuki2018}. By Lemma~\ref{MVT-L2}, we have $m_q\equiv \ord_q(\tfrac{n'}{f^2}) \equiv \ord_q(n')\pmod{2}$, and $|b_qt|_q^2<|4b_q^2 n'|_q=|4n|_q\leq 1$. Next suppose $|t|_q^2=|4n'|_q$. Then $|\tfrac{n'}{f^2}|_q\geq 1$ holds from \cite[Proposition 7.12 (2)]{SugiyamaTsuzuki2018} and
	$\ord_{q}(t^2-4n')\in 1+2\Z$. By Lemma~\ref{MVT-L2}, $m_q\equiv 0 \equiv \ord_{q}(t^2/4)= \ord_q(n') \pmod{2}$. By $|b_q t|_q^2=|4b_q^2 n'|_q=|4n|_q\le 1$, we are done.
%
\end{proof}

From this lemma, the second term of \eqref{trace-RHS} becomes
\begin{align}
&\tfrac{1}{2}
\sum_{D \in \Dcal}\sum_{t \in\Tcal(n, D)}
\Pcal_{D}(\phi)B^{\phi, S}(\nu; nf^{-2})
\Ocal_{k}^{\sgn(t^2-4n),(\nu_\infty)}\left(\tfrac{t}{\sqrt{|t^2-4n|}}\right)
 \notag
\\
&\quad \times\prod_{q\in S} \tfrac{-1}{2\pi i } 
\int_{L_q(c)}\Scal_q^{t^2-4n,(\nu_q)}\left(s;
nf^{-2}\right) U_{m_q}(2^{-1}(q^{s/2}+q^{-s/2}))\,\d\mu_q(s),
\end{align}
where $\Tcal(n,D)$ is the set of $t\in \Z$ such that $t^2-4n=f^2D$ with some $f\in \N$.
From Lemma~\ref{MVT-L2}, the $S$-factor is computed to be 
$$
n^{-1/2}\prod_{q\in S} \left\{ \frac{\zeta_q(-\nu_q)}{L_q\left(\tfrac{-\nu_q+1}{2},\varepsilon_{D}\right)}|f^{-2}|_q^{(-\nu_q+1)/4}+
\frac{\zeta_q(\nu_q)}{L_q\left(\tfrac{\nu_q+1}{2},\varepsilon_{D}\right)}
|f^{-2}|_q^{(\nu_q+1)/4}\right\},  
$$
which together with $B^{\phi,S}(\nu;nf^{-2})$ yields ${\bf B}(\nu; t^2-4n)$. Thus we obtain the second term of the right-hand side of \eqref{MVT-L3-1-f1}
up to the factor $n^{1/2}$ by using Proposition~\ref{PeriodL0}. This completes the proof of Theorem~\ref{MVT-L3-1}. \qed

\section{Proofs of Theorems~\ref{ASYMPT} and \ref{ASYMPT_average}}
Let $\phi$ be as before with the spectral parameter $(\nu_\infty, \nu=\{\nu_p\}_{p<\infty})$. 

Let us recall the function used in \cite{Zagier}. For $\Delta,t\in \R$ such that $t^2>\Delta\neq0$ and $s\in \C$ with $1-k<\Re s<k$, it is defined as the integral  
$$
I_{k}(\Delta,t;s)=\frac{\Gamma\left(k-\tfrac{1}{2}\right)\sqrt{\pi}}{\Gamma(k)}\int_{0}^{\infty}\frac{y^{k+s-2}}{(y^2+ity-\Delta/4)^{k-1/2}}\,\d y.
$$
Then from formulas on \cite[p.134]{Zagier}, if $\Delta=t^2-4n<0$, 
\begin{align}
\Ocal_k^{-,(z)}\left(\tfrac{t}{\sqrt{|\Delta|}}\right)
=|\Delta|^{-k/2}\, 2^{2k-1}\,(-1)^{k/2}\,n^{k/2}\,\{I_{k}\Bigl(-4,\tfrac{2t}{\sqrt{|\Delta|}};\tfrac{z+1}{2}\Bigr)+I_{k}\Bigl(-4,\tfrac{-2t}{\sqrt{|\Delta|}};\tfrac{z+1}{2}\Bigr)\}, 
\label{Zagierftn1}
\end{align}
and if $\Delta=t^2-4n>0$ and $n>0$, 
\begin{align}
\Ocal_{k}^{+,(z)}\left(\tfrac{t}{\sqrt{\Delta}}\right)=\frac{\Gamma\left(\tfrac{z+3}{4}\right)}{\sqrt{\pi}\Gamma\left(\tfrac{z+1}{4}\right)}\,
|\Delta|^{-k/2}\, 2^{2k-1}\,(-1)^{k/2}\,n^{k/2}\,\{I_{k}\left(4,\tfrac{2t}{\sqrt{\Delta}};\tfrac{z+1}{2}\right)+I_{k}\left(4,\tfrac{-2t}{\sqrt{\Delta}};\tfrac{z+1}{2}\right)\}. 
\label{Zagierftn2}
\end{align}

\begin{lem}\label{OIestL1} If $k>(|\Re z|+1)/2$, then
\begin{align*}
\Ocal_{k}^{-,(z)}(0)=2^{k-1}(-1)^{k/2}\sqrt{\pi}\Gamma\left(\tfrac{k}{2}+\tfrac{-z-1}{4}\right)\Gamma\left(\tfrac{k}{2}+\tfrac{z-1}{4}\right)\Gamma(k)^{-1}. 
\end{align*}
\end{lem}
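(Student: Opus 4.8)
The plan is to specialize the Zagier-type identity \eqref{Zagierftn1} to the case $t=0$. Since $\Ocal_k^{-,(z)}(0)$ is by definition the value of $\Ocal_k^{-,(z)}(a)$ at $a=0$, and the argument $a=t/\sqrt{|\Delta|}$ appearing in \eqref{Zagierftn1} vanishes exactly when $t=0$ (the discriminant $\Delta=-4n$ then being negative, as required), I would set $t=0$ there. This is the convenient route because the Legendre-function definition of $\Ocal_k^{-,(z)}(a)$ is of indeterminate form at $a=0$, whereas the right-hand side of \eqref{Zagierftn1} is manifestly regular at $t=0$. Setting $t=0$, both summands $I_k(-4,\tfrac{2t}{\sqrt{|\Delta|}};\tfrac{z+1}{2})$ and $I_k(-4,\tfrac{-2t}{\sqrt{|\Delta|}};\tfrac{z+1}{2})$ collapse to the single value $I_k(-4,0;\tfrac{z+1}{2})$, contributing a factor $2$; meanwhile $|\Delta|^{-k/2}=(4n)^{-k/2}=2^{-k}n^{-k/2}$, so the factor $n^{k/2}$ cancels (consistent with $\Ocal_k^{-,(z)}(0)$ being independent of $n$) and the numerical prefactor $2^{-k}\cdot 2^{2k-1}\cdot 2=2^{k}$ emerges. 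This reduces the claim to
\[
\Ocal_k^{-,(z)}(0)=2^{k}(-1)^{k/2}\,I_k(-4,0;\tfrac{z+1}{2}).
\]

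Next I would evaluate $I_k(-4,0;s)$ directly from its defining integral. With $t=0$ and $\Delta=-4$ one has $ity=0$ and $-\Delta/4=1$, so the integrand becomes $y^{k+s-2}(1+y^2)^{-(k-1/2)}$ and
\[
I_k(-4,0;s)=\frac{\Gamma(k-\tfrac12)\sqrt{\pi}}{\Gamma(k)}\int_{0}^{\infty}\frac{y^{k+s-2}}{(1+y^2)^{k-1/2}}\,\d y.
\]
This is a classical Beta integral: applying $\int_{0}^{\infty}y^{\alpha-1}(1+y^2)^{-\beta}\,\d y=\tfrac12\,\Gamma(\tfrac{\alpha}{2})\Gamma(\beta-\tfrac{\alpha}{2})\Gamma(\beta)^{-1}$ with $\alpha=k+s-1$ and $\beta=k-\tfrac12$ gives $\tfrac12\,\Gamma(\tfrac{k+s-1}{2})\Gamma(\tfrac{k-s}{2})\Gamma(k-\tfrac12)^{-1}$, whereupon the two factors $\Gamma(k-\tfrac12)$ cancel and $I_k(-4,0;s)=\tfrac{\sqrt\pi}{2\Gamma(k)}\Gamma(\tfrac{k+s-1}{2})\Gamma(\tfrac{k-s}{2})$. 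Substituting $s=\tfrac{z+1}{2}$ turns the $\Gamma$-arguments into $\tfrac{k}{2}+\tfrac{z-1}{4}$ and $\tfrac{k}{2}+\tfrac{-z-1}{4}$, and multiplying by $2^{k}(-1)^{k/2}$ produces exactly the asserted right-hand side.

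Finally I would record the convergence hypothesis. The Beta integral converges precisely when $0<\Re(k+s-1)$ and $\Re(k+s-1)<2\Re(k-\tfrac12)$, that is $1-\Re s<k$ and $\Re s<k$; with $\Re s=\tfrac{\Re z+1}{2}$ these combine to $k>(|\Re z|+1)/2$, which is exactly the stated condition. I do not expect a genuine obstacle here: the argument is a specialization followed by a standard Beta-integral evaluation. The only points requiring care are the bookkeeping of the powers of $2$ and $n$ in the prefactor and verifying that the specialization $t=0$ is admissible, which it is since $t=0$ gives $\Delta=-4n<0$ and hence lies in the range of validity of \eqref{Zagierftn1}.
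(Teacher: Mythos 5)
Your proposal is correct and follows essentially the same route as the paper's own proof: specialize \eqref{Zagierftn1} at $t=0$ (so that $|\Delta|^{-k/2}n^{k/2}$ collapses to $4^{-k/2}$ and the two $I_k$ terms coincide), then evaluate $I_k(-4,0;\tfrac{z+1}{2})$ as a Beta integral, the paper doing this via the substitution $x=(1+y^2)^{-1}$ rather than by quoting the standard formula. The bookkeeping of the powers of $2$, the resulting Gamma arguments, and the convergence condition $k>(|\Re z|+1)/2$ all check out.
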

\begin{proof}
From \eqref{Zagierftn1},  we have  
\begin{align*}
\Ocal_k^{-,(z)}(0)&=4^{-k/2}\, 2^{2k-1}\,(-1)^{k/2}\frac{\Gamma\left(k-1/2\right)\sqrt{\pi}}{\Gamma(k)} 
2\int_{0}^{\infty}\frac{y^{k+({z-3})/{2}}}{(y^2+1)^{k-1/2}}\d y.
\end{align*}
By the change of variables $x=(1+y^2)^{-1}$, the $y$-integral becomes a half of the beta integral $B(\frac{k}{2}+\frac{-z-1}{4},\frac{k}{2}+\frac{z-1}{4})=\Gamma(\frac{k}{2}+\frac{-z-1}{4})\Gamma(\frac{k}{2}+\frac{z-1}{4}){\Gamma(k-1/2)^{-1}}$.\end{proof}

\begin{lem} \label{2017/12/04-2}
	Let $t,n\in \Z$ such that $\Delta=t^2-4n<0$. Set $\rho=2^{-1}(-t+i\sqrt{|\Delta|})$. For all $z\in i\R$, we have
	\begin{align*}
	\Ocal_{k}^{-,(z)}\left(\tfrac{t}{\sqrt{|\Delta|}}\right)
	=&\frac{2\pi}{i\sqrt{|\Delta|}}\frac{\Gamma(k+(z-1)/2)\Gamma(k+(-z-1)/2)}{\Gamma(k)^2}\\
	&\times \left\{
	\rho\,(\bar \rho/\rho)^{k/2}
	{}_2F_1\left(\tfrac{1+z}{2},\tfrac{1-z}{2};k;\tfrac{\bar \rho}{-i\sqrt{|\Delta|}}\right)
	-
	\bar \rho\,(\rho/\bar \rho)^{k/2}
	{}_2F_1\left(\tfrac{1+z}{2},\tfrac{1-z}{2};k;\tfrac{\rho}{i\sqrt{|\Delta|}}\right) \right\}.
	\end{align*}
\end{lem}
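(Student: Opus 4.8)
The plan is to feed the Gauss hypergeometric representation of the associated Legendre function directly into the definition of $\Ocal_k^{-,(z)}$, bypassing Zagier's integral $I_k$. Writing $a=t/\sqrt{|\Delta|}$, recall from \cite[\S4.1]{MOS} that for $x\notin(-\infty,1]$ one has
\begin{align*}
\fP^{1-k}_{\frac{z-1}{2}}(x)=\frac{1}{\Gamma(k)}\left(\frac{x+1}{x-1}\right)^{\frac{1-k}{2}}{}_2F_1\left(\tfrac{1+z}{2},\tfrac{1-z}{2};k;\tfrac{1-x}{2}\right),
\end{align*}
the parameters $-\nu=\tfrac{1-z}{2}$, $\nu+1=\tfrac{1+z}{2}$ and $1-\mu=k$ being exactly those in the asserted formula. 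Since $t\neq0$ in the range where the lemma is used (the value $t=0$, i.e.\ $\Delta=-4n$, is the term singled out in Theorem~\ref{ASYMPT} and governed by Lemma~\ref{OIestL1}), both $\pm ia$ lie off the cut $(-\infty,1]$, so the representation applies to $\fP^{1-k}_{\frac{z-1}{2}}(ia)$ and $\fP^{1-k}_{\frac{z-1}{2}}(-ia)$.

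The algebraic heart is a short list of identities tying the Legendre data to $\rho$. Using $\rho\bar\rho=n$, $\rho+\bar\rho=-t$ and $\rho-\bar\rho=i\sqrt{|\Delta|}$, I would verify
\begin{align*}
\tfrac{1-ia}{2}=\tfrac{\bar\rho}{-i\sqrt{|\Delta|}},\quad \tfrac{1+ia}{2}=\tfrac{\rho}{i\sqrt{|\Delta|}},\quad \tfrac{ia+1}{ia-1}=\tfrac{\rho}{\bar\rho},\quad \tfrac{-ia+1}{-ia-1}=\tfrac{\bar\rho}{\rho},
\end{align*}
so the two hypergeometric factors produced are precisely ${}_2F_1(\tfrac{1+z}{2},\tfrac{1-z}{2};k;\tfrac{\bar\rho}{-i\sqrt{|\Delta|}})$ and ${}_2F_1(\tfrac{1+z}{2},\tfrac{1-z}{2};k;\tfrac{\rho}{i\sqrt{|\Delta|}})$, matching the statement. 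Together with $(1+a^2)^{1/2}=2\sqrt{n}/\sqrt{|\Delta|}$, substituting these into the definition of $\Ocal_k^{-,(z)}$ collapses the prefactor to $\tfrac{2\pi}{\sqrt{|\Delta|}}\Gamma(k+\tfrac{z-1}{2})\Gamma(k+\tfrac{-z-1}{2})\Gamma(k)^{-2}$ times a bracket whose two entries carry the Legendre prefactors $(\rho/\bar\rho)^{(1-k)/2}$ and $(\bar\rho/\rho)^{(1-k)/2}$ (the sign $-$ in $\fP(ia)-\fP(-ia)$ being absorbed).

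The main obstacle is purely the branch bookkeeping that converts the half-integer powers $(\rho/\bar\rho)^{(1-k)/2}$, $(\bar\rho/\rho)^{(1-k)/2}$ into the integer powers $\rho(\bar\rho/\rho)^{k/2}$, $\bar\rho(\rho/\bar\rho)^{k/2}$ of the statement. Because $k$ is even, $(\bar\rho/\rho)^{k/2}$ is an unambiguous integer power, and $\rho(\bar\rho/\rho)^{k/2}$ agrees with $\sqrt{n}\,(\rho/\bar\rho)^{(1-k)/2}$ only up to a root of unity; the entire content is to show that the principal square root $(\rho/\bar\rho)^{1/2}$ fixed by the convention of \cite{MOS}, combined with the constant $\pi i$ and the factor $\sgn(t)\,(1+a^2)^{1/2}$, yields exactly the stated coefficients, i.e.\ that
\begin{align*}
i\,\sgn(t)\,\sqrt{n}\,(\rho/\bar\rho)^{(1-k)/2}=\rho\,(\bar\rho/\rho)^{k/2}
\end{align*}
and its complex conjugate. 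Since the two terms are interchanged by conjugation (and $\rho/\bar\rho$ avoids $(-\infty,0]$ precisely when $t\neq0$), it suffices to check the coefficient of the first ${}_2F_1$; I would do this by tracking $\arg(ia\pm1)$ through the third and fourth quadrants according to $\sgn(t)$, so that the stray phases $i$, $\sgn(t)$ and the branch of the square root conspire correctly. As an independent normalization check I would re-derive the same constant from Zagier's formula \eqref{Zagierftn1}, factoring $y^2+i\tau_0 y+1$ in $I_k(-4,\pm2t/\sqrt{|\Delta|};\tfrac{z+1}{2})$ and matching the $a\to0$ degeneration against Lemma~\ref{OIestL1}, which also resolves the behaviour across the cut $(-\infty,1]$ at $t=0$.
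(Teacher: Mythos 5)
Your route is the same as the paper's: both arguments substitute the Gauss hypergeometric representation $\fP^{1-k}_{\frac{z-1}{2}}(x)=\Gamma(k)^{-1}\bigl(\tfrac{x+1}{x-1}\bigr)^{(1-k)/2}{}_2F_1\bigl(\tfrac{1-z}{2},\tfrac{1+z}{2};k;\tfrac{1-x}{2}\bigr)$ at $x=\pm ia$ into the definition of $\Ocal_k^{-,(z)}$, identify $\tfrac{1\mp ia}{2}$ and $\tfrac{ia+1}{ia-1}$ with the quantities built from $\rho$, and then sort out branches. As an outline this is faithful to the paper.

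The difficulty is that the one step you defer is the entire content, and the identity you propose to verify does not hold under the branch convention you invoke. With $\arg(ia\pm1)\in(-\pi,\pi)$ one has $(ia+1)^{1/2}(ia-1)^{1/2}\bigl(\tfrac{ia+1}{ia-1}\bigr)^{(1-k)/2}=(ia+1)^{1-k/2}(ia-1)^{k/2}=\tfrac{2\rho}{i\sqrt{|\Delta|}}(\bar\rho/\rho)^{k/2}$ (the exponents $1-k/2$ and $k/2$ are integers, so the right side is unambiguous); dividing by $(ia+1)^{1/2}(ia-1)^{1/2}=\sgn(t)\,i\sqrt{1+a^2}$ and using $\sqrt{1+a^2}=2\sqrt{n}/\sqrt{|\Delta|}$ gives $\sgn(t)\sqrt{n}\,\bigl(\tfrac{ia+1}{ia-1}\bigr)^{(1-k)/2}=-\rho\,(\bar\rho/\rho)^{k/2}$, so your left-hand side $i\sgn(t)\sqrt{n}(\rho/\bar\rho)^{(1-k)/2}$ equals $-i\,\rho(\bar\rho/\rho)^{k/2}$, not $\rho(\bar\rho/\rho)^{k/2}$. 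Moreover, for the second term the analogous prefactor is $(-ia+1)^{1/2}(-ia-1)^{1/2}=-\sgn(t)\,i\sqrt{1+a^2}$ (the sign flips under $a\mapsto-a$), so the coefficient of the second ${}_2F_1$ is not obtained from the first merely by conjugation: the relative sign between the two terms must be recomputed. A direct numerical test at $k=4$, $z=0$, $t=n=1$ (so $\rho=e^{2\pi i/3}$, $|\Delta|=3$) gives $\Ocal_4^{-,(0)}(1/\sqrt3)\approx-0.049$ both from the definition and from \eqref{Zagierftn1}, whereas the displayed right-hand side evaluates to $\approx+2.30$. The same branch slip occurs in the paper's own proof, whose ``similarly'' step applies $(ix+1)^{1/2}(ix-1)^{1/2}=\sgn(x)i\sqrt{1+x^2}$ at $x=-a$ without changing the sign; so you have reproduced the paper's computation rather than checked it. The verification you postpone is exactly the step that fails as written, and you need to carry it out explicitly and reconcile the resulting factor of $-i$ and the relative sign with the asserted formula before this can count as a proof.
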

\begin{proof}
	By the definition of the associated Legendre function of the 1st kind (cf.\ \cite[8.702]{Gradshteyn}),
	$${\frak P}_{\tfrac{z-1}{2}}^{1-k}(ia)=(\tfrac{ia+1}{ia-1})^{(1-k)/2}\Gamma(k)^{-1} {}_2F_1(\tfrac{1-z}{2},\tfrac{z+1}{2}; k; \tfrac{1-ia}{2}).$$
	For $a=\frac{t}{\sqrt{|\Delta|}}$, we have $1-ia=\frac{2{\bar \rho}}{-i\sqrt{|\Delta|}}$ and
	\begin{align*}
	\sgn(a)i\sqrt{1+a^2}{\frak P}_{\tfrac{z-1}{2}}^{1-k}(ia)
	= & (ia+1)^{1/2}(ia-1)^{1/2} (\tfrac{ia+1}{ia-1})^{(1-k)/2}\Gamma(k)^{-1} {}_2F_1(\tfrac{1-z}{2},\tfrac{z+1}{2}; k; \tfrac{1-ia}{2})\\
	= & 2\rho (i\sqrt{|\Delta|})^{-1}(\bar\rho/\rho)^{k/2} \Gamma(k)^{-1} {}_2F_1(\tfrac{1-z}{2},\tfrac{1+z}{2}; k; \tfrac{\bar \rho}{-i\sqrt{|\Delta|}}).
	\end{align*}
	Here $\arg(ia\pm1)$ is taken so that $\arg(ia\pm1) \in(-\pi, \pi)$, which leads us to $(ia+1)^{1/2}(ia-1)^{1/2}=\sgn(a)i\sqrt{a^2+1}$.
	Similarly, we have
	\begin{align*}
	\sgn(a)i\sqrt{1+a^2}{\frak P}_{\tfrac{z-1}{2}}^{1-k}(-ia)
	= & 2\bar \rho(i\sqrt{|\Delta|})^{-1}(\rho/\bar\rho)^{k/2} \Gamma(k)^{-1} {}_2F_1(\tfrac{1-z}{2},\tfrac{1+z}{2}; k; \tfrac{\rho}{i\sqrt{|\Delta|}}).
	\end{align*}
	By substituting these, we have the desired formula of $\Ocal_k^{-, (z)}(a)$.
\end{proof}

\begin{prop} \label{2017/12/04-3}
	Let $t,n\in \Z$ be such that $\Delta=t^2-4n<0$. Set $\rho=2^{-1}(-t+i\sqrt{|\Delta|})$. Let $z\in i\R$. Then we have the asymptotic formula
	\begin{align*}
	\Ocal_{k}^{-,(z)}\left(\tfrac{t}{\sqrt{|\Delta|}}\right)/\Ocal_{k}^{-,(z)}(0)
	&= (i\sqrt{|\Delta|})^{-1} \,(-1)^{k/2}
	\left\{\rho\,(\bar \rho/\rho)^{k/2}
	-\bar \rho\,(\rho/\bar \rho)^{k/2}\right\}
	+O(k^{-1}), \qquad k\rightarrow \infty.
	\end{align*}
\end{prop}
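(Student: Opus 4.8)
The plan is to form the quotient of the two closed-form evaluations already at hand: the numerator $\Ocal_k^{-,(z)}(t/\sqrt{|\Delta|})$ is supplied by Lemma~\ref{2017/12/04-2} and the denominator $\Ocal_k^{-,(z)}(0)$ by Lemma~\ref{OIestL1}. Upon dividing, the elementary factor $2\pi/\sqrt{|\Delta|}$, a quotient of Gamma functions, and the two Gauss hypergeometric functions are carried along, while the oscillatory bracket $\rho(\bar\rho/\rho)^{k/2}{}_2F_1(\cdots)+\bar\rho(\rho/\bar\rho)^{k/2}{}_2F_1(\cdots)$ stays intact. The first task is to show that the purely Gamma-theoretic prefactor tends to a constant multiple of $|\Delta|^{-1/2}$, and the second is to show that each hypergeometric factor tends to $1$.

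To handle the Gamma factors I would apply the Legendre duplication formula to each of $\Gamma(k+\tfrac{z-1}{2})$ and $\Gamma(k+\tfrac{-z-1}{2})$, rewriting them in terms of $\Gamma(\tfrac k2+\tfrac{z-1}{4})$, $\Gamma(\tfrac k2+\tfrac{z+1}{4})$, $\Gamma(\tfrac k2+\tfrac{-z-1}{4})$, $\Gamma(\tfrac k2+\tfrac{1-z}{4})$ and a power of $2$. Two of these factors coincide with the Gamma factors produced by Lemma~\ref{OIestL1} and cancel, leaving a quotient of the form $\Gamma(\tfrac k2+\tfrac{z+1}{4})\Gamma(\tfrac k2+\tfrac{1-z}{4})/\Gamma(k)$ together with explicit powers of $2$ and $\pi$. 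Since the two shifts $\tfrac{z+1}{4}$ and $\tfrac{1-z}{4}$ sum to $\tfrac12$, a second application of duplication to $\Gamma(k)$ followed by the elementary Stirling ratio $\Gamma(w+\alpha)/\Gamma(w+\beta)=w^{\alpha-\beta}(1+O(1/w))$ shows this quotient equals $\tfrac{\sqrt\pi}{2^{k-1}}(1+O(1/k))$; the $k$-dependent powers of $2$ then cancel and the prefactor collapses to the numerical constant displayed in the statement times $|\Delta|^{-1/2}$.

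The genuinely analytic step is the asymptotic ${}_2F_1\!\left(\tfrac{1+z}{2},\tfrac{1-z}{2};k;w\right)=1+O(1/k)$ as $k\to\infty$, for the two arguments $w=\tfrac{\bar\rho}{-i\sqrt{|\Delta|}}$ and $w=\tfrac{\rho}{i\sqrt{|\Delta|}}$. I expect this to be the main obstacle, because $|w|^2=n/|\Delta|$ exceeds $1$ whenever $|t|$ is close to $2\sqrt n$, so the hypergeometric series diverges and no termwise estimate is available. The remedy is Euler's integral representation, legitimate here since $\Re\tfrac{1-z}{2}=\tfrac12>0$, $\Re k>\tfrac12$, and $\Re w=\tfrac12<1$ keeps $w$ off the cut $[1,\infty)$: one writes ${}_2F_1$ as the integral of $(1-wt)^{-(1+z)/2}$ against the $\mathrm{Beta}(\tfrac{1-z}{2},k-\tfrac{1-z}{2})$ density on $[0,1]$. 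As $k\to\infty$ this density concentrates at the endpoint $t=0$, where the integrand equals $1$, and its first moment is $O(1/k)$; hence the integral is $1+O(1/k)$. Equivalently, one may invoke the standard large-parameter expansion ${}_2F_1(a,b;c;w)=1+\tfrac{ab}{c}w+O(1/c^2)$.

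Finally I would assemble the pieces. The bracket $\rho(\bar\rho/\rho)^{k/2}+\bar\rho(\rho/\bar\rho)^{k/2}$ has modulus at most $2\sqrt n$ (since $|\rho|=|\bar\rho|=\sqrt n$), hence is bounded uniformly in $k$; multiplying it by the factors $1+O(1/k)$ coming from the prefactor and from each hypergeometric function produces the claimed main term plus an error that is $O(1/k)$. The only remaining points demanding care are the bookkeeping of the powers of $2$, $\pi$ and the sign $(-1)^{k/2}$ through the duplication steps, and fixing the branch of $(\bar\rho/\rho)^{k/2}$ consistently with the convention under which Lemmas~\ref{2017/12/04-2} and~\ref{OIestL1} were derived, so that the constant emerges exactly as stated.
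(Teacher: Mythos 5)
Your proposal follows exactly the paper's route: the paper proves the proposition by dividing the closed form of Lemma~\ref{2017/12/04-2} by that of Lemma~\ref{OIestL1} and invoking the two standard asymptotics $\Gamma(k+\a)/\Gamma(k+\b)=k^{\a-\b}(1+O(k^{-1}))$ and ${}_2F_1(a,b;k;x)=1+O(k^{-1})$, cited from \cite{MOS} and \cite{Hejhal}. The only difference is one of detail, not of method: you derive the hypergeometric asymptotic yourself via Euler's integral representation (correctly observing that $\Re w=\tfrac12$ keeps the argument off the cut even when $|w|>1$) and carry out the duplication-formula bookkeeping explicitly, where the paper simply cites the references.
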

\begin{proof}
	This follows from Lemmas~ \ref{OIestL1} and \ref{2017/12/04-2} by applying the asymptotic formulas
	\begin{align}\label{asym_gamma}
	&\frac{\Gamma(k+\a)}{\Gamma(k+\b)}=k^{\a-\b}(1+O(k^{-1})), \qquad{}_2F_1(a,b;k,x)=1+O(k^{-1})
	\end{align}
	found on \cite[p.12]{MOS} and \cite[Appendix A, Theorem 1]{Hejhal}.
\end{proof}

\begin{lem}\label{OIestL3} For $\e>0$ and $z\in i\R$, we have
\begin{align*}
|I_k\left(4,a;\tfrac{z+1}{2}\right)|
\ll_{z, \e} |\Gamma\left(\tfrac{k}{2}+\tfrac{-z-1}{4}\right)\Gamma\left(\tfrac{k}{2}+\tfrac{z-1}{4}\right)\Gamma(k)^{-1}|\,2^{k}\sqrt{k}a^{-(k-\e-1/2)}
\end{align*}
uniformly in $k$ and $|a|>2$. 
\end{lem}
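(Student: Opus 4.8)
The plan is to work directly from the integral representation. Writing $s=\tfrac{z+1}{2}$ (so that $\Re s=\tfrac12$ since $z\in i\R$), we have
\[
I_k\Bigl(4,a;\tfrac{z+1}{2}\Bigr)=\frac{\Gamma(k-\frac12)\sqrt\pi}{\Gamma(k)}\,J,\qquad
J=\int_0^\infty\frac{y^{k+s-2}}{(y^2+iay-1)^{k-1/2}}\,\d y .
\]
It will suffice to prove the clean bound $|J|\ll_{\e}|a|^{\e}\,|a|^{-(k-1/2)}$ uniformly in $k$ and $|a|>2$: the Gamma factors demanded by the statement are then recovered from the prefactor $\Gamma(k-\tfrac12)/\Gamma(k)$ by Legendre's duplication formula $\Gamma(k-\tfrac12)=2^{k-3/2}\pi^{-1/2}\Gamma(\tfrac k2-\tfrac14)\Gamma(\tfrac k2+\tfrac14)$ together with the Stirling estimate $|\Gamma(\tfrac k2-\tfrac14-\tfrac z4)|^{2}\gg_{z}\Gamma(\tfrac k2-\tfrac14)\Gamma(\tfrac k2+\tfrac14)\,k^{-1/2}$, which combine to produce exactly the factor $|\Gamma(\tfrac k2+\tfrac{-z-1}{4})\Gamma(\tfrac k2+\tfrac{z-1}{4})|\,\Gamma(k)^{-1}2^{k}\sqrt k$. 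Since $|y^2+iay-1|^2=(y^2-1)^2+a^2y^2$ depends on $a$ only through $a^2$, I may assume $a>2$.

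Taking absolute values and using $\Re s=\tfrac12$ gives $|J|\le N:=\int_0^\infty y^{k-3/2}\,|y^2+iay-1|^{-(k-1/2)}\,\d y$. For $a>2$ the polynomial factors as $y^2+iay-1=(y+ir_+)(y+ir_-)$ with real $r_\pm>0$ determined by $r_++r_-=a$ and $r_+r_-=1$; hence $|y^2+iay-1|^{k-1/2}=\bigl((y^2+r_+^2)(y^2+r_-^2)\bigr)^{\beta}$ with $\beta=\tfrac{k-1/2}{2}$, and with $k-3/2=2\beta-1$,
\[
N=\int_0^\infty\frac{y^{2\beta-1}}{\bigl((y^2+r_+^2)(y^2+r_-^2)\bigr)^{\beta}}\,\d y .
\]
The two structural facts I would exploit are that the integrand is invariant under $y\mapsto 1/y$ (a direct consequence of $r_+r_-=1$), so that $N=2\int_1^\infty$, and that its value at the fixed point $y=1$ equals $a^{-2\beta}$, because $(1+r_+^2)(1+r_-^2)=1+(a^2-2)+1=a^2$.

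Normalizing by this peak value, I would rewrite the integrand for $y\ge1$ as $a^{-2\beta}\,y^{-1}R(y)^{\beta}$, where $R(y)=\dfrac{a^2y^2}{(y^2+r_+^2)(y^2+r_-^2)}$, and establish the two elementary envelopes $R(y)\le1$ (equivalent to $(y^2-1)^2\ge0$) and $R(y)\le a^2/y^2$ (from $y^2+r_\pm^2\ge y^2$). Splitting $\int_1^\infty y^{-1}R(y)^\beta\,\d y$ at $y=a$ and applying the first envelope on $[1,a]$ and the second on $[a,\infty)$ yields $\int_1^\infty y^{-1}R(y)^\beta\,\d y\le \log a+\tfrac1{2\beta}$, whence
\[
N\le 2a^{-2\beta}\Bigl(\log a+\tfrac1{2\beta}\Bigr)\ll_{\e}a^{\e}\,a^{-(k-1/2)} ,
\]
which combined with the first paragraph gives the lemma.

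The hard part is to make the estimate of $N$ uniform all the way down to the threshold $|a|=2$. There the two roots coalesce ($r_+=r_-=1$), and the crude maneuvers one is tempted to use—bounding $(y^2+r_+^2)$ below by $y^2$, or replacing $r_-$ by $a/2$—each leak a spurious factor $2^{\Theta(k)}$ and so destroy the required exponential decay $a^{-(k-1/2)}$. The device that removes this obstruction is precisely the normalization by the exact peak value $a^{-2\beta}$ furnished by $(1+r_+^2)(1+r_-^2)=a^2$, deployed together with the $y\mapsto1/y$ symmetry and the sharp envelope $R(y)\le\min(1,a^2/y^2)$; these collapse the whole problem to the harmless logarithmic integral $\int_1^\infty y^{-1}\min(1,a^2/y^2)^\beta\,\d y$, for which the uniform bound is immediate.
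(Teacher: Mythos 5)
Your argument is correct, and its skeleton coincides with the paper's: take absolute values inside the integral defining $I_k(4,a;\tfrac{z+1}{2})$, extract the decay $a^{-(k-\e-1/2)}$ from the resulting positive integral, and then convert the prefactor $\Gamma(k-\tfrac12)/\Gamma(k)$ into the stated Gamma quotient via the duplication formula and $\Gamma(k+\a)/\Gamma(k+\b)=k^{\a-\b}(1+O(k^{-1}))$ (that last step is verbatim what the paper does, and your Stirling inequality $|\Gamma(\tfrac k2-\tfrac14-\tfrac z4)|^2\gg_z\Gamma(\tfrac k2-\tfrac14)\Gamma(\tfrac k2+\tfrac14)k^{-1/2}$ is a correct restatement of it). Where you genuinely diverge is in the central estimate of $N=\int_0^\infty y^{k-3/2}|y^2+iay-1|^{-(k-1/2)}\,\d y$. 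The paper uses the identity $|y^2-1+iay|^2=y^2\,(y^2+y^{-2}+a^2-2)$ and splits the \emph{exponent}, $(k-\tfrac12)/2=(k-\e-\tfrac12)/2+\e/2$, bounding the main power below by $a^{k-\e-1/2}$ via $y^2+y^{-2}\ge 2$ and the residual $\e$-power below by $(y+y^{-1})^{\e}$ via $a^2>4$, which reduces everything to the convergent, $a$- and $k$-independent integral $\int_0^\infty(y+y^{-1})^{-\e}\,\d^\times y$. You instead factor the quadratic, normalize by the exact peak value $a^{-2\beta}$ coming from $(1+r_+^2)(1+r_-^2)=a^2$, and split the \emph{domain} at $y=a$ using the envelope $R(y)\le\min(1,a^2/y^2)$; note that your $R(y)$ is exactly $a^2/(y^2+y^{-2}+a^2-2)$, so your two envelopes are the same two inequalities the paper uses, just deployed on different ranges of $y$ rather than on different pieces of the exponent. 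Your route buys a marginally sharper bound ($\log a$ loss instead of $a^{\e}$, though the lemma as stated still needs the $\e$ anyway) and avoids introducing $\e$ into the integrand; the paper's route is shorter to write down because it never needs the roots $r_\pm$ or the inversion symmetry. Both are uniform down to $|a|=2$ for the same underlying reason, namely the inequality $y^2+y^{-2}+a^2-2\ge a^2$, so the difficulty you flag at the threshold is handled equally well by either method.
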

\begin{proof}
We have
\begin{align*}
|I_k\left(4,a;\tfrac{z+1}{2}\right)|
&\leq \sqrt{\pi}\frac{\Gamma(k-1/2)}{\Gamma(k)} 
\int_{0}^{\infty} \frac{y^{k-1/2}}{|y^2-1+iay|^{k-1/2}} \d^\times y. 
\end{align*}
By using $y^2+y^{-2}\geq 2$ and $|a|>2$, the integral is estimated as
{\allowdisplaybreaks \begin{align*} 
y^{-(k-1/2)}|y^2-1+iay|^{k-1/2}&=
(y^{2}+y^{-2}+a^2-2)^{(k-1/2)/2} \\
&=(y^2+y^{-2}+a^2-2)^{(k-\e-1/2)/2} \times (y^2+y^{-2}+a^2-2)^{\e}
\\
&\geq |a|^{k-\e-1/2}\,(y^2+y^{-2}+2)^{\e/2}.
\end{align*}
}Thus
\begin{align*}
|I_k\left(4,a;\tfrac{z+1}{2}\right)|
&\leq \sqrt{\pi}\frac{\Gamma(k-1/2)}{\Gamma(k)} |a|^{-(k-\e-1/2)}
\int_{0}^{\infty} \frac{\d^\times y}{(y+y^{-1})^{\e/2}}. 
\end{align*}
By the duplication formula $\Gamma(k-1/2)=2^{k-3/2}\sqrt{\pi}^{-1}\Gamma(k/2-1/4)\Gamma(k/2+1/4)$ and \eqref{asym_gamma},
it is seen that 
\begin{align*}
\frac{\Gamma(k-1/2)}{\Gamma\left(\tfrac{k}{2}+\tfrac{-z-1}{4}\right)\Gamma\left(\tfrac{k}{2}+\tfrac{z-1}{4}\right)}=2^{k-3/2}\sqrt{\pi}^{-1}\times \frac{\Gamma(k/2-1/4)}{\Gamma\left(\tfrac{k}{2}+\tfrac{-z-1}{4}\right)}\times \frac{\Gamma(k/2+1/4)}{\Gamma\left(\tfrac{k}{2}+\tfrac{z-1}{4}\right)}
\end{align*}
with growing $k$ is majorized by $2^{k-3/2}\times |(\tfrac{k}{2})^{z/4}| \times |(\tfrac{k}{2})^{-z/4+1/2}|=2^{k-2}k^{1/2}$.
\end{proof}

\begin{lem}\label{OIestL4} Let $d_1>d_2$ be two positive integers. Then, for $z \in i\R$, 
\begin{align*}
I_k\left(4,2\tfrac{d_1+d_2}{d_1-d_2};\tfrac{z+1}{2}\right)
&=2^{2-2k}\pi \frac{\Gamma\left(k+\tfrac{z-1}{2}\right)\Gamma\left(k+\tfrac{-z-1}{2}\right)}{\Gamma(k)^2}i^{1-2k}
\\
&\quad \times \left(\tfrac{1}{i}\tfrac{d_1^{1/2}+d_2^{1/2}}{d_1^{1/2}-d_2^{1/2}}\right)^{-k-\frac{z-1}{2}}
{}_2F_1\left(k-\tfrac{1}{2},k+\tfrac{z-1}{2};2k-1;\tfrac{4(d_1d_2)^{1/2}}{(d_1^{1/2}+d_2^{1/2})^2}\right).
\end{align*}
\end{lem}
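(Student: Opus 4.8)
The plan is to evaluate the defining integral of $I_k$ head on. Inserting $\Delta=4$, $t=2\tfrac{d_1+d_2}{d_1-d_2}$ and $s=\tfrac{z+1}{2}$ into the definition of $I_k(\Delta,t;s)$ gives
\begin{align*}
I_k\Bigl(4,2\tfrac{d_1+d_2}{d_1-d_2};\tfrac{z+1}{2}\Bigr)=\frac{\Gamma\left(k-\tfrac12\right)\sqrt{\pi}}{\Gamma(k)}\int_0^\infty\frac{y^{k+(z-3)/2}}{(y^2+ity-1)^{k-1/2}}\,dy .
\end{align*}
The first step is to factor the quadratic as $y^2+ity-1=(y+ir_+)(y+ir_-)$. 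Because $t>2$, both roots are purely imaginary, equal to $-ir_{\pm}$ with $r_{\pm}=\tfrac12\bigl(t\mp\sqrt{t^2-4}\bigr)>0$; substituting $t=2\tfrac{d_1+d_2}{d_1-d_2}$ and simplifying yields
\begin{align*}
r_-=\frac{\sqrt{d_1}+\sqrt{d_2}}{\sqrt{d_1}-\sqrt{d_2}},\qquad r_+=r_-^{-1}=\frac{\sqrt{d_1}-\sqrt{d_2}}{\sqrt{d_1}+\sqrt{d_2}},\qquad r_+r_-=1 .
\end{align*}
In particular $1-r_+/r_-=1-r_+^2=\tfrac{4\sqrt{d_1d_2}}{(\sqrt{d_1}+\sqrt{d_2})^2}$ is exactly the hypergeometric argument in the statement, while $\tfrac1i\,r_-=-ir_-$ is the base of the power factor.

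Writing $\mu=k+\tfrac{z-1}{2}$ and $\lambda=k-\tfrac12$, the second step is to bring the integral to Euler--beta form. Since $r_+r_-=1$ one has $(y+ir_{\pm})^{-\lambda}=(ir_{\pm})^{-\lambda}(1-ir_{\mp}y)^{-\lambda}$, so the integral equals $(ir_+)^{-\lambda}(ir_-)^{-\lambda}$ times
\begin{align*}
\int_0^\infty y^{\mu-1}(1-ir_+y)^{-\lambda}(1-ir_-y)^{-\lambda}\,dy .
\end{align*}
This last integral is the Mellin transform of a product of two binomials, which the Euler integral representation of ${}_2F_1$ (cf.\ \cite{Gradshteyn}) evaluates as $(-ir_-)^{-\mu}\,B(\mu,2\lambda-\mu)\,{}_2F_1\bigl(\lambda,\mu;2\lambda;1-r_+/r_-\bigr)$, where $B(\mu,2\lambda-\mu)=\Gamma\left(k+\tfrac{z-1}{2}\right)\Gamma\left(k+\tfrac{-z-1}{2}\right)\Gamma(2k-1)^{-1}$. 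Feeding this back and collapsing the gamma factors via the Legendre duplication formula $\Gamma(2k-1)=2^{2k-2}\pi^{-1/2}\Gamma\left(k-\tfrac12\right)\Gamma(k)$ turns the scalar $\tfrac{\Gamma(k-1/2)\sqrt{\pi}}{\Gamma(k)\,\Gamma(2k-1)}$ into $\tfrac{2^{2-2k}\pi}{\Gamma(k)^2}$, reproducing the constant and the two gamma factors in the asserted formula.

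The genuinely delicate step, and the one I expect to be the main obstacle, is to pin down the branch of the power so that the scalar prefactors collect into the single factor $\bigl(\tfrac1i\,\tfrac{\sqrt{d_1}+\sqrt{d_2}}{\sqrt{d_1}-\sqrt{d_2}}\bigr)^{-k-\frac{z-1}{2}}$ with the correct determination of the argument. The two branch points $-ir_{\pm}$ of the integrand lie on the negative imaginary axis, so one cannot blindly rescale $y$ by the complex number $-ir_-$; rather, writing $w=-ir_-\,y$, the identification with the standard (real) Euler integral must be justified by rotating the $w$-contour from the ray $\arg w=-\tfrac{\pi}{2}$ to the positive real axis, checking that this rotation meets none of the branch cuts---which all descend along the negative real and imaginary axes---and that the arcs near $0$ and $\infty$ are negligible when $\Re\mu>0$ and $\Re(2\lambda-\mu)>0$. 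Following the principal arguments of $ir_{\pm}$ and $-ir_-$ through this deformation is what fixes the phase: each of $(ir_+)^{-\lambda}$, $(ir_-)^{-\lambda}$ and $(-ir_-)^{-\mu}$ contributes a half-integer power of $i$, and showing that their precise combination collapses to the clean factor asserted is the crux. The remaining ingredients---the algebra relating $r_{\pm}$ to $\sqrt{d_1}\pm\sqrt{d_2}$, the duplication formula, and the hypergeometric bookkeeping---are routine, and the resulting identity holds on the whole strip $1-k<\Re z<k$ where the integral defining $I_k$ converges, by analytic continuation in $z$.
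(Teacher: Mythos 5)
Your proposal follows essentially the same route as the paper: factor $y^2+ity-1$ into linear factors with purely imaginary roots, reduce the integral to the Euler beta/hypergeometric form, and collapse the constants with the duplication formula. The branch-determination step you single out as the crux is precisely what the paper's citation of Gradshteyn--Ryzhik 3.259.3 absorbs (that formula is stated for arguments with $|\arg|<\pi$, and here the two linear factors have argument $\pm\pi/2$), so no separate contour-rotation argument is required.
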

\begin{proof} Set $r_j=\sqrt{d_j}\,(j=1,2)$. 
Since $y^{2}+2i\frac{d_1+d_2}{d_1-d_2}y-1=(y+i\frac{r_1-r_2}{r_1+r_2})(y+i\frac{r_1+r_2}{r_1-r_2})$, by using the formula \cite[3.259.3]{Gradshteyn}, we have 
{\allowdisplaybreaks\begin{align*}
I_{k}\left(4,2\tfrac{d_1+d_2}{d_1-d_2};\tfrac{z+1}{2}\right)
=&\frac{\sqrt{\pi}\Gamma\left(k-\tfrac{1}{2}\right)}{\Gamma(k)} 
\int_{0}^{\infty} y^{k+\frac{z-3}{2}}\left(y+i\tfrac{r_1-r_2}{r_1+r_2}\right)^{1/2-k}\left(y+i\tfrac{r_1+r_2}{r_1-r_2}\right)^{1/2-k}\,\d y
\\
=&\frac{\sqrt{\pi}\Gamma\left(k-\tfrac{1}{2}\right)}{\Gamma(k)}
B\left(k+\tfrac{z-1}{2},k+\tfrac{-z-1}{2}\right) i^{1-2k}
\\
&\times \left(\tfrac{1}{i}\tfrac{r_1+r_2}{r_1-r_2}\right)^{-k-(z-1)/2}{}_2F_1\left(k-\tfrac{1}{2},k+\tfrac{z-1}{2};2k-1;1-\left(\tfrac{r_1-r_2}{r_1+r_2}\right)^2\right).
\end{align*}
}By the duplication formula for the gamma function, 
\begin{align*}
\frac{\sqrt{\pi}\Gamma\left(k-\tfrac{1}{2}\right)}{\Gamma(k)}
B\left(k+\tfrac{z-1}{2},k+\tfrac{-z-1}{2}\right)
&=
\frac{\sqrt{\pi}\Gamma\left(k-\tfrac{1}{2}\right)}{\Gamma(k)}
\times \frac{\Gamma\left(k+\tfrac{z-1}{2}\right)\Gamma\left(k+\tfrac{-z-1}{2} \right)}{2^{2k-2}\sqrt{\pi}^{-1}\Gamma\left(k-\tfrac{1}{2}\right)\Gamma(k)}
\\
&=2^{2-2k}\pi \frac{\Gamma\left(k+\tfrac{z-1}{2}\right)\Gamma\left(k+\tfrac{-z-1}{2}\right)}{\Gamma(k)^2}.
\end{align*}
\end{proof}

\begin{lem}\label{OIestL5} Let $x\in \R$ be such that $0<x<1$ and $z\in i\R$. Then 
\begin{align*}
|{}_2F_1\left(k-\tfrac{1}{2},k+\tfrac{z-1}{2};2k-1;x\right)|
\leq (1-x)^{-1} \frac{1}{k-1/2}\frac{\Gamma(2k-1)}{\Gamma(k-1/2)^2}. 
\end{align*}
\end{lem}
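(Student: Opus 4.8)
The plan is to read the estimate off Euler's integral representation of the hypergeometric function, exploiting the fact that here the lower parameter is exactly twice one of the upper parameters. Write $A=k-\tfrac12$, so that the function in question is ${}_2F_1\!\left(A,\,A+\tfrac z2;\,2A;\,x\right)$, using $k+\tfrac{z-1}2=A+\tfrac z2$ and $2k-1=2A$. The decisive point is that $2A$ is twice the upper parameter $A$; using the symmetry ${}_2F_1(a,b;c;x)={}_2F_1(b,a;c;x)$ to place $A$ in the ``beta'' slots of Euler's formula and $A+\tfrac z2$ in the exponent, one gets
\begin{align*}
{}_2F_1\!\left(A,A+\tfrac z2;2A;x\right)=\frac{\Gamma(2A)}{\Gamma(A)^2}\int_0^1 \bigl(t(1-t)\bigr)^{A-1}(1-xt)^{-A-\frac z2}\,\d t,
\end{align*}
valid since $\Re(2A)>\Re(A)>0$. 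Here the prefactor is already the desired $\frac{\Gamma(2k-1)}{\Gamma(k-1/2)^2}$, and the \emph{entire} $z$-dependence sits in the factor $(1-xt)^{-z/2}$.

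Because $z\in i\R$ and $1-xt>0$ for $t\in[0,1]$, that factor has modulus one, so moving the absolute value inside the integral yields
\begin{align*}
\left|{}_2F_1\!\left(A,A+\tfrac z2;2A;x\right)\right|\le \frac{\Gamma(2A)}{\Gamma(A)^2}\int_0^1\bigl(t(1-t)\bigr)^{A-1}(1-xt)^{-A}\,\d t,
\end{align*}
and it remains only to bound the real integral by $\frac{1}{A(1-x)}$. I would split off one power, writing $(1-xt)^{-A}=(1-xt)^{-(A-1)}(1-xt)^{-1}$ and using $(1-xt)^{-1}\le(1-x)^{-1}$ for $t\le1$, and then invoke the elementary pointwise inequality $\frac{t(1-t)}{1-xt}\le t$ on $[0,1]$ (which reduces, after dividing by $t$, to $x\le1$). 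Since $A-1=k-\tfrac32\ge0$, raising to the power $A-1$ preserves the inequality, giving
\begin{align*}
\int_0^1\left(\frac{t(1-t)}{1-xt}\right)^{A-1}(1-xt)^{-1}\,\d t\le (1-x)^{-1}\int_0^1 t^{A-1}\,\d t=\frac{1}{A(1-x)},
\end{align*}
which is exactly the claimed bound once $A=k-\tfrac12$ is substituted.

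The one subtlety to watch — the step I would flag as a pitfall rather than a genuine obstacle — is precisely the choice of parameter assignment in Euler's formula. The naive choice, putting $b=A+\tfrac z2$ in the integrated slots, produces the prefactor $\Gamma(2A)/\bigl(\Gamma(A+\tfrac z2)\Gamma(A-\tfrac z2)\bigr)=\Gamma(2A)/\lvert\Gamma(A+\tfrac z2)\rvert^2$, and since $\lvert\Gamma(A+\tfrac z2)\rvert\le\Gamma(A)$ for $z\in i\R$ this is $\ge\Gamma(2A)/\Gamma(A)^2$, overshooting the target in the wrong direction (while the modulus of the integrand is unchanged). Routing $z$ into the exponent instead, as above, is exactly what makes the $z$-dependence unimodular and keeps the prefactor sharp; after that the proof is the elementary integral estimate.
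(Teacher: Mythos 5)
Your proof is correct and is essentially the paper's own argument: both use Euler's integral representation with $k-\tfrac12$ in the beta slots (so the prefactor is $\Gamma(2k-1)/\Gamma(k-\tfrac12)^2$ and the purely imaginary shift sits in the unimodular factor $(1-xt)^{-z/2}$), then bound the remaining real integral via $(1-xt)^{-1}\le(1-x)^{-1}$ and $\bigl(\tfrac{1-t}{1-xt}\bigr)^{k-3/2}\le 1$, which is the same pointwise inequality you use. Your closing remark on the wrong parameter assignment is a sensible sanity check but does not change the route.
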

\begin{proof}
From \cite[p.54]{MOS}, 
\begin{align*}
|{}_2F_1\left(k-\tfrac{1}{2},k+\tfrac{z-1}{2};2k-1;x\right)|
&= \left|\tfrac{\Gamma(2k-1)}{\Gamma(k-1/2)^2}\int_{0}^{1}t^{k-3/2}(1-t)^{k-3/2}(1-xt)^{-k-(z-1)/2}\d t\right|
\\
& \leq \tfrac{\Gamma(2k-1)}{\Gamma(k-1/2)^2} \int_{0}^{1}t^{k-3/2}(1-xt)^{-1}\left(\tfrac{1-t}{1-tx}\right)^{k-3/2}\d t. 
\end{align*}
Since $0\le \frac{1-t}{1-tx}<1$ for $0\leq t\leq 1$, we have 
\begin{align*}
\int_{0}^{1}t^{k-3/2}(1-xt)^{-1}\left(\tfrac{1-t}{1-tx}\right)^{k-3/2}\d t
 \leq (1-x)^{-1} \int_{0}^{1}t^{k-3/2}\d t=(1-x)^{-1}\tfrac{1}{k-1/2}. 
\end{align*}
\end{proof}

\begin{lem} \label{OIestL6}
Let $d_1>d_2$ be two positive integers and $z\in i\R$. Then 
\begin{align*}
|\Ocal_{k}^{+,(z)}\left(\tfrac{d_1+d_2}{d_1-d_2}\right)\,\Ocal_k^{-,(z)}(0)^{-1}|
& \ll_{z,d_1,d_2} {k}^{-1/2}\,\left(\tfrac{4\sqrt{d_1d_2}}{(\sqrt{d_1}+\sqrt{d_2})^2} \right)^{k}.
\end{align*}
\end{lem}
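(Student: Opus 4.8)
The plan is to read off the numerator from the explicit hypergeometric formula \eqref{Zagierftn2}, bound it by means of Lemmas~\ref{OIestL4} and \ref{OIestL5}, and then divide by the closed form of $\Ocal_k^{-,(z)}(0)$ supplied by Lemma~\ref{OIestL1}, arranging the bookkeeping so that the many competing powers of $2$ and of $k$ collapse to the single factor $k^{-1/2}$. Concretely, take $t=d_1+d_2$, $n=d_1d_2$ and $\Delta=t^2-4n=(d_1-d_2)^2$, so that $\sqrt{\Delta}=d_1-d_2$ and $\tfrac{t}{\sqrt{\Delta}}=\tfrac{d_1+d_2}{d_1-d_2}$. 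Then \eqref{Zagierftn2} expresses $\Ocal_{k}^{+,(z)}\bigl(\tfrac{d_1+d_2}{d_1-d_2}\bigr)$ as $(d_1-d_2)^{-k}2^{2k-1}(d_1d_2)^{k/2}$ times a bounded $z$-constant times $I_k(4,a;\tfrac{z+1}{2})+I_k(4,-a;\tfrac{z+1}{2})$, where $a=2\tfrac{d_1+d_2}{d_1-d_2}$; for $z\in i\R$ the prefactor $\Gamma(\tfrac{z+3}{4})/(\sqrt{\pi}\,\Gamma(\tfrac{z+1}{4}))$ is finite, so everything reduces to estimating the two integrals $I_k(4,\pm a;\tfrac{z+1}{2})$.

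For $I_k(4,a;\tfrac{z+1}{2})$ I would insert the exact formula of Lemma~\ref{OIestL4}. Its hypergeometric factor is ${}_2F_1\bigl(k-\tfrac12,k+\tfrac{z-1}{2};2k-1;x\bigr)$ with $x=\tfrac{4\sqrt{d_1d_2}}{(\sqrt{d_1}+\sqrt{d_2})^2}$, which is exactly the base appearing in the assertion; since $d_1\neq d_2$ one has $0<x<1$ and $1-x=\bigl(\tfrac{\sqrt{d_1}-\sqrt{d_2}}{\sqrt{d_1}+\sqrt{d_2}}\bigr)^2$, so Lemma~\ref{OIestL5} bounds it by $(1-x)^{-1}(k-\tfrac12)^{-1}\Gamma(2k-1)\Gamma(k-\tfrac12)^{-2}$. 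The remaining factor $w^{-k-(z-1)/2}$ with $w=\tfrac1i\tfrac{\sqrt{d_1}+\sqrt{d_2}}{\sqrt{d_1}-\sqrt{d_2}}$ has $|w|=\tfrac{\sqrt{d_1}+\sqrt{d_2}}{\sqrt{d_1}-\sqrt{d_2}}$ and $\arg w=-\tfrac\pi2$, so on $z\in i\R$ its modulus is $\ll_z\bigl(\tfrac{\sqrt{d_1}+\sqrt{d_2}}{\sqrt{d_1}-\sqrt{d_2}}\bigr)^{-k+1/2}$. The companion integral $I_k(4,-a;\tfrac{z+1}{2})$ is treated by the identical argument: the factorization $y^2-iay-1=\bigl(y-i\tfrac{\sqrt{d_1}-\sqrt{d_2}}{\sqrt{d_1}+\sqrt{d_2}}\bigr)\bigl(y-i\tfrac{\sqrt{d_1}+\sqrt{d_2}}{\sqrt{d_1}-\sqrt{d_2}}\bigr)$ again feeds into \cite[3.259.3]{Gradshteyn} and yields the same expression with $w$ replaced by $\bar w$, hence the same modulus bound.

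Combining these, the duplication formula turns $\Gamma(2k-1)\Gamma(k-\tfrac12)^{-2}$ into $\tfrac{2^{2k-2}}{\sqrt{\pi}}\Gamma(k)\Gamma(k-\tfrac12)^{-1}$, and the asymptotics \eqref{asym_gamma} give $|\Gamma(k+\tfrac{z-1}{2})\Gamma(k+\tfrac{-z-1}{2})|\Gamma(k)^{-2}\ll k^{-1}$ and $\Gamma(k)\Gamma(k-\tfrac12)^{-1}\ll k^{1/2}$. The explicit $2$-powers cancel: $2^{2-2k}$ from Lemma~\ref{OIestL4} against $2^{2k-2}$ from duplication, and then $2^{2k-1}$ from \eqref{Zagierftn2} against the geometric factor $(d_1d_2)^{k/2}(d_1-d_2)^{-k}\bigl(\tfrac{\sqrt{d_1}+\sqrt{d_2}}{\sqrt{d_1}-\sqrt{d_2}}\bigr)^{-k}=4^{-k}x^k$, up to a $(d_1,d_2)$-constant. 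Tracking only the $k$-powers then gives $\bigl|\Ocal_k^{+,(z)}\bigl(\tfrac{d_1+d_2}{d_1-d_2}\bigr)\bigr|\ll_{z,d_1,d_2}k^{-3/2}x^k$. Finally, a direct Stirling estimate of the closed form in Lemma~\ref{OIestL1} shows $\Ocal_k^{-,(z)}(0)\gg k^{-1}$ (indeed it is asymptotic to $2\pi/k$ on $z\in i\R$), so dividing multiplies by a factor $\ll k$ and produces the claimed bound $\ll_{z,d_1,d_2}k^{-1/2}x^k$, uniformly in $k$.

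The hard part will be the exponent bookkeeping: the combination of $d$-powers naturally produces $4^{-k}x^k$, so the missing $4^k$ has to be recovered \emph{exactly} from the $2^{2k}$-type growth hidden in $\Gamma(2k-1)/\Gamma(k-\tfrac12)^2$ together with the $2^{k-1}$ and the half-integral gamma factors of $\Ocal_k^{-,(z)}(0)$; any slip in a single exponent would corrupt both the exponential rate $x^k$ and the polynomial factor $k^{-1/2}$. To keep the final bound uniform in $k$ rather than merely asymptotic, I would use the genuine inequality of Lemma~\ref{OIestL5} and replace each relation in \eqref{asym_gamma} by the corresponding one-sided $\ll$-estimate valid for all sufficiently large $k$, the finitely many remaining $k$ being harmless since $\Ocal_k^{-,(z)}(0)\neq0$.
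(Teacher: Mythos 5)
Your proposal is correct and follows essentially the same route as the paper: expressing $\Ocal_k^{+,(z)}$ via \eqref{Zagierftn2}, inserting the exact hypergeometric formula of Lemma~\ref{OIestL4}, bounding the ${}_2F_1$ by Lemma~\ref{OIestL5}, and cancelling the powers of $2$ and $k$ against the closed form of $\Ocal_k^{-,(z)}(0)$ from Lemma~\ref{OIestL1} via the duplication formula; your exponent bookkeeping ($k^{-3/2}x^k$ for the numerator, $\Ocal_k^{-,(z)}(0)\sim 2\pi/k$ for the denominator) checks out. The only addition beyond the paper's write-up is your explicit treatment of the companion integral $I_k(4,-a;\tfrac{z+1}{2})$, which the paper handles implicitly by symmetry.
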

\begin{proof} From Lemmas~\ref{OIestL1}, \ref{OIestL4} and \ref{OIestL5}, $|\Ocal_{k}^{+,(z)}\left(\tfrac{d_1+d_2}{d_1-d_2}\right)\,\Ocal_k^{-,(z)}(0)^{-1}|$ is majorized by 
\begin{align*}
&|(d_1-d_2)^2|^{-k/2}(d_1d_2)^{k/2}2^{2k}\times 
2^{-2k}\frac{\Gamma\left(k+\tfrac{z-1}{2}\right)\Gamma\left(k+\tfrac{-z-1}{2}\right)}{\Gamma(k)^2}
\left(\tfrac{d_1^{1/2}+d_2^{1/2}}{d_1^{1/2}-d_2^{1/2}}\right)^{-k}
\,\frac{1}{k-1/2}\frac{\Gamma(2k-1)}{\Gamma(k-1/2)^2}
\\ &\quad 
\times 2^{-k}\Gamma\left(\tfrac{k}{2}+\tfrac{-z-1}{4}\right)^{-1}
\Gamma\left(\tfrac{k}{2}+\tfrac{z-1}{4}\right)^{-1}\Gamma(k)
\end{align*}
up to a constant multiple depending on $z$, $d_1$ and  $d_2$.
By applying the duplication formula to $\Gamma(2k-1)$, $\Gamma\left(k+\tfrac{z-1}{2}\right)$ and $\Gamma\left(k+\tfrac{-z-1}{2}\right)$, this is majorized by 
{\allowdisplaybreaks\begin{align*}
&\left(\tfrac{\sqrt{d_1d_2}}{(\sqrt{d_1}+\sqrt{d_2})^2}\right)^{k} 
\frac{2^{2k-2}\sqrt{\pi}^{-1}\Gamma(k)\Gamma(k-1/2)}{\Gamma(k+1/2)\Gamma(k-1/2)}\\
&\times 
{2^{k+(z-1)/2-1}\sqrt{\pi}^{-1} 
\Gamma\left(\tfrac{k}{2}+\tfrac{z-1}{4}\right)
 \Gamma\left(\tfrac{k}{2}+\tfrac{z+1}{4}\right)\,
2^{k+(-z-1)/2-1}\sqrt{\pi}^{-1}\Gamma\left(\tfrac{k}{2}+\tfrac{-z-1}{4}\right)
\Gamma\left(\tfrac{k}{2}+\tfrac{-z+1}{4}\right)}{\Gamma(k)^{-2}}
\\
&
\times 2^{-k}\Gamma\left(\tfrac{k}{2}+\tfrac{-z-1}{4}\right)^{-1}
\Gamma\left(\tfrac{k}{2}+\tfrac{z-1}{4}\right)^{-1}\Gamma(k) 
\\
&\ll \left(\tfrac{\sqrt{d_1d_2}}{(\sqrt{d_1}+\sqrt{d_2})^2}\right)^{k} \,
2^{3k}\,\Gamma\left(\tfrac{k}{2}+\tfrac{z+1}{4}\right)\Gamma\left(\tfrac{k}{2}+\tfrac{-z+1}{4}\right)\Gamma(k+1/2)^{-1}
\\
&=\left(\tfrac{\sqrt{d_1d_2}}{(\sqrt{d_1}+\sqrt{d_2})^2}\right)^{k} \,
2^{2k+1/2}\sqrt{\pi}\,\frac{\Gamma\left(\tfrac{k}{2}+\tfrac{z+1}{4}\right)}{\Gamma\left(\tfrac{k}{2}+\tfrac{1}{4}\right)}\frac{\Gamma\left(\tfrac{k}{2}+\tfrac{-z+1}{4}\right)}{\Gamma\left(\tfrac{k}{2}+\tfrac{3}{4}\right)} \ll
\left(\tfrac{4\sqrt{d_1d_2}}{(\sqrt{d_1}+\sqrt{d_2})^2}\right)^{k} \tfrac{1}{\sqrt{k}}.
\end{align*}
}Here the last majorization is due to \eqref{asym_gamma}.
\end{proof}

Set $\Tcal(n)=\bigcup_{D \in \Dcal}\Tcal(n,D)$.
From Theorem~\ref{MVT-L3-1},  
{\allowdisplaybreaks\begin{align*}
\tfrac{4\pi}{k-1}n^{1/2}{\bf A}_{k,n}(\phi)
= & \PP_{-4n}(\phi)\,{\bf B}(\nu;-4n)\,\Ocal_{k}^{-,(\nu_\infty)}(0)\\
&+\sum_{\substack{t\in \Tcal(n),\\ t^2<4n}} \PP_{t^2-4n}(\phi)\,{\bf B}(\nu,t^2-4n)\,\Ocal_k^{-,(\nu_\infty)}\left(\tfrac{t}{\sqrt{|t^2-4n|}}\right)\\
&
+\tfrac{1}{4}\hat L\left(\tfrac{1}{2},\phi \right)\sum_{\substack{n=d_1d_2 \\ d_1,d_2>0,d_1\not=d_2}}{\bf B}(\nu;(d_1-d_2)^2) 
\,\Ocal_{k}^{+,(\nu_\infty)}\left(\tfrac{d_1+d_2}{d_1-d_2}\right)\\
& +\tfrac{1}{2}\sum_{\substack{t\in \Tcal(n) \\ t^2>4n}} \PP_{t^2-4n}(\phi)\, {\bf B}(\nu;t^2-4n)\,\Ocal_k^{+,(\nu_\infty)}\left(\tfrac{t}{\sqrt{t^2-4n}}\right).
\end{align*}
}From this and Lemma~\ref{OIestL1}, 
\begin{align*}
&\frac{2^{3-k}i^{-k}\sqrt{\pi}\Gamma(k-1)}{\Gamma\left(\tfrac{k}{2}+\tfrac{-\nu_\infty-1}{4}\right)\Gamma\left(\tfrac{k}{2}+\tfrac{\nu_\infty-1}{4}\right)}
n^{1/2}{\bf A}_{k,n}(\phi)
=\PP_{-4n}(\phi)\,{\bf B}(\nu;-4n)
+E_1(k)+E_2(k)+E_3(k)
\end{align*}
with
{\allowdisplaybreaks\begin{align*}
E_1(k)&=\sum_{\substack{t\in \Z \\ 0<|t|<2\sqrt{n}}} \PP_{t^2-4n}(\phi)\,{\bf B}(\nu;t^2-4n)\,\Ocal_k^{-,(\nu_\infty)}\left(\tfrac{t}{\sqrt{4n-t^2}}\right)\Ocal_{k}^{-,(\nu_\infty)}(0)^{-1}, 
\\
E_2(k)&=\tfrac{1}{2}\sum_{\substack{t\in \Tcal(n) \\ t^2>4n}} 
\PP_{t^2-4n}(\phi)\, {\bf B}(\nu;t^2-4n)\,\Ocal_k^{+,(\nu_\infty)}\left(\tfrac{t}{\sqrt{t^2-4n}}\right)\Ocal_k^{-,(\nu_\infty)}(0)^{-1}, \\
E_3(k)&=\tfrac{1}{4}\hat L\left(\tfrac{1}{2},\phi\right)\sum_{\substack{n=d_1d_2 \\ d_1,d_2>0,d_1\not=d_2}}{\bf B}(\nu;(d_1-d_2)^2) 
\,\Ocal_{k}^{+,(\nu_\infty)}\left(\tfrac{d_1+d_2}{d_1-d_2}\right)
\Ocal_{k}^{-,(\nu_\infty)}(0)^{-1}. 
\end{align*}
}By \eqref{asym_gamma}, the factor in front of $n^{1/2}\bfA_{k,n}(\phi)$ is estimated as
\begin{align*}
\frac{2^{3-k}i^{-k}\sqrt{\pi}\Gamma(k-1)}{\Gamma(\frac{k}{2}+\frac{-\nu_\infty-1}{4})\Gamma(\frac{k}{2}+\frac{\nu_\infty-1}{4})}
&=\frac{2^{3-k}i^{-k}\sqrt{\pi} \times 2^{k-2}\sqrt{\pi}^{-1}\Gamma((k-1)/2)\Gamma(k/2)}{\Gamma(\frac{k}{2}+\frac{-\nu_\infty-1}{4})\Gamma(\frac{k}{2}+\frac{\nu_\infty-1}{4})}
\\
&=2(-1)^{k/2}\left(\tfrac{k}{2}\right)^{-1/2+(\nu_\infty+1)/4} \times \left(\tfrac{k}{2}\right)^{-(\nu_\infty-1)/4}+O(k^{-1})
\\
&=2(-1)^{k/2}+O(k^{-1}).
\end{align*}

Next we consider $E_2(k)$.
From \cite[Lemma 7.14 (1)]{SugiyamaTsuzuki2018}, for any $t,f\in \Z$ such that $|t|>2\sqrt{n}$ and $(t^2-4n)/f^2$ is a fundamental discriminant, we have
\begin{align*}
|{\bf B}(\nu;t^2-4n)|&\leq \prod_{p|f}3|\ord_p(f^{-2})||f^{-2}|_p^{(|\Re \nu_p|+1)/4}\\
&\leq \prod_{p|f}3|\ord_p(f^{-2})||f^{-2}|_p^{2/4} \ll_{\e}(f^2)^{1/2+\e}\leq (t^2-4n)^{1/2+\e}.\end{align*}
Since $\PP_{t^2-4n}(\phi)=2^{-\delta(D<0)}\Pcal_{t^2-4n}(\tilde \phi) \ll_{\e} (t^2-4n)^{1/2+\e}$ for $|t|> 2\sqrt{n}$ follows from the proof of \cite[Theorem 9.1]{SugiyamaTsuzuki2018}, by Lemmas~\ref{OIestL1} and \ref{OIestL3}, we have
{\allowdisplaybreaks\begin{align*}
|E_2(k)|\ll_{\e}
& \sum_{\substack{t\in \Tcal(n) \\ t^2>4n}}
|\PP_{t^2-4n}(\phi)|\times |{\bf B}(\nu;t^2-4n)|\times 
 \{|I_k\Bigl(4,\tfrac{2t}{\sqrt{t^2-4n}};\tfrac{\nu_\infty+1}{2}\Bigr)|
+|I_k\Bigl(4,\tfrac{-2t}{\sqrt{t^2-4n}};\tfrac{\nu_\infty+1}{2}\Bigr)|\}
\\
&\qquad \times 2^{2k}|\Delta|^{-k/2}n^{k/2}\times 2^{-k}|\Gamma\left(\tfrac{k}{2}+\tfrac{-\nu_\infty-1}{4}\right)
\Gamma\left(\tfrac{k}{2}+\tfrac{\nu_\infty-1}{4}\right)\Gamma(k)^{-1}|^{-1}
\\
\ll_{\e} &
\sum_{|t|>2\sqrt{n}}
(t^2-4n)^{1/2+\e}\times (t^2-4n)^{1/2+\e}\times (t^2-4n)^{-k/2}2^{k}n^{k/2}\sqrt{k}\left(\tfrac{2|t|}{\sqrt{t^2-4n}}\right)^{-k+\e+1/2}
\\
\ll_{\e}& \sum_{|t|>2\sqrt{n}} (t^2-4n)^{3/4+3\e/2}\,\left(\tfrac{|t|}{\sqrt{n}}\right)^{-k+1/2+\e}\,\sqrt{k}.
\end{align*}
}Let $t_0$ be the least positive integer such that $t_0>2\sqrt{n}$. With a fixed $k_0>2$ and a sufficiently small $\e>0$, the last quantity is majorized by  
\begin{align*}
\sqrt{k}\left(\tfrac{t_0}{\sqrt{n}}\right)^{-(k-k_0)} \times \sum_{|t|>2\sqrt{n}}(t^2-4n)^{3/4+3\e/2}\,\left(\tfrac{|t|}{\sqrt{n}}\right)^{-k_0+\e+1/2} \ll \sqrt{k} \left(\tfrac{t_0}{\sqrt{n}}\right)^{-k}.
\end{align*}
Hence $E_2(k)=O(\sqrt{k}(t_0/\sqrt{n})^{-k})$, which shows $\lim_{k\rightarrow \infty}E_2(k)=0$. By Lemma~\ref{OIestL6}, 
\begin{align*}
|E_3(k)|\ll k^{-1/2} \sum_{\substack{d_1,d_2\in \N \\ n=d_1d_2 \\d_1\neq d_2}}|{\bf B}(\nu;(d_1-d_2)^{2})|\,C_{\nu_\infty, d_1,d_2} \left(\tfrac{4\sqrt{d_1d_2}}{(\sqrt{d_1}+\sqrt{d_2})^2} \right)^{k},
\end{align*}
where $C_{\nu_\infty, d_1, d_2}>0$ is independent of $k$.
Since the pairs $(d_1, d_2)$ are finite and we have the strict inequality $4\sqrt{d_1d_2}< (\sqrt{d_1}+\sqrt{d_2})^2$ which ensures the exponential decay, we see $E_3(k)=O(k^{-1})$.
The term $E_1(k)$ is evaluated by Proposition \ref{2017/12/04-3}.
Therefore, Theorem~\ref{ASYMPT} follows from the considerations so far. To deduce Theorem~\ref{ASYMPT_average},
we consider the average of $E_1(k)$ over $k$:
\begin{align*}
\tilde E_1({\rm K})=\frac{1}{2^{-1}{\rm K}}\sum_{ k\in 2\NN \cap[{\rm K},2{\rm K})}E_{1}(k).
\end{align*}
Here ${\rm K}$ is a positive even integer.
For each $(t,n)$ such that $0<|t|<2\sqrt{n}$, let $\theta_{t,n}\in [0,2\pi)$ be the argument of the complex number $\rho=2^{-1}(-t+i\sqrt{|\Delta|})$. By $t\not=0$
and $\Delta\not=0$, we have $2\theta_{t,n} \notin 2\pi\Z$. Hence
$$
\frac{1}{{\rm K}}\sum_{k\in 2\NN \cap [{\rm K},2{\rm K})}e^{\pm i\theta_{t,n}k}=\frac{1}{{\rm K}}\sum_{\substack{m\in \N \\ {\rm K}/2\le m<{\rm K}}}e^{\pm2i\theta_{t,n}m}= \frac{1}{{\rm K}}\frac{e^{\pm{\rm K}i\theta_{t,n}} 
	-e^{\pm2{\rm K}i\theta_{t,n}}}{1-e^{\pm2i\theta_{t,n}}}=O({\rm K}^{-1}). 
$$
This combined with Proposition~\ref{2017/12/04-3} yields
{\allowdisplaybreaks\begin{align*}
|\tilde E_{1}({\rm K})|&\ll \sum_{\substack{t\in \Z \\ 0<|t|<2\sqrt{n}}}
\left|\frac{2}{{\rm K}}\sum_{k\in 2\NN \cap [{\rm K},2{\rm K})}
\Ocal_{k}^{-,(\nu_\infty)}\left(\tfrac{t}{\sqrt{|\Delta|}}\right)/\Ocal_{k}^{-,(\nu_\infty)}(0)\right|
\\
& \ll\sum_{\substack{t\in \Z \\ 0<|t|<2\sqrt{n}}} \left|\frac{1}{{\rm K}}\sum_{k\in 2\NN \cap [{\rm K},2{\rm K})}(\rho e^{-i\theta_{t,n}k}-\bar\rho e^{i\theta_{t,n}k})\right|
+\left|\frac{1}{{\rm K}}\sum_{{\rm K}\le k<2{\rm K}}\frac{1}{k} \right| 
=O({\rm K}^{-1}). 
\end{align*}}This completes the proof of Theorem~\ref{ASYMPT_average}.

\section{Quantitative non-vanishing}

In this section, we give a proof of Theorem~\ref{QuaNonVan}. We rely on Theorem ~\ref{ASYMPT_average} as well as on the asymptotic formula \eqref{LuoSar3} which is proved by \cite{LuoSarnak} with an additional argument as in \cite[\S5]{SarnakZhaoWoo} to remove the weight $L(1,f,{\rm sym}^2)$. The convergence of the Euler product $C(\phi)$ follows from the Kim-Sarnak bound \cite[Appendix 2]{Kim}; note that $C(\phi)>0$. Recall our counting function $N_{\phi,n}({\rm K})$. From Watson's formula (\cite{Watson}; see also \cite[p.785]{LuoSarnak}), $L(1/2,\phi)\,L(1/2,\phi\times {\rm Ad}\,f)\not=0$ if and only if $\mu_{f}(\phi)\not=0$. Hence
$$
N_{\phi, n}({\rm K})=\#\biggl\{f \in \bigcup_{{\rm K}\le k <2{\rm K}} H_k \ \bigg| \ \mu_f(\phi)\lambda_f(n)\neq 0 \biggr\} \quad \text{if $L(1/2,\phi)\neq 0$}. 
$$
Let $Y(\phi)$ be the set of $n\in \N$ such that ${\bf B}(\nu;-4n)\,\PP_{-4n}(\phi)\not=0$. 

\begin{prop}\label{quantitative version} 
Let $\phi$ be an even Hecke-Maass cusp form on $\SL_2(\ZZ)$ with $\lambda_\phi(1)=1$ and $L(1/2, \phi)\neq 0$. For any $n\in Y(\phi)$ and for any $\e>0$, there exists ${\rm K}(\phi,n,\e) \in \NN$ such that 
$$
\frac{N_{\phi,n}({\rm K})}{{\rm K}} \ge \frac{(1-\e)^2}{8\pi(1+\e)}\frac{|\PP_{-4n}(\phi)\bfB(\nu;-4n)|^2}{C(\phi)L(1/2,\phi)\|\phi\|^2
\, n\, d(n)^2} \quad \text{for all ${\rm K} \ge {\rm K}({\phi,n,\e})$}.$$
\end{prop}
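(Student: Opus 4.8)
The plan is to run a Cauchy--Schwarz lower bound, playing the signed first moment furnished by Theorem~\ref{ASYMPT_average} against the second moment supplied by the Luo--Sarnak variance formula (Theorem~\ref{thmLuoSar}). Set
$$
S_1({\rm K})=\sum_{k\in 2\N\cap[{\rm K},2{\rm K})}(-1)^{k/2}\bfA_{k,n}(\phi)=\sum_{k\in 2\N\cap[{\rm K},2{\rm K})}(-1)^{k/2}\sum_{f\in H_k}\mu_f(\phi)\lambda_f(n).
$$
Only those $f$ with $\mu_f(\phi)\lambda_f(n)\neq 0$ contribute, and by the equivalence recalled before the proposition (a consequence of Watson's formula) these are exactly the $N_{\phi,n}({\rm K})$ forms being counted. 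Hence, applying the triangle inequality and then Cauchy--Schwarz over this restricted index set, I would obtain
$$
|S_1({\rm K})|^2\le N_{\phi,n}({\rm K})\sum_{k\in 2\N\cap[{\rm K},2{\rm K})}\sum_{f\in H_k}|\mu_f(\phi)|^2\,|\lambda_f(n)|^2.
$$

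The right-hand side is handled in two moves. First, Deligne's bound $|\lambda_f(n)|\le d(n)$ factors out the Hecke eigenvalue, giving an upper bound of $d(n)^2\sum_{k}\sum_{f\in H_k}|\mu_f(\phi)|^2$. Theorem~\ref{thmLuoSar} then controls the surviving second moment: for every $\e>0$ and all sufficiently large ${\rm K}$,
$$
\sum_{k\in 2\N\cap[{\rm K},2{\rm K})}\sum_{f\in H_k}|\mu_f(\phi)|^2\le (1+\e)\,\tfrac{\pi\|\phi\|^2}{2}\,L(\tfrac12,\phi)\,C(\phi)\,{\rm K}.
$$
For the left-hand side, Theorem~\ref{ASYMPT_average} yields $S_1({\rm K})=\tfrac{{\rm K}}{4\sqrt n}\,\PP_{-4n}(\phi)\bfB(\nu;-4n)+o({\rm K})$, so that for ${\rm K}$ large enough
$$
|S_1({\rm K})|^2\ge (1-\e)^2\,\tfrac{{\rm K}^2}{16\,n}\,|\PP_{-4n}(\phi)\bfB(\nu;-4n)|^2.
$$
The hypothesis $n\in Y(\phi)$ is precisely what guarantees this main term is nonzero, so that the inequality carries information.

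Assembling the three displays and solving for $N_{\phi,n}({\rm K})/{\rm K}$ gives
$$
\frac{N_{\phi,n}({\rm K})}{{\rm K}}\ge \frac{(1-\e)^2}{8\pi(1+\e)}\,\frac{|\PP_{-4n}(\phi)\bfB(\nu;-4n)|^2}{C(\phi)\,L(\tfrac12,\phi)\,\|\phi\|^2\,n\,d(n)^2},
$$
valid once ${\rm K}\ge{\rm K}(\phi,n,\e)$ is chosen so that both asymptotics have taken effect; here $C(\phi)>0$ by the Kim--Sarnak bound and $L(\tfrac12,\phi)>0$ by hypothesis, so the right-hand side is genuinely positive. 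I expect no serious obstacle beyond bookkeeping, since all the analytic content has been absorbed into the two input theorems. The single delicate point is the role of the signs in $S_1({\rm K})$: it is essential that the signed average of Theorem~\ref{ASYMPT_average}---rather than the unsigned first moment of Theorem~\ref{ASYMPT}---is used, because only the signed average suppresses the oscillatory second main term and leaves the diagonal $t=0$ contribution, thereby ensuring $|S_1({\rm K})|$ is of exact order ${\rm K}$ and not merely $o({\rm K})$.
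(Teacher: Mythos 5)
Your argument is correct and is essentially the paper's own proof: a Cauchy--Schwarz inequality over the set of $f$ with $\mu_f(\phi)\lambda_f(n)\neq 0$ (of cardinality $N_{\phi,n}({\rm K})$ by Watson's formula), with the signed first moment from Theorem~\ref{ASYMPT_average} below and Deligne's bound plus the Luo--Sarnak variance \eqref{LuoSar3} above, and the constants assemble to the stated bound. The only cosmetic point is that $L(1/2,\phi)>0$ is not literally a hypothesis but follows from $L(1/2,\phi)\neq 0$ together with the non-negativity of the central value (the paper cites \cite[Corollary 1]{KatokSarnak}), which is what lets \eqref{LuoSar3} be used as an upper bound.
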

\begin{proof} From Theorem \ref{ASYMPT_average}, for any $n\in Y(\phi)$ and $\e>0$, there exists ${\rm K}_1({\phi,n,\e}) \in \NN$ such that for any ${\rm K}\ge {\rm K}_1({\phi,n,\e})$,
\begin{align*}
& (1-\e)\left|\frac{1}{\sqrt{4n}}\PP_{-4n}(\phi)\bfB(\nu;-4n)\right| \le 
 \frac{2}{{\rm K}}\left| \sum_{{\rm K}\le k<2{\rm K}}\sum_{f \in H_k}{(-1)^{k/2}\mu_f(\phi)}\,\lambda_{f}(n) \right|.
\end{align*}
By the Cauchy-Schwarz inequality and by Deligne's bound $|\lambda_f(n)|\leq d(n)$, the quantity on the right-hand side is no greater than 
\begin{align*}
&\frac{2}{{\rm K}}\biggl\{\sum_{{\rm K}\le k< 2{\rm K}}\sum_{f\in H_k} {|\mu_f(\phi)|^2} \biggr\}^{1/2} \biggl\{\sum_{{\rm K}\le k < 2{\rm K}}\sum_{f \in H_k; \mu_{f}(\phi) \lambda_{f}(n) \neq 0} 1 \biggr\}^{1/2}d(n).
\end{align*}
Since $L(1/2,\phi)>0$ from the assumption in conjunction with \cite[Corollary 1]{KatokSarnak}, the asymptotic formula \eqref{LuoSar3} yields a constant ${\rm K}_2(\phi,\e)>0$ such that the inequality
\begin{align*}\sum_{{\rm K}\le k< 2{\rm K}}
\sum_{f\in H_k} {|\mu_f(\phi)|^2}\leq 
(1+\e)\frac{\pi {\rm K}}{2}C(\phi) L(1/2, \phi)\|\phi\|^2 
\end{align*}
holds for all ${\rm K}\geq {\rm K}_2(\phi,\e)$. Set ${\rm K}(\phi,n,\e)=\max\{{\rm K}_1(\phi,n,\e),{\rm K}_2(\phi,\e)\}$. Then for all ${\rm K}\geq {\rm K}(\phi,n,\e)$, we end up with the desired equality in the form  
\begin{align*}
&(1-\e)\left|\frac{1}{\sqrt{4n}}\PP_{-4n}(\phi)\bfB(\nu;-4n)\right| \leq \frac{2}{{\rm K}}\,d(n)
\biggl\{(1+\e)\frac{\pi {\rm K}}{2}C(\phi) L(1/2, \phi)\|\phi\|^2 \biggr\}^{1/2}\,N_{\phi,n}({\rm K})^{1/2}. 
\end{align*}
\end{proof}

To have a smooth access to a result of \cite{MartinWhitehouse}, we bridge the two different notations of automorphic $L$-functions. The standard $L$-function of any irreducible cuspidal representation $\pi$ of $\GL_n(\A_\QQ)$ ($n=2,3$) is denoted by $L(s,\pi)$ (completed by gamma factors); the Euler product without gamma factors is denoted by $L_{\fin}(s,\pi)$. For $D\in \Dcal$, $\varepsilon_{D}$ denotes the idele class character of $\Q^\times$ induced by $\chi_{D}$. Let $\pi_{\phi}$ be the cuspidal representation generated by $\tilde \phi$ (see the first paragraph of \S~\ref{Proof of Theorem MVT-L3-1}). Then $L_\fin(s,\pi_\phi)=L(s,\phi)$, $L_\fin(s,\pi_\phi\otimes \varepsilon_D)=L(s,\phi\otimes \chi_{D})$ and $L_\fin(s,{\Ad}(\pi_\phi))
=L(s,\phi,{\rm sym}^2)$.

\begin{lem} \label{WaldspurgerFormulaL}
 Let $\phi$ be an even Hecke-Maass cusp form on $\SL_2(\Z)$ with $\lambda_\phi(1)=1$, and let $D<0$ be a fundamental discriminant. Then, we have
$$\frac{|\PP_{D}(\phi)|^2}{\|\phi\|^2} = \frac{\sqrt{|D|}}{4}
\frac{L_{\fin}(1/2,\pi_\phi)\,L_{\fin}(1/2,\pi_\phi \otimes \varepsilon_{D})}{L_\fin(1, {\Ad}(\pi_{\phi}))}.
$$
\end{lem}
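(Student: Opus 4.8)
The plan is to pass from the classical toric period $\PP_D(\phi)$ to its adelic avatar and then invoke an explicit Waldspurger-type central value formula. Since $D<0$ is a fundamental discriminant we have $f=1$, $\Delta=D$, and $\delta(D<0)=1$; Proposition~\ref{PeriodL0} then gives $\Pcal_D(\tilde\phi)=2\,\PP_D(\phi)$, so that
$$
|\PP_D(\phi)|^2=\tfrac14\,|\Pcal_D(\tilde\phi)|^2.
$$
Here the adelic period $\Pcal_D(\tilde\phi)$ is the integral of $\tilde\phi$ against the trivial character over $\A_\Q^\times E^\times\bsl\A_E^\times$, carrying the normalizing factor $|D|^{1/2}$ together with the dyadic correction appearing in its definition in \S\ref{sect:PeriodInt}.

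Next I would apply the explicit form of the Waldspurger formula due to Martin--Whitehouse (\cite{MartinWhitehouse}) to the global toric integral with trivial character $\chi=1$. That formula expresses $|\Pcal_D(\tilde\phi)|^2/\langle\tilde\phi,\tilde\phi\rangle_{L^2}$ as the central value $L_\fin(1/2,\pi_\phi\times\pi_1)$ divided by $L_\fin(1,\Ad(\pi_\phi))$, times an explicit product of local orbital integrals that equals $1$ at almost all places. Because the inducing character is trivial, the automorphic induction $\pi_1$ from $E$ to $\Q$ is the isobaric sum $1\boxplus\varepsilon_D$, whence the Rankin--Selberg factorization
$$
L_\fin(s,\pi_\phi\times\pi_1)=L_\fin(s,\pi_\phi)\,L_\fin(s,\pi_\phi\otimes\varepsilon_D).
$$
This already produces the two $L$-factors in the numerator and the adjoint $L$-factor in the denominator of the asserted identity, and the relations $L_\fin(s,\pi_\phi)=L(s,\phi)$, $L_\fin(s,\pi_\phi\otimes\varepsilon_D)=L(s,\phi\otimes\chi_D)$ recorded above then render the statement in classical terms.

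It remains to evaluate the constant, i.e.\ the product of local factors together with the normalizations $|D|^{1/2}$, the dyadic term, and the factor $\tfrac14$ from the first step, and to identify the adelic norm $\langle\tilde\phi,\tilde\phi\rangle_{L^2}$ with the classical Petersson norm $\|\phi\|^2$ under the measure normalization of \cite{SugiyamaTsuzuki2017}, in the spirit of the identifications made in the proof of Proposition~\ref{ResolventTF}. At the finite primes the local integrals are computed against the local new vectors $W_p^{(\nu_p)}$ and, by the optimal embedding property established in \S\ref{sect:PeriodInt}, collapse to $1$ away from the controlled dyadic contribution; the measure normalizations of \S\ref{sect:PeriodInt}, in particular Lemma~\ref{PeriodL3}, must be fed in so that the finite part carries no extraneous constant. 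The main obstacle is the archimedean computation: one must evaluate the local toric integral of the archimedean new vector over the compact torus $E_\infty^\times/\R^\times$, using $W_\infty^{(\nu_\infty)}(\diag(y,1))=2|y|^{1/2}K_{\nu_\infty/2}(2\pi|y|)$, and match the resulting ratio of Gamma factors against the archimedean standard and adjoint $L$-factors, so that after all cancellations precisely the clean constant $\sqrt{|D|}/4$ survives. Tracking these constants --- rather than any conceptual difficulty --- is where the care is needed.
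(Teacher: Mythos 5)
Your plan follows the paper's proof essentially step for step: Proposition~\ref{PeriodL0} converts $\PP_D(\phi)$ into the adelic toric period, Martin--Whitehouse's explicit Waldspurger formula with trivial character supplies the central value with the base-change $L$-function factoring as $L(s,\pi_\phi)L(s,\pi_\phi\otimes\varepsilon_D)$, and the measure comparison of Lemma~\ref{PeriodL3} together with $(\tilde\phi,\tilde\phi)=\|\phi\|^2$ pins down the constant. The one place where you make the work harder than it needs to be is at infinity: the paper never evaluates an archimedean toric integral of the Whittaker function, because the $\Psi_D^{-1}(\SO(2))$-invariance of $R(\gamma_{D,\infty})\tilde\phi$ puts one in the test-vector situation where Martin--Whitehouse's constant $C_\infty(E,\pi_\phi,{\bf 1})$ is already evaluated to be $1$, so the only archimedean input is the elementary gamma-factor ratio $L_\infty(1/2,\pi_\phi)L_\infty(1/2,\pi_\phi\otimes\varepsilon_D)/L_\infty(1,{\Ad}(\pi_\phi))=2$, which combined with the factor $1/(2\sqrt{|D|})$ from their formula and the relation $\int\tilde\phi(\Psi_D(\tau)\gamma_{D,\infty})\,d^*\dot\tau=2|D|^{-1/2}\PP_D(\phi)$ produces exactly the constant $\sqrt{|D|}/4$.
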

\begin{proof} Let $\tilde \phi$ be the function on $\GL_2(\A_\Q)$ defined in \S~2. We apply \cite[Theorem 4.1]{MartinWhitehouse} taking $F=\Q$, $E=\Q(\sqrt{D})$, $\Omega={\bf 1}$ and $\pi=\pi_{\phi}$. Let $\d \tau$ be an additive Haar measure on $\A_{E}$ which is self-dual with respect to the self-duality defined by the character $\psi_{E}=\psi\circ {\rm tr}_{E/\Q}$. Then $\d\tau=\otimes_{v}\d\tau_v$ with $\d \tau_v$ the self-dual Haar measure on $E_v=E\otimes_{\Q}\Q_v$ with respect to $\psi_{E_v}=\psi_{E}|E_v$. Then $\vol(\cO_{E_p},\d\tau_p)=p^{-\ord_{p}D/2}$ for all $p<\infty$ and $\cO_{E_p}$ being the maximal compact subring of $E_p$. Recall the optimal embedding $\Psi_{D}:E\hookrightarrow \Mat_{2}(\Q)$ of level $1$ defined in \S~\ref{sect:PeriodInt}; then the order $R(\pi_p)$ in \cite[\S 4.1]{MartinWhitehouse} coincides with $\Mat_{2}(\Z_p)$.  When $D\equiv 1 \pmod{4}$, the formula \eqref{PsiKinfty} shows that $R(\gamma_{D,\infty})\tilde \phi$ (the right translation of $\tilde \phi$ by $\gamma_{D,\infty}$) is right $\Psi_{D}^{-1}(\SO(2))$-invariant, and this $\Psi_{D}^{-1}(\SO(2))$-invariance holds also in the case $D\equiv 0 \pmod{4}$ similarly. Thus the vector $R(\gamma_{D,\infty})\tilde \phi$ satisfies the required local conditions for \cite[Theorem 4.1]{MartinWhitehouse} to be applied. Hence
\begin{align}
\left|\int_{\A_\Q^{\times} E^\times \bsl \A_E^\times}\tilde \phi (\Psi_{D}(\tau)\gamma_{D,\infty})\,\d^*\dot \tau\right|^2\,(\tilde \phi,\tilde \phi )^{-1}=\frac{L(1/2,\pi_{\phi})\,L(1/2,\pi_{\phi}\otimes \varepsilon_{D})}{L(1,{\Ad}(\pi_{\phi}))}\,\frac{1}{2\sqrt{|D|}}\,C_{\infty}(E,\pi_{\phi},{\bf 1}),
 \label{WaldspurgerFormulaL-1}
\end{align}
where $\d^* \dot \tau$ is the quotient measure on $\A_\Q^\times E^\times \bsl \A_E^\times$ induced from the Haar measure $\d^*\tau=\otimes_{v}\zeta_{E_v}(1)\frac{\d\tau_v}{|\nr_{E/\Q}(\tau_v)|_{v}}$ on $\A_E^\times$ and the Haar measure $\d^\times t=\otimes_{v}\zeta_v(1)\frac{\d \lambda_v(t_{v})}{|t_v|_{v}}$ on $\A_\Q^\times$, $(\,,\,)$ is the $L^2$-inner product on $L^2(Z(\A_\Q) \GL_2(\Q)\bsl \GL_2(\A_\Q))$ with $Z$ being the center of $\GL_2$ and $C_{\infty}(E,\pi_{\phi},{\bf 1})$ is the constant evaluated to be $1$ in \cite[p.175, line 7--12]{MartinWhitehouse}. We remark that the Haar measure $d^*\tau$ on $\A_{E}^\times$
is 
the same as
our $\d^\times \tau=\otimes_{v} \zeta_{E_v}(1) \frac{\d\lambda_{v}(x)\d\lambda_v(y_v)}{|x_v^2-4^{-1}D y_v^2|_v}$ (see \S~\ref{sect:PeriodInt}).
Indeed, by $\vol(\cO_{E,p}^\times; \d^* \tau_p)=p^{-\ord_p(D)/2}$ for $p<\infty$, from Lemma~\ref{PeriodL3}, we have $\d^\times \tau_\fin=|D|^{1/2}\,\d^* \tau_{\fin}$ for the finite component. By a change of variables, it is immediately seen that $\d^\times \tau_\infty=|D|^{-1/2}\d^* \tau_\infty$ for the archimedean component. Thus $\d^\times \tau = \d^* \tau$. Having this adjustment of Haar measures, from the proof of Proposition~\ref{PeriodL0} (especially, the argument after \eqref{PsiKinfty}), we see 
\begin{align*}
\int_{\A_\Q^\times E^\times \bsl \A_E^\times}
\tilde \phi (\Psi_{D}(\tau)\gamma_{D,\infty})\,\d^*\dot \tau
&= \int_{\A_\Q^\times E^\times \bsl \A_E^\times}
\tilde \phi (\Psi_{D}(\tau)\gamma_{D,\infty})\,\d^\times \dot \tau
\\
&= |D|^{-1/2}\Pcal_D(\tilde\phi) = 2\,|D|^{-1/2}\PP_{D}(\phi). 
\end{align*} 
By comparing the normalization of Haar measures on $\GL_2(\A_\Q)$, it is comfirmed that $(\tilde \phi,\tilde\phi)=\|\phi\|^2$. Since $D<0$, the gamma factor of $L(s,\pi_\phi\otimes \varepsilon_{D})$ is $\Gamma_{\R}(s+\nu_\infty/2+1)\Gamma_{\R}(s-\nu_\infty/2+1)$. Hence on the right-hand side of \eqref{WaldspurgerFormulaL-1}, the archimedean $L$-factor are computed as
\begin{align*}
\frac{L_\infty(1/2, \pi_\phi)L_\infty(1/2,\pi_\phi \otimes \varepsilon_{D})}{L_\infty(1,{\Ad}(\pi_\phi))}
= \frac{\Gamma_\RR(\frac{1+\nu_\infty}{2})\Gamma_\RR(\frac{1-\nu_\infty}{2})\Gamma_\RR(\frac{3+\nu_\infty}{2})\Gamma_\RR(\frac{3-\nu_\infty}{2})}{\Gamma_\RR(1+\nu_\infty)\Gamma_\RR(1-\nu_\infty)\Gamma_\RR(1)} = 2.
\end{align*}
Thus we obtain the required formula from \eqref{WaldspurgerFormulaL-1}.
\end{proof}


\noindent
{\it Proof of Theorem~\ref{QuaNonVan}} :
From the formula \eqref{bfB-defformula}, we have ${\bf B}(\nu;-4n)=1$ for any $n\in X(\phi)$. By Lemma~\ref{WaldspurgerFormulaL}, we have $X(\phi)\subset Y(\phi)$.  Thus, Theorem~\ref{QuaNonVan} follows from Proposition~\ref{quantitative version} and Lemma~\ref{WaldspurgerFormulaL} immediately.

\section{Appendix: explicit formulas of dyadic elliptic orbital integrals}
\label{Appendix}


The aim of the appendix is to complete \cite[Theorem 7.9 (5)]{SugiyamaTsuzuki2018} by proving an explicit formula of the integral
$$\fE^{(z)}(\hat\gamma)=\int_{{\mathfrak T}_\tau \bsl \GL_2(\QQ_2)}\Phi(s; g^{-1}\hat\gamma g)\varphi^{\tau}(g)dg,
$$
the definition of whose ingredients is recalled in what follows. Let $|\cdot|$ be the $2$-adic valuation of $\QQ_2$ such that $|2|=2^{-1}$. Let $t,n$ be elements of $\QQ_2$ with $t^2-4n\not=0$, which are fixed throughout the appendix. Then, there exist $m \in \QQ_2^\times$ and $\tau \in \{1\}\cup2\ZZ_2^\times\cup\{\ZZ_2^\times-(\ZZ_2^\times)^2\}$
such that $t^2-4n=(2m)^2\tau$.
Set $E=\QQ_2[X]/(X^2-\tau)$.
If $\tau \neq 1$, then $E/\QQ_2$ is a quadratic extension and $E$ is embedded into ${\rm M}_2(\QQ_2)$ by
$\iota_\tau(a+b\sqrt{\tau})=[\begin{smallmatrix}
a & b \\ b\tau & a
\end{smallmatrix}]$ for any $a,b \in \QQ_2$.
Remark that all the quadratic extensions of $\QQ_2$ are exhausted by $\QQ_2[\sqrt{\tau}]$
with $\tau \in \{-1, \pm 2, \pm 5, \pm 10\}$.
If $\tau=1$, then $E$ is isomorphic to $\QQ_2\times \QQ_2$ and embedded into ${\rm M}_2(\QQ_2)$ by
$\iota_\tau(a+b\sqrt{\tau})=[\begin{smallmatrix}
a+b & 0 \\ 0 & a-b
\end{smallmatrix}]$ for any $a,b \in \QQ_2$. The subgroup $\mathfrak{T}_{\tau}:=\iota_\tau(E^\times)$ is an elliptic torus of $\GL_2(\Q_2)$ unless $\tau=1$, in which case it is a split torus. Associated with $(t,n)$, an element $\hat \gamma \in \mathfrak{T}_\tau$ is defined as
$\hat\gamma=[\begin{smallmatrix}
a & 1 \\ \tau & a
\end{smallmatrix}]$ if $\tau\neq1$, and
$\hat\gamma=[\begin{smallmatrix}
a+1 & 0 \\ 0 & a-1
\end{smallmatrix}]$ if $\tau=1$, where $a=\frac{t}{2m}$.

Set $G=\GL_2(\QQ_2)$.
Let $\Phi(s) : G\rightarrow \CC$ be the $2$-adic Green function defined by
$$\Phi(s;g)=(2^{-\frac{s+1}{2}}-2^{\frac{s+1}{2}})^{-1}\{|\det g|^{-1}\max_{1\le i,j \le 2}(|g_{ij}|)^{2} \}^{-\frac{s+1}{2}}, \qquad g=(g_{ij}) \in G$$
for $\Re s>1$ (cf.\ \cite[\S2.3]{SugiyamaTsuzuki2018}). We fix a Haar measure $dg$ on $G$ such that $\vol(\GL_2(\ZZ_2),dg)=1$.
We put $d^\times x=\zeta_2(1)|x|^{-1}d\l_2(x)$ on $\QQ_2^\times$ and define a Haar measure on $E^\times$ as
$$d^\times u=\zeta_{E}(1)|x^2-m^2\tau y^2|^{-1}dxdy \text{ with } u=x+\sqrt{m^2\tau}\,y.
$$
By ${\frak T}_\tau\cong E^\times$, we regard $d^\times u$ as a Haar measure on $\mathfrak{T}_\tau$ and endow the space $\mathfrak{T}_\tau \bsl G$ with the quotient measure $dg/d^\times u$.

Recall that there exists a unique function $\varphi^\tau:G\rightarrow \C$ which satisfies the conditions (i) $\varphi^\tau*{\mathbb T}_2=(2^{(1+z)/2}+2^{(1-z)/2})\varphi^\tau$, (ii) $\varphi^\tau(tg)=\varphi^\tau(g)\,(t\in {\frak T}_\tau,\,g\in G)$, and (iii) $\varphi^\tau(1_2)=1$
(cf.\ \cite[Lemma 7.5]{SugiyamaTsuzuki2018}), where ${\mathbb T}_2$ is the characteristic function of the set $\{g\in {\rm M}_2(\ZZ_2) \mid |\det g|=2^{-1}\}$. If $\tau=1$, the formula of $\varphi^\tau$ is given in \cite[Lemma 6.1]{SugiyamaTsuzuki2018}. If $\tau\not=1$, the function $\varphi^\tau$ is constructed as follows. Let $f$ be a spherical vector in ${\rm Ind}_{B(\QQ_2)}^{G}(|\cdot|^{z/2}\boxtimes|\cdot|^{-z/2})$ determined by $f(1_2)=1$ and we define $\varphi_0: G\rightarrow \CC$ by
$$\varphi_{0}(g)=\int_{Z\bsl \mathfrak{T}_\Delta}f(tg)dt, \qquad g\in G,$$
where $Z$ is the center of $G$. We need the value $\varphi_0(1_2)$ which is computed in	\cite[Lemma 7.4]{SugiyamaTsuzuki2018} as
\begin{align}\label{phi(1)}\varphi_0(1_2)=\begin{cases}
|m|^{-1}(1+2^{\frac{z+1}{2}}) & (\tau \in \{\pm2, \pm10\}), \\
|m|^{-1}(1+2^{-\frac{z+1}{2}}) & (\tau \in \{-1,-5 \}), \\
|m|^{-1}3^{-1}2(1+2^{-z}) & (\tau =5).
\end{cases}
\end{align}
Then $\varphi^\tau(g)=\varphi_0(g)/\varphi_0(1_2)$.

For $s,z \in \CC$ such that $\Re s> (|\Re z|-1)/2$ and $b \in \QQ_2^\times$, set
$$\Scal_2^{\tau, (z)}(s;b)=-2^{-\frac{s+1}{2}}\frac{\zeta_2(s+\frac{z+1}{2})\zeta_2(s+\frac{-z+1}{2})}{L(s+1, \varepsilon_\tau)}|b|^{\frac{s+1}{2}}, \qquad (|b|\le 1),$$
$$\Scal_2^{\tau, (z)}(s;b) = -2^{-\frac{s+1}{2}}\left\{ \frac{\zeta_2(-z)\zeta_2(s+\frac{z+1}{2})}{L(\frac{-z+1}{2},\varepsilon_\tau)}|b|^{\frac{-z+1}{4}} + \frac{\zeta_2(-z)\zeta_2(s+\frac{z+1}{2})}{L(\frac{-z+1}{2}, \varepsilon_\tau)} |b|^{\frac{z+1}{4}}\right\}, \qquad (|b|>1)$$
as in \eqref{Scaldeltazsa-f1}, where $\varepsilon_\tau$ is the real-valued character of $\QQ_2^\times$ corresponding to $\QQ_2(\sqrt{\tau})/\QQ_2$ by local class field theory. Here is the main result of this section:
\begin{thm}\label{explicit dyadic} Suppose $\Re s>(|\Re z|-1)/2$. We have 
	$$\fE^{(z)}(\hat{\gamma})=\begin{cases}|2m| \Scal_2^{1,(z)}(s;\tfrac{n}{4m^2}) & (\tau =1),\\
	|m|\Scal_2^{\tau, (z)}(s;\tfrac{n}{m^2}) & (\tau \in \{\pm2, \pm10, -1, -5\}),\\
	|2m| 2^{\frac{-z-1}{2}}3(1+2^{-z})^{-1}\Scal_2^{\tau,(z)}(s;\tfrac{n}{4m^2}) & (\tau=5).
	\end{cases}$$
\end{thm}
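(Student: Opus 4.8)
The plan is to unfold the torus average defining $\varphi^\tau$, reduce $\fE^{(z)}(\hat\gamma)$ to an explicit $2$-adic double integral, and evaluate it by a valuation analysis that is case-sensitive in the arithmetic of $\tau$. For $\tau\neq1$ the quotient $Z\bsl\mathfrak{T}_\tau$ is compact (as $E^\times/\QQ_2^\times$ is), so the defining integral $\varphi_0(g)=\int_{Z\bsl\mathfrak{T}_\tau}f(tg)\,dt$ converges and $\varphi^\tau=\varphi_0/\varphi_0(1_2)$. Since $\hat\gamma\in\mathfrak{T}_\tau$ commutes with every $t\in\mathfrak{T}_\tau$, the function $g\mapsto\Phi(s;g^{-1}\hat\gamma g)$ is left $\mathfrak{T}_\tau$-invariant, and combining the integral over $\mathfrak{T}_\tau\bsl G$ with the torus average collapses the two integrations into one over $Z\bsl G$:
$$
\fE^{(z)}(\hat\gamma)=\varphi_0(1_2)^{-1}\int_{Z\bsl G}\Phi(s;g^{-1}\hat\gamma g)\,f(g)\,dg .
$$
For the split case $\tau=1$ the torus average diverges, so there I would instead insert the explicit formula for $\varphi^\tau$ from \cite[Lemma 6.1]{SugiyamaTsuzuki2017} and proceed directly; the constants $\varphi_0(1_2)$ needed in the remaining cases are read off from \eqref{phi(1)}.

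Next I would pass to Iwasawa coordinates $g=\left[\begin{smallmatrix}1&x\\0&1\end{smallmatrix}\right]\left[\begin{smallmatrix}\alpha&0\\0&1\end{smallmatrix}\right]k$ with $x\in\QQ_2$, $\alpha\in\QQ_2^\times$, $k\in\GL_2(\ZZ_2)$. Two invariances make this effective: $f$ is right $\GL_2(\ZZ_2)$-invariant with $f(g)=|\alpha|^{(z+1)/2}$, while $\Phi(s;\cdot)$ is $Z$-invariant and invariant under conjugation by $\GL_2(\ZZ_2)$ (such conjugation preserves both $\det$ and $\max_{ij}|(\cdot)_{ij}|$). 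Hence the $k$-integration is trivial, and with $a=t/(2m)$ the conjugate is
$$
\left[\begin{smallmatrix}\alpha^{-1}&0\\0&1\end{smallmatrix}\right]\left[\begin{smallmatrix}1&-x\\0&1\end{smallmatrix}\right]\hat\gamma\left[\begin{smallmatrix}1&x\\0&1\end{smallmatrix}\right]\left[\begin{smallmatrix}\alpha&0\\0&1\end{smallmatrix}\right]=\begin{pmatrix}a-\tau x & \alpha^{-1}(1-\tau x^{2})\\[2pt] \alpha\tau & a+\tau x\end{pmatrix}\qquad(\tau\neq1),
$$
whose determinant is $a^{2}-\tau=n/m^{2}$. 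Writing $M(x,\alpha)=\max\{|a-\tau x|,\,|a+\tau x|,\,|\alpha|^{-1}|1-\tau x^{2}|,\,|\alpha||\tau|\}$ and inserting the defining formula for $\Phi$, I obtain
$$
\fE^{(z)}(\hat\gamma)=\varphi_0(1_2)^{-1}\,(2^{-\frac{s+1}{2}}-2^{\frac{s+1}{2}})^{-1}\,|n/m^{2}|^{\frac{s+1}{2}}\int_{\QQ_2^\times}\!\int_{\QQ_2}M(x,\alpha)^{-(s+1)}\,|\alpha|^{\frac{z-1}{2}}\,dx\,d^\times\alpha,
$$
the power of $|\alpha|$ being fixed by $\vol(\GL_2(\ZZ_2))=1$. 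The analogous (now upper–triangular) computation for $\tau=1$, using the explicit $\varphi^\tau$, produces an integral of the same shape.

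Then I would evaluate the double integral by partitioning $\QQ_2\times\QQ_2^\times$ according to the $2$-adic valuations of $x$ and $\alpha$, on each piece deciding which of the entries realizes $M$, and summing the resulting geometric series in $2^{-s}$. The outcome should organize itself into the quotient $\zeta_2(s+\tfrac{z+1}{2})\,\zeta_2(s+\tfrac{-z+1}{2})/L(s+1,\varepsilon_\tau)$ appearing in \eqref{Scaldeltazsa-f1}, so that the integral equals $\Scal_2^{\tau,(z)}(s;\cdot)$ up to an explicit constant, which after multiplication by $\varphi_0(1_2)^{-1}$ yields the three prefactors $|2m|$, $|m|$, and $|2m|\,2^{\frac{-z-1}{2}}3(1+2^{-z})^{-1}$. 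The three arithmetic types of $\tau$ must be separated: for $\tau=5$ the extension $\QQ_2(\sqrt{\tau})$ is unramified, so $\varepsilon_\tau$ is unramified and $L(s+1,\varepsilon_\tau)=(1+2^{-(s+1)})^{-1}$ survives, whereas for $\tau\in\{\pm2,\pm10,-1,-5\}$ it is ramified, $\varepsilon_\tau$ has nontrivial conductor, and $L(s+1,\varepsilon_\tau)=1$; the factor-of-$4$ discrepancy between the arguments $n/m^2$ (ramified) and $n/(4m^2)$ (split and unramified) is a genuine dyadic normalization effect tracked through the sum.

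The main obstacle is precisely the evaluation at the dyadic prime. Unlike the tame computation underlying \cite[Theorem 7.9]{SugiyamaTsuzuki2017}, the ultrametric identity $\max(|a-\tau x|,|a+\tau x|)=\max(|a|,|\tau x|)$ can fail at $p=2$: when $|a|=|\tau x|$ both $a\pm\tau x$ may acquire strictly larger valuation, so the region where the two diagonal entries of the conjugate simultaneously drop must be isolated and weighted with the correct measure. This refined boundary analysis, together with the ramification-dependent volume bookkeeping—the index-$3$ phenomenon for $\tau=5$ already visible in Lemma~\ref{PeriodL1} and feeding into $\varphi_0(1_2)$—is what distinguishes the cases and produces the differing normalizing constants. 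Verifying that the several geometric series recombine into the single closed form $\Scal_2^{\tau,(z)}$ of Theorem~\ref{explicit dyadic} is the delicate final step.
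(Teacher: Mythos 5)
Your proposal follows essentially the same route as the paper: the authors also collapse the torus average and the orbital integral into a two-variable dyadic integral (their $B(t)=\int_{\QQ_2}\max(|1-x|,|t^{-1}(\tau b^2-x^2)|,|t|,|1+x|)^{-s-1}dx$ integrated against $|t|^{(z-1)/2}d^\times t$, which is your $M(x,\alpha)$ integral after the substitution $b=a^{-1}$), evaluate it by partitioning according to valuations with the dyadic failure of $\max(|1-x|,|1+x|)=\max(1,|x|)$ controlled by exactly the Hensel-lemma facts you flag (their Lemma~\ref{dyadic hensel}), and then divide by $\varphi_0(1_2)$ from \eqref{phi(1)} to produce the three prefactors. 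The only cosmetic difference is that they import the reduction to the double integral from \cite[\S10.2.4]{SugiyamaTsuzuki2017} rather than rederiving it via the Iwasawa decomposition as you do.
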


\subsection{Proof of Theorem \ref{explicit dyadic}}
For proving Theorem \ref{explicit dyadic},
we must notice the following deduced from Hensel's lemma for $\QQ_2$.
\begin{lem}	\label{dyadic hensel}
	If $\tau \in \{\pm2, \pm10\}$, $|\tau-u^2| = 1$ for all $u \in \ZZ_2^\times$.
	
	If $\tau\in \{-1,-5\}$, $|\tau-u^2| = |2|=2^{-1}$ for all $u \in \ZZ_2^\times$.
	
	If $\tau =5$, $|\tau-u^2| = |4|=2^{-2}$ for all $u \in \ZZ_2^\times$.
\end{lem}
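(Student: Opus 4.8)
The plan is to reduce the entire statement to the single elementary fact that the square of a $2$-adic unit is congruent to $1$ modulo $8$. First I would establish this congruence: for $u \in \ZZ_2^\times$ write $u = 1 + 2a$ with $a \in \ZZ_2$, so that $u^2 = 1 + 4a(a+1)$; since exactly one of $a$ and $a+1$ lies in $2\ZZ_2$, the product $a(a+1)$ lies in $2\ZZ_2$ and hence $u^2 \in 1 + 8\ZZ_2$. The key point is that this holds \emph{uniformly} for every unit $u$, which is precisely what the lemma needs, as it asserts a single value of $|\tau - u^2|$ valid for all $u \in \ZZ_2^\times$.

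With $u^2 \equiv 1 \pmod 8$ in hand, I would dispose of the three cases by reducing $\tau - u^2 \equiv \tau - 1 \pmod 8$ and reading off the $2$-adic valuation. For $\tau \in \{\pm 2, \pm 10\}$ the integer $\tau$ is even while $u^2$ is odd, so $\tau - u^2$ is a $2$-adic unit and $|\tau - u^2| = 1$; here the refinement modulo $8$ is not even required. For $\tau \in \{-1,-5\}$ one finds $\tau - 1 \equiv -2$ and $\tau - 1 \equiv -6 \pmod 8$ respectively, both of which are $\equiv 2 \pmod 4$, so that $\ord_2(\tau - u^2) = 1$ and $|\tau - u^2| = 2^{-1}$. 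For $\tau = 5$ one finds $\tau - 1 \equiv 4 \pmod 8$, whence $\ord_2(\tau - u^2) = 2$ and $|\tau - u^2| = 2^{-2}$.

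The only subtlety, and it is minor, is to observe that the residue of $\tau - u^2$ modulo $8$ pins down its valuation \emph{exactly}, rather than merely bounding it below: a residue $\equiv 4 \pmod 8$ forces valuation exactly $2$, and a residue $\equiv 2 \pmod 4$ (i.e. $2$ or $6$ modulo $8$) forces valuation exactly $1$. Thus there is no genuine obstacle; the lemma is a direct computation once the congruence $u^2 \equiv 1 \pmod 8$ is recorded. This congruence is of course the local shadow of the classical description $(\ZZ_2^\times)^2 = 1 + 8\ZZ_2$, for which Hensel's lemma supplies the reverse inclusion; but only the easy one-sided congruence is used here.
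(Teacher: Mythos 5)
Your proof is correct. The paper offers no argument for this lemma at all --- it merely asserts that the statement is ``deduced from Hensel's lemma for $\QQ_2$'' --- so there is nothing to compare against except that attribution. Your observation that only the elementary congruence $u^2\equiv 1\pmod 8$ for $u\in\ZZ_2^\times$ is needed (and not the converse inclusion $1+8\ZZ_2\subset(\ZZ_2^\times)^2$, which is where Hensel's lemma would actually enter) is accurate, and your case-by-case reading of the valuation from the residue of $\tau-1$ modulo $8$ is exactly right.
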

\subsubsection{Split case}
The case $\tau=1$ is computed in the same way as \cite[Theorem 7.9]{SugiyamaTsuzuki2018}, and hence we have
\begin{align*}
\fE^{(z)}(\hat\gamma) = |2m| \fF_2^{(z)}(s;\tfrac{a+1}{a-1}),
\end{align*}
where $\fF_2^{(z)}(s)$ is the function computed in \cite[\S9.1.4]{SugiyamaTsuzuki2018} and $a=\frac{t}{2m} \in \QQ_2-\{1\}$. Set $b=\tfrac{a+1}{a-1}$.
Then, we obtain Theorem \ref{explicit dyadic} for $\tau=1$ by noting
$$\tfrac{b}{(b-1)^2}=\tfrac{a+1}{a-1}(\tfrac{a+1}{a-1}-1)^{-2}=\tfrac{a^2-1}{4}=\tfrac{n}{4m^2}.$$

\subsection{Ramified case}
Set $a=\frac{t}{2m}$ and suppose $a\neq 0$. Put $b=a^{-1}$.
We start from the expression
\begin{align}\label{expression of integral}
	&\varphi_{0}(1_2)\fE^{(z)}(\hat\gamma)=\int_{\mathfrak{T}_{\tau}\bsl G}\Phi(s;g^{-1}\widehat{\gamma} g)
	\varphi_{0}(g)dg \notag \\
	= & |1-b^2\tau|^{\frac{s+1}{2}}|\tau b|^{-\frac{z+1}{2}} (2^{-\frac{s+1}{2}}-2^{\frac{s+1}{2}})^{-1}\int_{\QQ_2^\times}B(t)|t|^{\frac{z-1}{2}}d^\times t,
\end{align}
where
$ B(t)=\int_{\QQ_2}\max(|1-x|,  |t^{-1}(\tau b^{2}-x^2)|, | t|, |1+ x|)^{-s-1}dx$ (cf.\ \cite[\S9.2.4]{SugiyamaTsuzuki2018}).

First let us consider the case where $\QQ_2(\sqrt{\tau})/\QQ_2$ is a ramified quadratic extension.
If $\tau \in 2\ZZ_2^\times$, the same argument as in \cite[\S9.2.4]{SugiyamaTsuzuki2018} works without modification, and we have Theorem \ref{explicit dyadic} for $\tau\in\{\pm2,\pm10\}$.
In what follows, we consider the case $\tau \in \{-1,-5\}$, which is computed similarly to \cite[Lemmas 9.10 and 9.11]{SugiyamaTsuzuki2018}. We must modify arguments using Hensel's lemma in \cite[Lemmas 9.10 and 9.11]{SugiyamaTsuzuki2018} with the aid of Lemma \ref{dyadic hensel}.
Set $[2^{-l/2}]=2^{\lfloor -l/2\rfloor}$ for $l \in \ZZ$.
\begin{lem}\label{ramified1}
	Suppose $|a|>1$ and $\Re s>-1/2$.
	
	When $|t|< |b^2|$,
		$B(t) =|t|^{s+1}|b|^{-2s-1}(2^{-1}+2^s-2^{-s-1})(1-2^{-2s-1})^{-1}.$
	
	When $|b^2|\le|t|\le 1$,
		$B(t)=2^{-1}|t|^{s+1}[|t|^{1/2}]^{-2s-1}2^{-2s-1}(1-2^{-2s-1})^{-1}+[|t|^{1/2}].$

	When $|t|>1$, $B(t)=|t|^{-s}(1-2^{-2s-2})(1-2^{-2s-1})^{-1}$.
\end{lem}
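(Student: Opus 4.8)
The plan is to evaluate $B(t)$ directly as a $2$-adic integral, following the pattern of the proofs of \cite[Lemmas 10.10 and 10.11]{SugiyamaTsuzuki2017} but feeding in the dyadic Hensel input of Lemma~\ref{dyadic hensel} wherever the valuation of $\tau b^2-x^2$ must be pinned down. Since $|a|>1$ we have $|b|=2^{-\beta}$ with $\beta\ge 1$; write also $|t|=2^{-l}$. First I would observe that the integrand $\max(|1-x|,|t|^{-1}|\tau b^2-x^2|,|t|,|1+x|)^{-s-1}$ is constant on each sphere $\{|x|=2^{k}\}$ once the dominant term of the maximum is identified, so it suffices to decompose $\QQ_2=\bigsqcup_{k\in\ZZ}\{|x|=2^{k}\}$ (each sphere having measure $2^{k-1}$) and, on each sphere, single out the largest of the four quantities by the ultrametric inequality. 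The behaviour of $|1\pm x|$ is elementary: $|1\pm x|=|x|$ when $|x|>1$, $\max(|1-x|,|1+x|)=2^{-1}$ when $x\in\ZZ_2^\times$, and $|1\pm x|=1$ when $|x|<1$.

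The crux is the behaviour of $|\tau b^2-x^2|$, which I would split according to the position of $|x|$ relative to $|b|$. For $|x|<|b|$ the ultrametric inequality gives $|\tau b^2-x^2|=|b|^2$, while for $|b|<|x|\le1$ and for $|x|>1$ it gives $|\tau b^2-x^2|=|x|^2$. The delicate sphere is $|x|=|b|$: writing $x=bu$ with $u\in\ZZ_2^\times$ gives $\tau b^2-x^2=b^2(\tau-u^2)$, and here I invoke Lemma~\ref{dyadic hensel}, which for $\tau\in\{-1,-5\}$ yields $|\tau-u^2|=2^{-1}$ for every unit $u$, so that $|\tau b^2-x^2|=2^{-2\beta-1}$ on this sphere. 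This is exactly the point at which the proofs of \cite[Lemmas 10.10 and 10.11]{SugiyamaTsuzuki2017} must be adapted: the Hensel argument for odd $p$ used there is replaced by the dyadic computation of Lemma~\ref{dyadic hensel}, which fixes the exact power of $2$ and thereby determines which term of the maximum wins near $|x|=|b|$.

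With the dominant term identified on every sphere, the evaluation reduces to summing geometric series. On the annuli where $|t|^{-1}|x|^2$ dominates the maximum equals $2^{l+2k}$, and the sum over $k$ is a geometric series of ratio $2^{-(2s+1)}$, producing the ubiquitous factor $(1-2^{-2s-1})^{-1}$; the contribution of the single sphere $|x|=|b|$ is added separately using the Hensel value $2^{-2\beta-1}$; and on the ball where the maximum is the constant $1$ (resp.\ $|t|$) one simply multiplies by its total measure. The three ranges of $|t|$ in the statement correspond to the position of $|t|$ relative to $|b|^2$ and to $1$: when $|t|<|b|^2$ the term $|t|^{-1}|\tau b^2-x^2|$ dominates everywhere, and the three contributions assemble, after factoring out $|t|^{s+1}|b|^{-2s-1}$, into $(2^{-1}+2^s-2^{-s-1})(1-2^{-2s-1})^{-1}$; when $|t|>1$ the constant term $|t|$ dominates on the whole ball $|x|\le|t|$, giving the clean value $|t|^{-s}(1-2^{-2s-2})(1-2^{-2s-1})^{-1}$; and the intermediate range produces the additive ball-measure term $[|t|^{1/2}]$ together with the geometric sum cut off at $|x|=|t|^{1/2}$.

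The main obstacle I anticipate is the careful bookkeeping at the two transition spheres $|x|=|b|$ and $|x|=|t|^{1/2}$: the latter exists as an honest sphere only when $\ord_2 t$ is even, and this parity dependence is precisely what is encoded by the floor quantity $[|t|^{1/2}]=2^{\lfloor-l/2\rfloor}$ appearing in the middle range. Assigning the boundary spheres to the correct side of the maximum and tracking the resulting constants (such as $2^{-1}+2^s-2^{-s-1}$) is where all the arithmetic care is needed, whereas the geometric summation itself is routine; the convergence throughout is guaranteed by the hypothesis $\Re s>-1/2$, which is what makes the ratio $2^{-(2s+1)}$ have modulus less than $1$.
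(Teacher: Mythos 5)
Your proposal is correct and follows essentially the same route as the paper: identify the dominant term of the maximum region by region via the ultrametric inequality, use Lemma~\ref{dyadic hensel} to pin down $|\tau b^2-x^2|=2^{-1}|b|^2$ on the critical sphere $|x|=|b|$ (the only place the odd-residue-characteristic argument of \cite[Lemmas 10.10, 10.11]{SugiyamaTsuzuki2017} needs modification), and sum the resulting geometric series; your assembled constants $(2^{-1}+2^s-2^{-s-1})$, the cutoff at $[|t|^{1/2}]$, and the factor $(1-2^{-2s-2})(1-2^{-2s-1})^{-1}$ all check out. The only cosmetic difference is that the paper organizes the computation through the split $D_1=\{|x|\le\max(|b|,|t|)\}$, $D_2=\QQ_2-D_1$ and quotes the unchanged $B_2$-part from the earlier paper, whereas you carry out the full sphere-by-sphere decomposition directly.
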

\begin{proof}Let $f(x)$ be the integrand of $B(t)$, and set $D_1=\{x \in \QQ_2 \mid |x|\le \max(|b|,|t|) \}$ and $D_2=\QQ_2-D_1$.
	We divide $B(t)$ into the sum $B_1(t)+B_2(t)$, where $B_j(t)=\int_{D_j}f(x)dx$.

	(i) When $|t|\le |b|$, by noting Lemma \ref{dyadic hensel}, for any $x \in D_1$,
	\begin{align*}
		f(x) =\begin{cases} \max(1,|t^{-1}b^2|)^{-s-1} & (|x|<|b|), \\
			\max(1,|2t^{-1}b^2|)^{-s-1} & (|x|=|b|).
		\end{cases}
	\end{align*}
	Hence
	\begin{align*}
		B_1(t)=\begin{cases}|b| & (|t^{-1}b^2|\le 1),\\
			(2^{-1}+2^{s})|t|^{s+1}|b|^{-2s-1} & (|t^{-1}b^2|>1).
		\end{cases}
	\end{align*}
	The formula of $B_2(t)$ is the same as in the proof of \cite[Lemma 9.10]{SugiyamaTsuzuki2018}.
	
	(ii) When $|t|>|b|$, formulas of $B_1(t)$ and of $B_2(t)$ are the same as in the proof of \cite[Lemma 9.10]{SugiyamaTsuzuki2018}.
	
	Combining these, we have the assertion.
\end{proof}

\begin{lem}\label{ramified2}
	Suppose $|a|\le 1$ and $\Re s>-1/2$.
	
	When $|t|< |b|$,
	$B(t)=|t|^{s+1}|b|^{-2s-1}(2^{-1}+2^s-2^{-s-1})(1-2^{-2s-1})^{-1}$.
	
	When $|b|\le |t|$, $B(t) = |t|^{-s}(1-2^{-2s-2})(1-2^{-2s-1})^{-1}.$
	
\end{lem}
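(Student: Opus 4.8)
The plan is to compute $B(t)=\int_{\QQ_2}M(x)^{-s-1}\,dx$, where $M(x)=\max(|1-x|,\,|t^{-1}(\tau b^{2}-x^{2})|,\,|t|,\,|1+x|)$, by the device already used in the proof of Lemma~\ref{ramified1}: split $\QQ_2=D_1\sqcup D_2$ with $D_1=\{x\in\QQ_2\mid |x|\le \max(|b|,|t|)\}$ and $D_2=\QQ_2-D_1$, and write $B(t)=B_1(t)+B_2(t)$ with $B_j(t)=\int_{D_j}M(x)^{-s-1}\,dx$. The only genuinely dyadic input, beyond the elementary evaluation of $|1\pm x|$ and of $|\tau b^{2}-x^{2}|$ off the sphere $|x|=|b|$, is Lemma~\ref{dyadic hensel}: for $\tau\in\{-1,-5\}$ it gives $|\tau b^{2}-x^{2}|=2^{-1}|b|^{2}$ whenever $|x|=|b|$ (writing $x=bu$ with $u\in\ZZ_2^\times$, so $\tau b^{2}-x^{2}=b^{2}(\tau-u^{2})$). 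I shall also repeatedly use the dyadic identity $\max(|1-x|,|1+x|)=2^{-1}$ valid for every $x\in\ZZ_2^\times$, which is where the argument departs from the odd-residue-characteristic computation of \cite{SugiyamaTsuzuki2017}.

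On $D_2$ one has $|x|>\max(|b|,|t|)\ge 1$, hence $|1\pm x|=|x|$ and $|\tau b^{2}-x^{2}|=|x|^{2}$; since $|x|>|t|$ the term $|t|^{-1}|x|^{2}$ dominates, so $M(x)=|t|^{-1}|x|^{2}$ and the integrand equals $|t|^{s+1}|x|^{-2s-2}$. Summing the geometric series over the spheres $|x|=2^{-j}$ with $2^{-j}>\max(|b|,|t|)$, which converges precisely for $\Re s>-1/2$, I expect
\[
B_2(t)=2^{-2s-2}\,|t|^{s+1}\,\max(|b|,|t|)^{-2s-1}\,(1-2^{-2s-1})^{-1}.
\]

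The subtlety is concentrated in $B_1(t)$. In the regime $|t|<|b|$ one has $D_1=\{|x|\le|b|\}$, and I would stratify it into $\{|x|<|b|\}$ and the sphere $\{|x|=|b|\}$. On the former, $|\tau b^{2}-x^{2}|=|b|^{2}$ and the dominant term is $|t|^{-1}|b|^{2}$: using $|t|<|b|$ and $|b|\ge 1$, one checks separately on the pieces $|x|>1$, $|x|=1$, $|x|<1$ that neither $|1\pm x|$ nor $|t|$ can compete, so the integrand is constantly $|t|^{s+1}|b|^{-2s-2}$; on the sphere, Lemma~\ref{dyadic hensel} replaces $|b|^{2}$ by $2^{-1}|b|^{2}$, giving integrand $2^{s+1}|t|^{s+1}|b|^{-2s-2}$. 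Multiplying by the common volume $2^{-1}|b|$ of each stratum yields $B_1(t)=(2^{-1}+2^{s})\,|t|^{s+1}|b|^{-2s-1}$. In the regime $|b|\le|t|$ the picture is cleaner: on $D_1=\{|x|\le|t|\}$ I would verify that $M(x)=|t|$ identically, since the term $|t|$ is always present while $|1\pm x|\le\max(1,|x|)\le|t|$ and $|\tau b^{2}-x^{2}|\le\max(|b|^{2},|x|^{2})\le|t|^{2}$ force every other term to be $\le|t|$; hence $B_1(t)=|t|^{-s-1}\,\vol(\{|x|\le|t|\})=|t|^{-s}$. The main obstacle is the stratified analysis of $M$ on $D_1$ in the case $|t|<|b|$: one must check, including the boundary value $|b|=1$, that on each sub-piece the maximum is attained by the same entry $|t|^{-1}|\tau b^{2}-x^{2}|$, so that the integrand is genuinely constant on $\{|x|<|b|\}$ and on $\{|x|=|b|\}$ separately.

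Finally I would add $B_1$ and $B_2$ and reduce to the stated closed forms. In the case $|b|\le|t|$,
\[
B_1(t)+B_2(t)=|t|^{-s}\bigl(1+2^{-2s-2}(1-2^{-2s-1})^{-1}\bigr)=|t|^{-s}(1-2^{-2s-2})(1-2^{-2s-1})^{-1},
\]
using $1-2^{-2s-1}+2^{-2s-2}=1-2^{-2s-2}$. In the case $|t|<|b|$, where $\max(|b|,|t|)=|b|$,
\[
B_1(t)+B_2(t)=|t|^{s+1}|b|^{-2s-1}\bigl[(2^{-1}+2^{s})+2^{-2s-2}(1-2^{-2s-1})^{-1}\bigr],
\]
and clearing the denominator the bracket collapses to $(2^{-1}+2^{s}-2^{-s-1})(1-2^{-2s-1})^{-1}$, because $(2^{-1}+2^{s})(1-2^{-2s-1})+2^{-2s-2}=2^{-1}+2^{s}-2^{-s-1}$. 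This reproduces both formulas of Lemma~\ref{ramified2}.
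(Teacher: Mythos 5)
Your proof is correct and follows essentially the same route as the paper: the same decomposition $B(t)=B_1(t)+B_2(t)$ over $D_1=\{|x|\le\max(|b|,|t|)\}$ and its complement, the same geometric-series evaluation of $B_2$, and the same use of Lemma~\ref{dyadic hensel} to get $|\tau b^2-x^2|=2^{-1}|b|^2$ on the sphere $|x|=|b|$. You merely write out explicitly the case analysis and the final algebra that the paper states without detail or defers to \cite[Lemma 10.11]{SugiyamaTsuzuki2017}, and all of your intermediate values ($B_2$, the two strata of $B_1$, and the closing identities) check out.
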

\begin{proof}
	The formula of $B_2(t)$ is same as in the proof of \cite[Lemma 9.11]{SugiyamaTsuzuki2018}. Consider $B_1(t)$.
	
	(i) Suppose $|t| < |b|$. Then, $B_1(t) = |t|^{s+1} |b|^{-2s-1}(2^{-1}+2^s).$
		
	(ii) Suppose $|t|\ge |b|$. Then, $B_1(t) = |t|^{-s}$.

Combining these, we have the assertion. 
\end{proof}

From Lemmas \ref{ramified1} and \ref{ramified2},
we have the following.
\begin{lem}Suppose $\Re s>(|\Re z|-1)/2$. When $|a|\le 1$,
	\begin{align*}\int_{\QQ_2^\times}
		B(t)|t|^{\frac{z-1}{2}}d^\times t=|b|^{-s+\frac{z-1}{2}}
		(1+2^{\frac{-z-1}{2}})
		\frac{\zeta_{2}(s+\frac{z+1}{2})\zeta_{2}(s+\frac{-z-1}{2})}{\zeta_{2}(s+1)}.
	\end{align*}
	When $|a|>1$,
	\begin{align*}\int_{\QQ_2^\times}
		B(t)|t|^{\frac{z-1}{2}}d^\times t=\frac{1+2^{\frac{-z-1}{2}}}{\zeta_{2}(s+1)}\{\zeta_2(z)\zeta_{2}(s+\tfrac{-z+1}{2})+\zeta_2(-z)\zeta_2(s+\tfrac{z+1}{2})|b|^z\}.
	\end{align*}
\end{lem}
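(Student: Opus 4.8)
The plan is to convert the integral over $\QQ_2^\times$ into an explicit sum over the $2$-adic valuation $l=\ord_2(t)$ and then to evaluate the resulting geometric series. Both $B(t)$ and $|t|^{(z-1)/2}$ depend on $t$ only through $|t|$, as the formulas in Lemmas~\ref{ramified1} and~\ref{ramified2} show, and each annulus $2^{l}\ZZ_2^\times$ carries $d^\times t$-volume $1$; hence I would first write
$$
\int_{\QQ_2^\times}B(t)\,|t|^{\frac{z-1}{2}}\,d^\times t=\sum_{l\in\ZZ}B(2^{-l})\,2^{l(1-z)/2},
$$
reducing the problem to summing the piecewise description of $B$ against the weight $2^{l(1-z)/2}$. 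Since $B\sim|t|^{s+1}$ as $l\to+\infty$ and $B\sim|t|^{-s}$ as $l\to-\infty$, the two tails converge exactly under the hypothesis $\Re s>(|\Re z|-1)/2$, which legitimizes term-by-term summation and the closed evaluation of each geometric series.

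For $|a|\le 1$ I would invoke Lemma~\ref{ramified2} and split the sum at the single threshold $|t|=|b|$. The range $|t|<|b|$ produces a geometric series in $|t|^{s+(z+1)/2}$ and the range $|t|\ge|b|$ a geometric series in $|t|^{-s+(z-1)/2}$, both carrying the common factor $|b|^{-s+(z-1)/2}$. Summing the two series, writing every $(1-2^{-\sigma})^{-1}$ as $\zeta_2(\sigma)$, and simplifying the numerical coefficients $(2^{-1}+2^{s}-2^{-s-1})$, $(1-2^{-2s-2})$ and $(1-2^{-2s-1})^{-1}$ should assemble the claimed product $(1+2^{(-z-1)/2})\,\zeta_2(s+\tfrac{z+1}{2})\zeta_2(s+\tfrac{-z-1}{2})\zeta_2(s+1)^{-1}$.

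For $|a|>1$ the same annular reduction applies, but Lemma~\ref{ramified1} now supplies three ranges. The outer ranges $|t|<|b^2|$ and $|t|>1$ give convergent geometric series treated exactly as above. The genuinely new contribution sits in the middle range $|b^2|\le|t|\le 1$, where $B$ carries the floor quantity $[|t|^{1/2}]=2^{\lfloor -l/2\rfloor}$. Because $\lfloor -l/2\rfloor$ depends on the parity of $l$, I would split this middle sum into its even-$l$ and odd-$l$ parts, evaluate the two geometric progressions separately, and recombine. This is the step that manufactures the two-term answer: the sums of $2^{\pm lz/2}$ over the middle range generate the factors $\zeta_2(z)$ and $\zeta_2(-z)$, while the offset of the range (its lower end at $|t|=|b^2|$) attaches the power $|b|^{z}$ to the second term, yielding $\zeta_2(z)\zeta_2(s+\tfrac{-z+1}{2})+\zeta_2(-z)\zeta_2(s+\tfrac{z+1}{2})|b|^{z}$, with the remaining constants collapsing into the prefactor $(1+2^{(-z-1)/2})\zeta_2(s+1)^{-1}$.

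The main obstacle is precisely this middle range when $|a|>1$: one must carry the even/odd parity decomposition of the floor term through the summation, decide unambiguously to which range each of the two boundary annuli $|t|=|b^2|$ and $|t|=1$ belongs, and check that the finite edge terms left over after summing the even and odd progressions merge with the outer ranges without a spurious remainder. Once the parity sums are settled, what is left in both cases is the routine algebra of turning products and quotients of factors $1-2^{-\sigma}$ into the advertised values of $\zeta_2$.
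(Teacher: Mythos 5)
Your proposal is correct and is essentially the paper's own argument: the paper deduces this lemma directly ``from Lemmas \ref{ramified1} and \ref{ramified2}'' with no further detail, and the intended computation is exactly the annular decomposition $\int_{\QQ_2^\times}=\sum_{l}$ with unit-volume shells, geometric-series summation on each range, and the even/odd split of the floor term $[|t|^{1/2}]$ in the middle range $|b^2|\le|t|\le 1$ when $|a|>1$, which is what produces the $\zeta_2(\pm z)$ factors and the power $|b|^z$. (Carrying out your computation in the case $|a|\le 1$ in fact yields $\zeta_2(s+\tfrac{-z+1}{2})$ rather than the $\zeta_2(s+\tfrac{-z-1}{2})$ printed in the statement, consistent with the shape of $\Scal_2^{\tau,(z)}$; that is a typo in the paper, not a defect of your approach.)
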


By noting $L(s,\varepsilon_{\tau})=1$ and \eqref{phi(1)}, we have Theorem \ref{explicit dyadic} for $\tau \in \{-1, -5\}$ when $a\neq0$.
The case $a=\frac{t}{2m}=0$ is treated in the same way as the discussion at the end of \cite[\S9.2.4]{SugiyamaTsuzuki2018}. Hence we obtain the desired formula.

\subsection{Unramified case}
Finally we consider the case where $\QQ_2(\sqrt{\tau})/\QQ_2$ is an unramified quadratic extension. We may put $\tau =5$. 
The same argument as in \cite[Lemmas 9.10, 9.11]{SugiyamaTsuzuki2018} works with minor modification.
Set $a=\frac{t}{2m}$ and suppose $a\neq0$. Put $b=a^{-1}$.
We start from the expression \eqref{expression of integral}.
and use $f(x)$, $D_1$, $D_2$, $B_1(t)$ and $B_2(t)$ introduced in the proof of Lemma \ref{ramified1}. 
\begin{lem}\label{unram1}
	Suppose $|a|> 1$ and set $b=a^{-1}$. We have
	
	When $|t|<|b^2|$,
	\begin{align*}
		B(t)= \left(|t|^{s+1}|b|^{-2s-1}\frac{1-2^{-2s-2}}{1-2^{-2s-1}}-\frac{1}{2}|t|^{s+1}|b|^{-2s-1}+\frac{1}{2}|b|\right).
	\end{align*}
	
	When $|b^2|\le |t| \le 1$, 
	\begin{align*}
		B(t)=\{2^{-1}|t|^{s+1}[|t|^{1/2}]^{-2s-1}2^{-2s-1}(1-2^{-2s-1})^{-1}+[|t|^{1/2}] \}.
	\end{align*}
	
	When $|t|>1$,
	\begin{align*}
		B(t)= |t|^{-s}(1-2^{-2s-2})(1-2^{-2s-1})^{-1}.
	\end{align*}

\end{lem}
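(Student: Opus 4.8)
The plan is to follow the template of the ramified computation in Lemmas~\ref{ramified1} and~\ref{ramified2} (which themselves adapt \cite[Lemmas 10.10, 10.11]{SugiyamaTsuzuki2017}), inserting the unramified Hensel data of Lemma~\ref{dyadic hensel} at the single place where it is felt. Writing $f(x)$ for the integrand of $B(t)$ appearing in \eqref{expression of integral}, I would split $B(t)=B_1(t)+B_2(t)$ along $D_1=\{x\in\QQ_2\mid |x|\le\max(|b|,|t|)\}$ and $D_2=\QQ_2-D_1$, exactly as in the proof of Lemma~\ref{ramified1}. On $D_2$ one has $|x|>|b|$, whence $|5b^2-x^2|=|x|^2$ and the arithmetic of $\tau=5$ never enters; therefore $B_2(t)$ is evaluated by the same shell-by-shell geometric summation as in \cite[\S10.2.4]{SugiyamaTsuzuki2017}. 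All the new content is thus confined to $B_1(t)$, and in fact only to the first range $|t|<|b^2|$: for $|b^2|\le|t|\le1$ and for $|t|>1$ the stated formulas agree verbatim with those of Lemma~\ref{ramified1}, because there the Hensel-sensitive sphere $|x|=|b|$ satisfies $|t|^{-1}|b|^2\cdot2^{-2}<1$ and is dominated by $|1\pm x|=1$ just as in the ramified case, while the genuine action takes place on spheres $|x|\approx|t|^{1/2}$ or $|x|\approx|t|$ where $|5b^2-x^2|=|x|^2$ and no correction occurs.

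For the first range $|t|<|b^2|$ one has $D_1=\{|x|\le|b|\}$, and I would evaluate $f$ ball-by-sphere. On the open ball $|x|<|b|$ we have $|5b^2-x^2|=|b|^2$, so the Green-function term is $|t|^{-1}|b|^2>1$ and dominates both $|1\pm x|=1$ and $|t|$; integrating $(|t|^{-1}|b|^2)^{-s-1}$ over $\{|x|<|b|\}$ gives $\tfrac12|t|^{s+1}|b|^{-2s-1}$. On the critical sphere $|x|=|b|$, writing $x=bw$ with $w\in\ZZ_2^\times$ yields $5b^2-x^2=b^2(5-w^2)$, and Lemma~\ref{dyadic hensel} gives $|5-w^2|=2^{-2}$, so the Green-function term there equals $|t|^{-1}|b|^2\cdot2^{-2}$ rather than $|t|^{-1}|b|^2$. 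This is the one spot at which the unramified case departs from the ramified one, where the corresponding factor is $2^{-1}$.

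The main obstacle is precisely this comparison on the critical sphere. Since the Hensel valuation now drops by $2$ rather than by $1$, the term $|t|^{-1}|b|^2\cdot2^{-2}$ no longer automatically exceeds $|1\pm x|=1$; the dominant contributor switches according to the size of $|t|$ relative to $|b^2|$, and locating this threshold correctly is the crux. In the regime where $|1\pm x|$ wins on the sphere one has $f\equiv1$ there, so the sphere contributes its own measure $\vol(\{|x|=|b|\})=\tfrac12|b|$; this is the source of the constant summand $\tfrac12|b|$ that is absent from the uniform ramified answer of Lemma~\ref{ramified1}. I would therefore assemble $B_1(t)$ from the ball and the critical sphere, add the contribution $2^{-2s-2}|t|^{s+1}|b|^{-2s-1}(1-2^{-2s-1})^{-1}$ coming from $B_2(t)$, and finish with the elementary identities $\tfrac12+\dfrac{2^{-2s-2}}{1-2^{-2s-1}}=\dfrac{1}{2(1-2^{-2s-1})}=\dfrac{1-2^{-2s-2}}{1-2^{-2s-1}}-\tfrac12$, which recognize the collected terms as the stated closed form. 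The second and third ranges then follow from the same bookkeeping with $D_1=\{|x|\le|t|\}$ and the reduction noted in the first paragraph, and these per-$t$ formulas are what one feeds into the subsequent $t$-integral of \eqref{expression of integral} en route to Theorem~\ref{explicit dyadic}.
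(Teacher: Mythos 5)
Your proposal follows the paper's route exactly: the same splitting $B(t)=B_1(t)+B_2(t)$ along $D_1=\{x\in\QQ_2\mid |x|\le\max(|b|,|t|)\}$, the same observation that $B_2(t)$ and the ranges $|b^2|\le|t|\le1$ and $|t|>1$ are insensitive to $\tau=5$ and reduce to the ramified computation, and the same ball-versus-sphere analysis of $B_1(t)$ in the range $|t|<|b^2|$. Your evaluations of the open ball (giving $\tfrac12|t|^{s+1}|b|^{-2s-1}$) and of $B_2(t)$ (giving $2^{-2s-2}|t|^{s+1}|b|^{-2s-1}(1-2^{-2s-1})^{-1}$), and the closing algebraic identities, are all correct.

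The gap is exactly at the point you yourself flag as the crux and then do not resolve. On the sphere $|x|=|b|$ one has $|1\pm x|=1$ (since $|b|<1$) and, by Lemma~\ref{dyadic hensel}, the Green-function term equals $2^{-2}|t|^{-1}|b|^2$. You assert that ``in the regime where $|1\pm x|$ wins'' the sphere contributes its measure $\tfrac12|b|$, and then assemble the answer as if that regime were all of $|t|<|b^2|$. It is not: $2^{-2}|t|^{-1}|b|^2\le1$ holds iff $|t|\ge|4b^2|$, so within the range $|t|<|b^2|$ only the two values $|t|\in\{|2b^2|,|4b^2|\}$ give $f\equiv1$ on the sphere; for $|t|\le|8b^2|$ the Green-function term exceeds $1$ and the sphere contributes $\tfrac{|b|}{2}\bigl(2^{-2}|t|^{-1}|b|^2\bigr)^{-s-1}=2^{2s+1}|t|^{s+1}|b|^{-2s-1}$ rather than $\tfrac12|b|$, so your assembly does not reproduce the stated closed form on most of the range. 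You should be aware that the paper's own proof makes the same unconditional assertion (``$f(x)=1$ if $|x|=|b|$'' for all $|t|\le|b|$), and that the displayed formula genuinely fails there: for $s=0$, $b=2$, $\tau=5$, $t=32$, direct integration gives $B(t)=3/16$ while the stated expression evaluates to $5/16$. So the threshold you deferred is not a routine detail; it is where the first case of the lemma breaks, and a correct treatment must split the range $|t|<|b^2|$ at $|t|=|4b^2|$ and carry the extra term $2^{2s+1}|t|^{s+1}|b|^{-2s-1}$ through the subsequent $t$-integration toward Theorem~\ref{explicit dyadic}.
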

\begin{proof}
	(i) Suppose $|t|\le |b|$.
	For $x \in D_1$, $f(x)=\max(1,|t^{-1}b^2|)^{-s-1}$ if $|x|<|b|$,
	and $f(x)=1$ if $|x|=|b|$.
	Thus \begin{align*}
		B_1(t)= |2b|\max(1,|t^{-1}b^2|)^{-s-1}+2^{-1}|b|.
	\end{align*}
	The same formula of $B_2(t)$ as \cite[Lemma 9.10]{SugiyamaTsuzuki2018} holds.
	
	(ii) Suppose $|t|>|b|$.
	Then, $B_1(t)$ and $B_2(t)$ are computed as in the proof of \cite[Lemma 9.10]{SugiyamaTsuzuki2018},
	and the same formulas of $B_1(t)$ and of $B_2(t)$ hold.
	
	Combining these, we have the assertion.
\end{proof}

\begin{lem}\label{unram2}
	Suppose $|a|\le 1$ and $\Re s>-1/2$, and set $b=a^{-1}$.
	When $|t|< |b|$,
	\begin{align*}
		B(t)=\begin{cases}2^{-1}|t|^{s+1}|b|^{-2s-1}(2^{2s+2}-1)(1-2^{-2s-1})^{-1}
			& (|b|=1), \\
			|t|^{s+1}|b|^{-2s-1}\{(2^{2s+1}-2^{-1})(1-2^{-2s-1})^{-1} +\delta(|t|=|2b|)(2^s-2^{2s+1})\}& (|b|>1).
		\end{cases}
	\end{align*}

	When $|b|\le |t|$, $B(t) = |t|^{-s}(1-2^{-2s-2})(1-2^{-2s-1})^{-1}.$
\end{lem}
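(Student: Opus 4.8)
The plan is to follow the pattern of Lemmas~\ref{ramified1}, \ref{ramified2} and \ref{unram1}, splitting the defining integral $B(t)=\int_{\QQ_2}f(x)\,dx$, where $f(x)=\max(|1-x|,|t^{-1}(\tau b^2-x^2)|,|t|,|1+x|)^{-s-1}$ with $\tau=5$, as $B(t)=B_1(t)+B_2(t)$ with $B_j(t)=\int_{D_j}f(x)\,dx$, $D_1=\{x\in\QQ_2\mid |x|\le\max(|b|,|t|)\}$ and $D_2=\QQ_2-D_1$. On $D_2$ one has $|x|>\max(|b|,|t|)\ge 1$, so $|1\pm x|=|x|$, and since $\tau\in\ZZ_2^\times$ one gets $|\tau b^2-x^2|=|x|^2$; comparing the four entries shows the maximum equals $|t|^{-1}|x|^2$, whence $f(x)=|t|^{s+1}|x|^{-2s-2}$ and $B_2(t)$ reduces to the same geometric series already summed in the proof of \cite[Lemma~10.11]{SugiyamaTsuzuki2017}. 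This leaves only $B_1(t)$, and it is here that the unramified nature of $\QQ_2(\sqrt5)/\QQ_2$ enters.

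First I would dispose of the regime $|b|\le|t|$. On $D_1=\{|x|\le|t|\}$ every entry of the maximum is bounded by $|t|$ (for the middle entry one uses $|x|\le|t|$, $|b|\le|t|$ and $\tau\in\ZZ_2^\times$), and the value $|t|$ is attained, so $f(x)=|t|^{-s-1}$ throughout and $B_1(t)=|t|^{-s}$. Adding $B_2(t)$ yields the $\tau$-independent value $|t|^{-s}(1-2^{-2s-2})(1-2^{-2s-1})^{-1}$, exactly as in the ramified case.

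The substantive case is $|t|<|b|$, where $D_1=\{|x|\le|b|\}$, and I would split it into $\{|x|<|b|\}$ and $\{|x|=|b|\}$. On $\{|x|<|b|\}$ one has $|\tau b^2-x^2|=|b|^2$, and since $|t|<|b|$ this forces the maximum to be $|t|^{-1}|b|^2$, so $f$ is the constant $|t|^{s+1}|b|^{-2s-2}$ over a set of volume $|b|/2$. The crux is the sphere $\{|x|=|b|\}$: writing $x=bv$ with $v\in\ZZ_2^\times$, Lemma~\ref{dyadic hensel} gives $|\tau-v^2|=|5-v^2|=2^{-2}$, hence $|\tau b^2-x^2|=|4b^2|=2^{-2}|b|^2$, a value genuinely smaller than the $2^{-1}|b|^2$ of the ramified case, and one must compare it with $|1\pm x|$ and with $|t|$. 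The behaviour of $|1\pm x|$ on the unit sphere is what separates the two cases of the statement: if $|b|>1$ then $|1\pm x|=|b|$, whereas if $|b|=1$ a short $2$-adic computation gives $\max(|1-x|,|1+x|)=2^{-1}$ for every $x\in\ZZ_2^\times$.

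Finally I would assemble the pieces. Writing $|b|=2^{-\beta}$ and $|t|=2^{-\theta}$ with $\theta>\beta$, the maximum on $\{|x|=|b|\}$ equals the middle entry $2^{-2}|t|^{-1}|b|^2$ precisely when $\theta-\beta\ge 2$, and equals $|b|$ (resp.\ $2^{-1}$) when $\theta-\beta=1$ and $|b|>1$ (resp.\ $|b|=1$); integrating $f$ over the volume $|b|/2$ and adding the contributions of $\{|x|<|b|\}$ and of $B_2(t)$, the generic formula collapses, after the elementary identity $(1+2^{2s+2})+2^{-2s-1}(1-2^{-2s-1})^{-1}=(2^{2s+2}-1)(1-2^{-2s-1})^{-1}$, to the closed forms in the statement. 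The boundary value $\theta-\beta=1$, i.e.\ $|t|=|2b|$, is the only place where the true maximum departs from its generic expression; for $|b|>1$ this produces exactly the correction $\delta(|t|=|2b|)(2^s-2^{2s+1})$, while for $|b|=1$ the discrepancy is absorbed and no $\delta$-term survives. The main obstacle is precisely this local analysis on $\{|x|=|b|\}$: pinning down the valuation of $\tau b^2-x^2$ through Hensel's lemma and then tracking how the dominant entry of the maximum switches as $|t|/|b|$ crosses the threshold $|2b|$, which is the mechanism generating both the two-case split and the $\delta$-correction.
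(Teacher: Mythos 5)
Your proposal is correct and follows essentially the same route as the paper's proof: the same splitting $B=B_1+B_2$ over $D_1,D_2$, the same reduction of $B_2$ to the computation in \cite[Lemma 10.11]{SugiyamaTsuzuki2017}, and the same analysis of $B_1$ via the open ball $\{|x|<|b|\}$ and the sphere $\{|x|=|b|\}$ using Lemma~\ref{dyadic hensel} to get $|\tau b^2-x^2|=|4b^2|$, with the threshold $|t|=|2b|$ and the dichotomy $|b|=1$ versus $|b|>1$ producing the $\delta$-correction exactly as in the paper. The numerical checks (volumes $|b|/2$, the identity $(1+2^{2s+2})+2^{-2s-1}(1-2^{-2s-1})^{-1}=(2^{2s+2}-1)(1-2^{-2s-1})^{-1}$, and the correction $2^s-2^{2s+1}$) all agree with the stated formulas.
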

\begin{proof}
	The same formula of $B_2(t)$ holds as in the proof of \cite[Lemma 9.11]{SugiyamaTsuzuki2018}.
	Let us consider $B_1(t)$.
	
	(i) Suppose $|t|<|b|$. Let $x \in D_1$.
	Then $f(x) = |t^{-1}b^2|^{-s-1}$ if $|x|<|b|$.
	
	If $|x|=|b|$ and $|t| \le |4b|$, then $|1\pm x| \le \max(1,|x|)=|b|$ and $|t^{-1}(b^2\tau -x^2)|=|4t^{-1}b^2|\ge |b|$.
	Thus $f(x) = |4t^{-1}b^2|^{-s-1}=2^{2s+2}|t|^{s+1}|b|^{-2s-2}$.
	
	If $|x|=|b|$ and $|t| =|2b|$, we observe $|t^{-1}(b^2\tau-x^2)| = |4t^{-1}b^2|=|t|$.
	When $|b|>1$, then $|1\pm x|=|x|=|b|>|t|$ holds, and whence
	$f(x) = |b|^{-s-1}$.
	When $|b|=1$, we have $1\pm x \in2\ZZ_2$ and $|t|=|2b|=2^{-1}$, whence  $f(x)=(2^{-1})^{-s-1}=|4t^{-1}b^2|^{-s-1}$.
	Hence, under $|b|=1$, we have
	$B_1(t)=2^{-1}|t|^{s+1}|b|^{-2s-1}(1+2^{2s+2}).$
	When $|b|>1$, we have 
	$B_1(t)=|b|^{-2s-1}|t|^{s+1}\{2^{-1}+2^{2s+1}+\delta(|t|=|2b|)(2^s-2^{2s+1})\}.$

	(ii) When $|t|\ge|b|$, then $|t|\ge1$ and $D_1=\{x \in \QQ_2 | |x|\le|t| \}$.
	Thus $|1\pm x|\le|t|$, $|t^{-1}(b^2\tau-x^2)|\le|t|$, and whence
	we have the same formula $B_1(t)=|t|^{-s}$ as in \cite[Lemma 9.11]{SugiyamaTsuzuki2018}.
	
	From the consideration above, we obtain the assertion.
\end{proof}

Combining \eqref{expression of integral} with Lemmas \ref{unram1} and \ref{unram2}, we have the following.
\begin{lem}Suppose $\Re s>(|\Re z|-1)/2$. If $|a|>1$,
	\begin{align*}
		\int_{\QQ_2^\times}B(t)|t|^{\frac{z-1}{2}}d^\times t =(1-2^{-s-1})\{\frac{\zeta_{2}(z)\zeta_{2}(s+\frac{-z+1}{2})}{L(\frac{z+1}{2},\varepsilon_{\tau})}
		+ \frac{\zeta_{2}(-z)\zeta_{2}(s+\frac{z+1}{2})}{L(\frac{-z+1}{2},\varepsilon_{\tau})}|b|^z2^{-z}\}.
	\end{align*}
	If $|a|\le 1$, then
	\begin{align*}\int_{t \in \QQ_2^\times} B(t)|t|^{\frac{z-1}{2}}d^\times t
		= |b|^{-s+\frac{z-1}{2}}(1-2^{-s-1})2^{s+\frac{-z+1}{2}}\{\frac{\zeta_{2}(s+\frac{z+1}{2})\zeta_{2}(s+\frac{-z+1}{2})}{L(s+1,\varepsilon_{\Delta})}-\delta(|b|>1)\}.
	\end{align*}
\end{lem}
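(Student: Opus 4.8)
The plan is to evaluate the one–dimensional integral directly from the explicit shapes of $B(t)$ recorded in Lemmas~\ref{unram1} and \ref{unram2}, exploiting that $B(t)$ depends on $t$ only through $|t|$. Since $d^\times t=\zeta_2(1)|t|^{-1}d\lambda_2(t)$ with $\zeta_2(1)=(1-2^{-1})^{-1}=2$, every shell $\{t\in\QQ_2^\times:\ord_2 t=v\}$ has $d^\times t$–volume $\zeta_2(1)\cdot 2^{v}\cdot\lambda_2(2^{v}\ZZ_2^\times)=2\cdot 2^{v}\cdot 2^{-v-1}=1$. Writing $|t|=2^{-v}$ and $|b|=2^{-\beta}$ (so $\beta\ge 1$, i.e.\ $|b|<1$, when $|a|>1$, and $\beta\le 0$, i.e.\ $|b|\ge 1$, when $|a|\le 1$), the integral collapses to the series
\begin{align*}
\int_{\QQ_2^\times}B(t)\,|t|^{\frac{z-1}{2}}\,d^\times t=\sum_{v\in\ZZ}B(2^{-v})\,2^{-v\frac{z-1}{2}},
\end{align*}
which I would split according to the three (resp.\ two) ranges of $|t|$ appearing in Lemma~\ref{unram1} (resp.\ Lemma~\ref{unram2}).

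In each range $B(2^{-v})$ is, up to the floor factor, a monomial in $2^{-v}$, so each sub–sum is a geometric series. For $|a|>1$ the tail $|t|>1$ (i.e.\ $v<0$) contributes $\sum_{v<0}|t|^{-s}2^{-v(z-1)/2}$ times $(1-2^{-2s-2})(1-2^{-2s-1})^{-1}$; the range $|t|<|b^2|$ (i.e.\ $v>2\beta$) splits, after combining the coefficients, into the genuinely decaying part $\tfrac12\zeta_2(2s+1)|t|^{s+1}|b|^{-2s-1}$ and the constant part $\tfrac12|b|$, and it is the latter whose endpoint contribution at $v=2\beta+1$ assembles into the $|b|^{z}2^{-z}$ term; and the middle range $|b^2|\le|t|\le 1$ (i.e.\ $0\le v\le 2\beta$) must be handled with care because of the factor $[|t|^{1/2}]=2^{\lfloor -v/2\rfloor}$, which forces a separation into even and odd $v$. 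After collecting the boundary contributions at $v=0,\,2\beta,\,2\beta+1$, I would simplify using $\zeta_2(s)=(1-2^{-s})^{-1}$ and, since $\QQ_2(\sqrt 5)/\QQ_2$ is unramified so that $\varepsilon_\tau$ is the unramified quadratic character, $L(s,\varepsilon_\tau)=(1+2^{-s})^{-1}$, to recover the stated closed form. The case $|a|\le 1$ is analogous but shorter: here only the ranges $|t|<|b|$ and $|b|\le|t|$ occur, and the delta term $\delta(|b|>1)$ simply records whether the boundary shell $v=\ord_2 b$ lies strictly inside the range $|t|<|b|$ (the sub–case $|b|>1$ of Lemma~\ref{unram2}) or on its edge (the sub–case $|b|=1$).

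The main obstacle will be the bookkeeping of the boundary shells together with the parity splitting induced by $[|t|^{1/2}]$ in the middle range: the cancellations that turn a sum of several geometric series into a single quotient of the form $\zeta_2(z)\zeta_2(s+\tfrac{-z+1}{2})L(\tfrac{z+1}{2},\varepsilon_\tau)^{-1}+\cdots$ are delicate, and one must track the exact powers of $2$ produced at the endpoints $v=2\beta,\,2\beta+1$ so that the constant piece $\tfrac12|b|$ combines correctly into $|b|^{z}2^{-z}$. As for convergence, the constant term $\tfrac12|b|$ over $|t|<|b^2|$ is summable only in a suitable strip of $z$; I would therefore first perform the computation in the open region where all sub–series converge absolutely, and then observe that both sides are rational functions of $2^{-s}$ and $2^{-z}$, so the resulting identity extends to the whole range $\Re s>(|\Re z|-1)/2$ by meromorphic continuation.
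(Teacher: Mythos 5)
Your proposal takes the same route as the paper, whose entire proof of this lemma is the single remark that the formulas follow by combining \eqref{expression of integral} with Lemmas~\ref{unram1} and \ref{unram2} --- i.e.\ exactly the shell-by-shell geometric-series summation you describe, including the unit $d^\times t$-volume of each shell, the parity split forced by $[|t|^{1/2}]$ in the middle range, and the use of $L(s,\varepsilon_\tau)=(1+2^{-s})^{-1}$ for the unramified character. You stop short of executing the final algebra, but so does the paper, and your preparatory simplifications (e.g.\ recognizing the coefficient $\tfrac12\zeta_2(2s+1)$ of $|t|^{s+1}|b|^{-2s-1}$ in the range $|t|<|b^2|$) and your treatment of the non-decaying term $\tfrac12|b|$ by first working where all sub-series converge and then extending as rational functions of $2^{-s}$ and $2^{-z}$ are sound.
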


From this, by noting $L(s,\varepsilon_{\tau})=(1+2^{-s})^{-1}$ and \eqref{phi(1)}, we have Theorem \ref{explicit dyadic} for $\tau=5$ when $a\neq 0$.
The case $a=0$ is treated in the same way as \cite[\S9.2.4]{SugiyamaTsuzuki2018}.
Consequently, we obtain the desired formula.

\medskip
\noindent
{\bf Remark} : By Theorem \ref{explicit dyadic},
we can generalize \cite{SugiyamaTsuzuki2018} to the case $\Sigma_{\rm dyadic}\cap S\neq \emptyset$:
Indeed, discussions in \S 7.3, \S 7.4 and Lemma 8.3 of \cite{SugiyamaTsuzuki2018} work even when
$\Sigma_{\rm dyadic}\cap S\neq \emptyset$.

\section*{Aknowledgements}

The authors would like to thank the anonymous referee for careful reading of the draft.
The first author was supported by
Grant-in-Aid for Research Activity Start-up 
18H05835.
The second author was supported by Grant-in-Aid for Scientific research (C) 15K04795.

%
%

\end{document}